\newtheorem*{rep@theorem}{\rep@title}
\newcommand{\newreptheorem}[2]{%
\newenvironment{rep#1}[1]{%
 \def\rep@title{#2 \ref{##1}}%
 \begin{rep@theorem}}%
 {\end{rep@theorem}}}
\newtheorem{theorem}{Theorem}[section]
\newtheorem{corollary}[theorem]{Corollary}
\newtheorem{lemma}[theorem]{Lemma}
\newtheorem{proposition}[theorem]{Proposition}  
\theoremstyle{definition}
\newtheorem{definition}[theorem]{Definition}
\newtheorem{examples}[theorem]{Examples}
\newcommand{\Z}{\ensuremath{{\mathbb{Z}}}}
\newcommand{\R}{\ensuremath{{\mathbb{R}}}}
\newcommand{\E}{\ensuremath{{\mathbb{E}}}}
\newcommand{\bbP}{\ensuremath{{\mathbb{P}}}}
\newcommand{\G}{\Gamma}
\newcommand{\AG}{A_\G}            
\newcommand{\m}{\text{Min}}
\title{Compactifying the Space of Length Functions of a Right-angled Artin Group}
\author{Anna Vijayan}
\begin{document}

\maketitle

\begin{abstract}
\noindent
Culler and Morgan proved that the length function of a minimal action of a group on a tree completely determines the action. As a consequence the space of minimal actions of $F_n$ on trees, up to scaling (also known as Outer Space), embeds in  $\bbP^\infty$ via the map sending an action to its projectivized length function.  They also proved that the image of this map has compact closure.

For a right-angled Artin group $\AG$ whose defining graph is connected and triangle-free, we investigate the space of minimal actions of $\AG$ on 2-dimensional CAT(0) rectangle complexes. Charney and Margolis showed that such actions are completely determined by their length functions; hence this space embeds into $\bbP^\infty$. Here it is shown that the set of projectivized length functions associated to these actions has compact closure.  
\end{abstract}

\section{Introduction}

By definition, right-angled Artin groups (RAAGs) are groups that admit a presentation with a finite number of generators and with all relations commutator relations between pairs of generators. RAAGs are often specified using a finite simplicial graph $\G$, called the defining graph, which determines the presentation. 
Let $\G$ be a finite simplicial graph (recall that a simplicial graph has no loops or bigons). Then the RAAG $A_\G$ is the group whose generators are the vertices of $\G$ and whose relations are the commutator relations between pairs of adjacent vertices. 

Droms \cite{Dr} showed that any two graphs that define isomorphic RAAGs must be isomorphic as graphs; thus there is a one-to-one correspondence between isomorphism classes of RAAGs and isomorphism classes of finite simplicial graphs.

\begin{examples} 
If $\G$ is a discrete graph on $n$ vertices, that is, $\G$ has no edges, then none of the generators commute, so $\AG$ is a free group $F_n$. 

If $\G$ is a complete graph on $n$ vertices, then all generators commute, so $\AG$ is a free abelian group $\Z^n$. 

If $\G$ is a join of $k$ subgraphs $\G_1, \dots, \G_k$ with vertex sets $V_1, \dots, V_k$, then all vertices of $V_i$ commute with all vertices of $V_j$ (for all $i, j$) , so $\AG \cong A_{\G_1} \times \dots \times A_{\G_k}$. In particular, if $\G$ is the graph shown in Figure~\ref{fig:1}, 
\begin{figure}[htbp]
  \centering
  \includegraphics[width=3in]{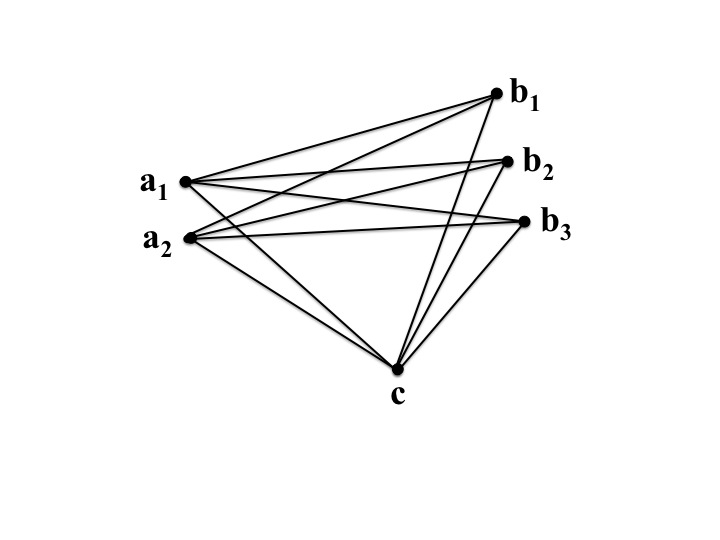}
  \caption{A defining graph $\G$.}
  \label{fig:1}
\end{figure}
then  $\AG=\langle a_1, a_2 \rangle \times \langle b_1, b_2, b_3 \rangle \times \langle c \rangle =F_2 \times F_3 \times \Z$, and 
if $\G$ is a complete bipartite graph $K_{m,n}$, then 
$\AG$ is the direct product of free groups $F_m \times F_n$. 

If $\G$ is a cyclic graph of length $n\geq 5$, then $\AG$ is not a direct product or free product of subgroups. More generally, this is the case for any $\G$ that is connected and does not decompose as a join.
\end{examples}

We have seen that free groups and free abelian groups are extreme examples of RAAGs- one corresponding to a graph with no edges and the other to a graph that includes all possible edges. Clearly many RAAGs lie between the two, since every isomorphism class of graphs defines a different RAAG. We think of RAAGs as interpolating between free groups and free abelian groups.

RAAGs appear as subgroups of other well-known groups. For example, every RAAG embeds as a subgroup of some mapping class group. Right-angled Coxeter groups have presentations like those of RAAGs, except that they include relations giving each generator order two. Davis and Januszkeiwicz \cite{DJ} showed that every RAAG embeds as a finite-index subgroup of some right-angled Coxeter group. Since Coxeter groups are linear, so are RAAGs (linearity of RAAGs was also shown by \cite{HW}.) Thus RAAGs are residually finite, since they are linear and finitely generated. RAAGs are also known to be residually torsion-free nilpotent \cite{DK}; as a consequence they are bi-orderable \cite{DK} and torsion-free. 

RAAGs are known to contain several different classes of groups as subgroups. For example, they contain all of the surface groups (either orientable or not, either closed or with boundary), except for three (\cite{CrW}, extending the work of \cite{DrSS}). They also contain the graph braid groups \cite{CrW}. More recently,  Wise \cite{W} showed that any word-hyperbolic group with a quasiconvex hierarchy has a finite-index subgroup that embeds as a subgroup of a RAAG. 

This last result was instrumental in the proof of Thurston's Virtually Fibered Conjecture for hyperbolic 3-manifolds, which says that every closed hyperbolic 3-manifold has a finite cover that is a surface bundle over the circle. Agol \cite{A} had shown that every right-angled Coxeter group contains a finite index subgroup satisfying the technical property called residually finite rationally solvable (RFRS) \cite{A}; in other words, right-angled Coxeter groups are virtually RFRS. Since the property of being virtually RFRS is inherited by subgroups, and since RAAGs are subgroups of right-angled Coxeter groups,  any subgroup of a RAAG is virtually RFRS; thus Wise's result showed that any word-hyperbolic group with a quasiconvex hierarchy is virtually RFRS. Agol \cite{A} also had shown that any irreducible 3-manifold with RFRS fundamental group is virtually fibered. Wise used this result to show that all closed hyperbolic Haken 3-manifolds are virtually fibered \cite{W, W2}. Finally Agol, Groves, and Manning completed the proof that the Virtually Fibered Conjecture holds for \emph{all} closed hyperbolic 3-manifolds by showing that the virtually Haken conjecture holds for closed hyperbolic 3-manifolds \cite{AGM}.

In Wise's work, a central role is played by Salvetti complexes. Given a RAAG $\AG$, its Salvetti complex is constructed as follows. The 1-skeleton is a rose: it has one vertex and $n$ edges, each of length one, labeled by the generators $v_1, \dots, v_n$ of $\AG$.  For each commutator relation $v_iv_j=v_jv_i$, a unit square 2-cell is attached along the $v_i$ and $v_j$ edges, forming a torus. For every three generators that pairwise commute, a 3-torus is formed by attaching a unit cube along the three 2-tori associated to the three generators. This process is continued, so that an $n$-torus is formed for every $n$ generators that pairwise commute. The resulting finite-dimensional cubed complex $S_\G$ is the Salvetti complex for $\AG$. Its universal cover  $\tilde{S}_\G$  is a CAT(0) cubical complex, and the RAAG $\AG$ acts geometrically on  $\tilde{S}_\G$. My thesis concerns more generally certain kinds of geometric actions of RAAGs on CAT(0) rectangle complexes.

We pause to introduce some terminology for certain subgraphs associated with the vertices of a defining graph. Let $v$ be a vertex of a defining graph. Then the \emph{link}, $lk(v)$, of $v$ is the full subgraph spanned by the vertices adjacent to $v$. The \emph{star}, $st(v)$, of $v$ is the full subgraph spanned by $v$ and the vertices adjacent to $v$.

It is sometimes possible to prove that some property holds for a restricted set of RAAGs, where the restriction is put on the defining graph. A useful idea has been that of a \emph{homogeneous} graph. Homogeneity is defined using induction. We say that a finite simplicial graph is \emph{homogeneous of dimension 1} if it is non-empty and discrete (contains no edges). Furthermore, we say that a finite simplicial graph is \emph{homogeneous of dimension $n>1$} if it is connected and if the link of every vertex is homogeneous of dimension $n-1$. Note that a graph $\Gamma$ is homogeneous of dimension 2 if and only if it is connected and triangle-free; this condition implies that the Salvetti complex $S_\Gamma$ is 2-dimensional.

The outer automorphism groups of RAAGs are objects of considerable interest. If the automorphism group of a RAAG $\AG$ is denoted by $Aut(\AG)$, and the subgroup formed by the conjugations by $Inn(\AG)$, then the outer automorphism group is the group $Out(\AG):=Aut(\AG) / Inn(\AG)$. Just as RAAGs interpolate between free groups and free abelian groups, their outer automorphism groups interpolate between the outer automorphism group of a free group, $Out(F_n)$, and $GL_n(\Z)$. Several properties shared by both  $Out(F_n)$ and $GL_n(\Z)$ have also been shown to hold for the outer automorphism group of an arbitrary RAAG. For example, Charney and Vogtmann showed in \cite{ChV1} that for any RAAG $\AG$, $Out(\AG)$ is virtually torsion-free and has finite virtual cohomological dimension, and they showed  in \cite{ChV2} that $Out(\AG)$ is residually finite; this last result was also proved by different methods by Minasyan \cite{Minasyan}. Also Charney and Vogtmann showed in \cite{ChV2} that for $\G$ a homogeneous defining graph, $Out(\AG)$ satisfes the Tits' alternative: every subgroup of $Out(\AG)$ is either virtually solvable or contains a non-abelian free group.

Many results concerning $Out(F_n)$ were obtained through the study of Outer Space, a contractible space on which $Out(F_n)$ acts with finite point stabilizers.

Outer Space, which we denote by $O(F_n)$, was introduced by Culler and Vogtmann \cite{CV}. The points of $O(F_n)$ are given by actions of $F_n$ on trees. The trees are simplicial $\R$-trees, and can be described as trees that have been equipped with a length metric by assigning one of finitely many lengths to each edge. The actions are simplicial, free, minimal actions by isometries, where a minimal action is one under which there is no invariant subtree.  

For each action $\alpha: F_n \circlearrowleft T$, there is an associated translation length function $l^\alpha$. This is the function $l^\alpha:F_n \rightarrow \R$  defined by $l^\alpha(g)=\inf_{x\in T}\{d(x,g.x)\}$; the element $g\in F_n$ is sent to the infimum of distances moved by points of $T$ under the action of $g$. The assumptions ensure that the infimum is realized as a minimum.

The function $l^\alpha$ is constant on conjugacy classes. Let $\Omega$ be the set of conjugacy classes of $F_n$ and consider the map $l^\alpha: \Omega \rightarrow \R$. Note that $\Omega$ is countable, since $F_n$ is countable. One considers $l^\alpha$ to be an element of 
$\R^{\Omega}=$
$\R^\infty$; then up to scaling, we get an element of $\bbP^\infty$, 
called a projectivized length function. 

Thus there is a map from the set of actions $\alpha$ to the space $\bbP^\infty$, defined by sending each action to its corresponding projectivized translation length function. If we consider trees up to scaling of their metrics, then we get the map 
$$\left\{
\begin{array}{c l}   
    \text{minimal actions of }F_n\\
    \text{on trees, up to scaling}
\end{array}\right\}\longrightarrow \bbP^\infty.$$

Culler and Morgan \cite{CM} proved two important theorems concerning the relationship between actions on trees and their associated  length functions. The first theorem shows that the  length function of a minimal action of a free group on a tree completely determines the action. Thus the map 
$$\left\{
\begin{array}{c l}   
    \text{minimal actions of }F_n\\
    \text{on trees, up to scaling}
\end{array}\right\}\longrightarrow \bbP^\infty$$
is an injection.
In fact, Culler and Vogtmann give $O(F_n)$ the topology it inherits as a subspace of $\bbP^\infty$. It turns out that this subspace topology is the same as the equivariant Gromov-Hausdorff topology of a set of actions on metric spaces (see \cite{Best,Paulin}).

The second theorem says that the image of this map lies within a compact subspace of $\bbP^\infty$ and hence has compact closure in $\bbP^\infty$. In relation to Outer Space, the first theorem shows that Outer Space can be embedded in $\bbP^\infty$, while the second theorem gives a compactification of Outer Space. 

The points of $O(F_n)$ are actions of $F_n$ on trees; this is reasonable since free groups are the fundamental groups of graphs and thus act naturally on their universal covers, which are trees. Homogeneous  RAAGs of dimension 2 are the fundamental groups of cell complexes formed by gluing together 2-dimensional tori. Thus they act naturally on the universal covers of such complexes; these universal covers are 2-dimensional CAT(0) rectangle complexes. 

Recently Charney, Stambaugh, and Vogtmann have constructed analogues of Outer Space for RAAGs. The points of these Outer Spaces are given by minimal actions on CAT(0) rectangle complexes $X$.

We work in the setting of homogeneous RAAGs $\AG$ of dimension 2. In this case, the definition of a minimal action is more technical than that in the free group case. First of all, a minimal action must be simplicial, proper, semisimple, and by isometries, where an action $\alpha$ is \emph{semisimple} if $l^\alpha(g)=\inf_{x\in X}d(x,g.x)$ is realized as a minimum. The {\sl minset} of $g\in \AG$ is $\text{Min}^\alpha(g)= \{x \in X : d(x, g.x)= l^\alpha(g)\}$, and the minset of a subgroup $H\subset \AG$ is $\cap_{h\in H}\m(h)$. Finally, a \emph{minimal} action is a  simplicial, proper, semisimple action by isometries that also has the property that $X$ is covered by minsets of $\Z^2$-subgroups.

A theorem of Charney and Margolis \cite{ChM}, analogous to the first theorem of Culler and Morgan, says that the length function of a minimal action completely determines the action. Thus, analogous to the free group case, 
\[O(\AG):=\left\{
\begin{array}{c l}   
    \text{minimal actions of }\AG \text{ on 2-}\\
    \text{dimensional CAT(0) rectangle}\\
    \text{complexes, up to scaling}
\end{array}\right\}\longrightarrow \bbP^\infty\] is an injection and $O(\AG)$ can be embedded in $\bbP^\infty$. 
In fact, a topology can be put on $O(\AG)$ via this map. 

To get a compactification of $O(\AG)$, one need only show that the image of this map has compact closure. Our main theorem does exactly this; it is an analogue of Culler and Morgan's second theorem.

\begin{reptheorem}{main}  Let $\AG$ be a homogeneous RAAG of dimension 2. Let $O(\AG)$ be the set of minimal actions of $\AG$ on 2-dimensional CAT(0) rectangle complexes, up to scaling. Under the map that sends an action to its projectivized translation length function, the image of $O(\AG)$ lies in a compact subspace of $\bbP^\infty$.
\end{reptheorem}

The proof of Culler and Morgan's second theorem depended on a key proposition. The analogue of that proposition is our Proposition~\ref{key}. Define the cyclically reduced word length $\|g\|$ of an element $g\in \AG$ to be the length of the shortest word in the generators of $\AG$ or their inverses that represents an element conjugate to $g$.

\begin{repproposition}{key} Let $\G$ be a connected, triangle-free graph, and let $\AG$ be the associated RAAG. There exists a finite subset $D\subset \AG$ and a constant $C$ such that for any minimal action $\alpha$ of $\AG$ on a 2-dimensional CAT(0) rectangle complex, and for any $g\in \AG$, $l^\alpha(g) \leq CM^\alpha \| g\|$, where $M^\alpha=\displaystyle\max_{d\in D}l^\alpha(d)$.
\end{repproposition}

In fact, our Theorem~\ref{main} follows from Proposition~\ref{key} in essentially the same way that Culler and Morgan's theorem did.

However, our proof of  Proposition~\ref{key} differs completely from that of Culler and Morgan's. 
Their proof was by induction on $\|g\|$ and used the following property satisfied by the length function of any isometric action $\alpha$ of a group $G$ on a tree: for all $g,h \in G$, either $l^\alpha(gh)=l^\alpha(gh^{-1})$ or $\max(l^\alpha(gh), l^\alpha(gh^{-1})) \leq l^\alpha(g) +l^\alpha(h)$. 
This property is not satisfied by our actions, so we were forced to take a different tack. 

Our proof depends on the geometry of CAT(0) rectangle complexes.

In particular, it depends on understanding the relationship between translation lengths and distances between minsets. More specifically, it depends on being able, for a given action, to bound the distance between two minsets of generators by the translation length of a product of the generators. 
In fact, we work with $d_1$ distances (based on lengths of edge paths) and $l_1$ translation lengths, rather than CAT(0) distances and CAT(0) translation lengths, and develop rules for determining which edge paths can be used to measure $l_1$ translation lengths.

The proof also depends on being able, for a given action, to find a base point whose distance from all minsets of generators is bounded by a function of the distances between the minsets. Here we again work with $d_1$ distances and we make use of some CAT(0) geometry, for example, the fact that each minset is isometric to a $\text{tree}\times \R$, a consequence of Theorem II.6.8(4) in \cite{BH}.

These geometric arguments form the core of the proof of Proposition~\ref{key} and thus of Theorem~\ref{main}.


\section{Preliminaries}

In this thesis, we work with CAT(0) rectangle complexes. Before introducing these, we first review the definition of CAT(0) spaces and give a few useful theorems concerning them. 

Given a metric space, a \emph{geodesic} is an isometric embedding of a line segment (or line) into the space. Since such a path has length the distance between the points, a geodesic is also a path of minimal length between its endpoints. 
A \emph{geodesic metric space} is a metric space for which any two points of the space are joined by a geodesic. 
Note that a geodesic metric space is necessarily a \emph{length space}: a metric space for which the distance between any two points is the infimum of the lengths of the paths that join them. 

Now let $X$ be a geodesic metric space; we will say what it means for $X$ to be CAT(0). 
First note that a triangle of $X$ is specified by three points along with three geodesics joining the three pairs of points. Given any triangle in $X$, we can form a comparison triangle in Euclidean 2-space whose side lengths are the same as those of the three geodesic segments of $X$, and for each point of the triangle, there is a corresponding point of the comparison triangle, specified by its distance from the endpoints of the geodesic segment on which it lies.  Informally, $X$ is CAT(0) if triangles in $X$ are at least as thin as their comparison triangles. Formally, $X$ is CAT(0) if for any two points of any triangle of $X$, the distance between them is less than or equal to the distance (in Euclidean 2-space) between the corresponding two points of the comparison triangle.

A space that is locally CAT(0) is said to be non-positively curved. 
Note that hyperbolic space and Euclidean space are both CAT(0), and in fact any manifold of non-positive sectional curvature will be non-positively curved in the CAT(0) sense. 
Thus the CAT(0) condition applies to a larger class of spaces; still many results can be derived from the assumption of CAT(0). 

For example, CAT(0) spaces are uniquely geodesic, that is, given any two points, there is a \emph{unique} geodesic segment joining them. We often denote the unique geodesic that joins two points $a$ and $b$ by $[a,b]$. Furthermore, geodesics vary continuously with their endpoints. From this it follows that every open ball is contractible, and in fact that the CAT(0) space itself is contractible. This property that CAT(0) spaces are contractible is an important one. 
Also, in a CAT(0) space, every local geodesic is a (global) geodesic, and open balls are convex. (See Bridson and Haefliger \cite{BH} for details.) 

We've seen above that every CAT(0) space is non-positively curved, is contractible, thus simply connected, and is a geodesic space, thus a length space. An important theorem is the following.

\begin{theorem}
(Cartan-Hadamard; Theorem II.4.1 in \cite{BH}) Any complete length space that is both non-positively curved and simply connected is (globally) CAT(0).
\end{theorem}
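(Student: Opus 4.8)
The plan is to prove the statement as a local-to-global principle: non-positive curvature is a local hypothesis, simple connectivity is a global one, and together they must force the global comparison inequality. I would fix the local data first. Since $X$ is non-positively curved, every point lies in an open ball that is CAT(0), hence uniquely geodesic, with geodesics varying continuously with their endpoints and the comparison inequality holding for any triangle contained in it. Any path in $X$ is compact, so it is covered by finitely many such balls; this is the only uniformity I need (I do not assume $X$ is compact). I call a path a \emph{local geodesic} if it is locally length-minimizing, noting that inside a CAT(0) ball such paths are genuine and unique geodesics.

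Next I would produce a unique geodesic between arbitrary points $p,q$. Using the finite cover of a path by CAT(0) balls, together with the local uniqueness and continuity of geodesics, I would show that every homotopy class rel endpoints of paths from $p$ to $q$ contains exactly one local geodesic, and that this local geodesic varies continuously with the homotopy. The mechanism is a connectedness argument: along a homotopy, the set of parameters $s$ for which the accompanying local geodesic exists and is unique is open (by local existence in the CAT(0) balls) and closed (completeness lets me pass to limits), hence all of $[0,1]$. Because $X$ is simply connected, all paths from $p$ to $q$ are homotopic rel endpoints, so a single local geodesic $\gamma_{pq}$ joins them. At this stage $\gamma_{pq}$ is known only to be locally minimizing; its global minimality will follow once the comparison inequality is in hand.

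The heart of the argument is upgrading the local comparison to a global one. Given a geodesic triangle $\Delta$ with vertices $p,q,r$ and sides the geodesics just constructed, I would subdivide $\Delta$ finely enough that each small triangle of the subdivision lies inside a single CAT(0) ball, where the comparison inequality holds. I would then assemble these local estimates using Alexandrov's lemma, which controls comparison angles when two Euclidean comparison triangles are glued along a shared side and guarantees that the per-piece comparisons concatenate in the correct direction. Refining the subdivision and passing to the limit yields the CAT(0) inequality for $\Delta$, and hence for every geodesic triangle in $X$.

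The main obstacle lies at the interface of the last two steps: making the homotopy-through-local-geodesics argument fully rigorous — the open-closed connectedness argument, with completeness supplying the closedness — and then ensuring that the Alexandrov-lemma bookkeeping in the subdivision step genuinely propagates the comparison across every gluing without accumulating error as the mesh shrinks. The remaining ingredients, namely the CAT(0) structure of small balls, the compactness of an individual path, and the deduction of the global minimality of $\gamma_{pq}$ from the comparison inequality, are comparatively routine.
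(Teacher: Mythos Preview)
The paper does not give its own proof of this theorem; it is stated as a citation from Bridson--Haefliger \cite{BH} (Theorem II.4.1) and used as a black box. So there is nothing in the paper to compare your proposal against.

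That said, your outline is essentially the standard argument one finds in \cite{BH}: first establish that in a complete, locally CAT(0) space, local geodesics issuing from a point depend continuously on their endpoints (the Cartan--Hadamard map), then use simple connectivity to get a unique local geodesic in each homotopy class, and finally run an Alexandrov patchwork on a fine subdivision to upgrade local comparison to global. Your identification of the two delicate points---the open--closed argument for the homotopy of local geodesics, and the bookkeeping in the Alexandrov gluing---is accurate; these are exactly where the work lies in \cite{BH}. One small clarification: in your third paragraph you form a triangle whose sides are only known to be local geodesics, and then apply the subdivision argument. This is fine, but it is worth being explicit that the patchwork proves the CAT(0) inequality for \emph{that} local-geodesic triangle, and only then do you conclude that local geodesics are global (hence that the space is genuinely geodesic with the required comparison). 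Bridson--Haefliger organize this slightly differently, first proving uniqueness of local geodesics between points (so the space is uniquely locally geodesic), then running the patchwork; either order works, but the logical dependencies should be made explicit.
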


This theorem is often called a `local-to-global' theorem, since it says that if a simply connected complete length space is locally CAT(0), then it is globally CAT(0).

One theorem that we rely on (albeit indirectly) is the Flat Quadrilateral Theorem \cite{BH}. Before stating the theorem, we give some necessary definitions.

First, we define the Alexandrov angle between two geodesics of a geodesic space $X$. Suppose $c:[0,a] \rightarrow X$ and $c^\prime: [0,b] \rightarrow X$ are geodesics with the same starting point $c(0)=c^\prime(0)$. We define the angle (at $c(0)$) between $c$ and $c^\prime$ using the limits of angles in comparison triangles. Specifically, if $\overline{\triangle}(c(0),c(t), c^\prime(t^\prime))$ is the comparison triangle for the triangle with vertices $c(0)$, $c(t)$, and $c^\prime(t^\prime)$, and $\overline{\angle}_{c(0)}(c(t), c^\prime(t^\prime))$ is the angle of 
$\overline{\triangle}(c(0),c(t), c^\prime(t^\prime))$ 
at the point corresponding to $c(0)$, then the angle between $c$ and $c^\prime$ is $lim_{t, t^\prime \rightarrow 0}\overline{\angle}_{c(0)}(c(t), c^\prime(t^\prime))$. 

Second, two quick definitions: first, a subspace $A$ of a geodesic metric space is \emph{convex} if for any two points of $A$, any geodesic that connects them is contained in $A$, and second, the \emph{convex hull} of a subset $B$ of a geodesic space is the intersection of all convex subspaces that contain $B$. We note for later use that a convex subspace of a CAT(0) space is itself a CAT(0) space.

\begin{theorem}(Flat Quadrilateral Theorem; Theorem II.2.11 in \cite{BH})
Consider four points $p$, $q$, $r$, and $s$ in a CAT(0) space $X$, along with the four geodesic segments $[p,q], [q, r], [r, s]$, and $[s,p]$.
If the angle between the segments at each corner is at least $\frac{\pi}{2}$, then the convex hull of $p$, $q$, $r$, and $s$ is isometric to a Euclidean rectangle. 
\end{theorem}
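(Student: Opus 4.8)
The plan is to cut the quadrilateral into two geodesic triangles along the diagonal $[p,r]$ and then squeeze, out of the CAT(0) angle comparison, enough rigidity to force each triangle to be flat and to force the two flat pieces to develop together into a planar rectangle. Two standard facts about a CAT(0) space $X$ will drive the argument. First, the Alexandrov angle obeys the triangle inequality at each point: for geodesics issuing from a point $a$ one has $\angle_a(b,d) \leq \angle_a(b,c) + \angle_a(c,d)$. Second, the three Alexandrov angles of any geodesic triangle sum to at most $\pi$; this is immediate from the CAT(0) inequality, since each Alexandrov angle is bounded above by the corresponding angle of the Euclidean comparison triangle, and the latter sum to exactly $\pi$.

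First I would introduce the diagonal $[p,r]$, splitting the region into triangles $\triangle(p,q,r)$ and $\triangle(p,s,r)$, and name the four corner angles $\angle_p(q,s)$, $\angle_q(p,r)$, $\angle_r(q,s)$, $\angle_s(p,r)$. By hypothesis each corner angle is at least $\tfrac{\pi}{2}$, so their total is at least $2\pi$. On the other hand, applying the second fact to each of the two triangles and adding gives a total of at most $2\pi$ for the six sub-angles; and by the first fact the corner angles at $p$ and at $r$ are each bounded above by the sum of the two sub-angles the diagonal creates there. Chaining these together produces a sandwich $2\pi \leq \angle_p(q,s)+\angle_q(p,r)+\angle_r(q,s)+\angle_s(p,r) \leq 2\pi$, so every inequality collapses to equality. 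In particular each corner angle equals exactly $\tfrac{\pi}{2}$, each of the two triangles has angle sum exactly $\pi$, and the angle triangle inequality holds with equality at both $p$ and $r$.

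Next I would invoke the flat triangle lemma of \cite{BH}, the rigidity companion to the angle comparison: a geodesic triangle in a CAT(0) space whose angle sum is $\pi$ has convex hull isometric to its Euclidean comparison triangle. Hence both $\triangle(p,q,r)$ and $\triangle(p,s,r)$ bound honest flat triangles sharing the edge $[p,r]$. The equality in the angle triangle inequality at $p$ and at $r$ means the two flat pieces meet along $[p,r]$ with no folding, the sub-angles opening out to the full corner angle, so that their union develops isometrically onto a single convex planar quadrilateral; since its four corners are each $\tfrac{\pi}{2}$, that planar quadrilateral is a Euclidean rectangle.

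The main obstacle is the final step: confirming that this developed rectangle is genuinely isometrically embedded in $X$ and is the convex hull of $\{p,q,r,s\}$, not merely an abstract gluing. I would show that the union of the two flat triangles is convex in $X$ — a geodesic between two of its points cannot be shortened by leaving the union, because $[p,r]$ is itself a geodesic and the angle equalities guarantee that paths crossing the diagonal are not bent — and then, being convex and containing all four points, this union contains and therefore equals their convex hull. Care is needed to dispose of degenerate configurations (a collinear triple or a side of length zero), which must either be read as degenerate rectangles or else are ruled out by the forced angle equalities; in each such case the conclusion holds trivially.
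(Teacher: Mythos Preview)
The paper does not give its own proof of this statement: the Flat Quadrilateral Theorem is quoted as Theorem~II.2.11 of \cite{BH} and used as a black box, so there is nothing in the paper to compare your argument against. Your sketch is essentially the proof that appears in \cite{BH}: split along the diagonal, use the CAT(0) angle-sum bound on each triangle together with the triangle inequality for Alexandrov angles at $p$ and $r$ to force all inequalities to collapse, then invoke the flat-triangle rigidity (Proposition~II.2.9 in \cite{BH}) on each half. The one place your write-up is looser than the source is the gluing step: rather than arguing directly that the union of the two flat triangles is convex in $X$, \cite{BH} glues the two Euclidean comparison triangles in $\E^2$, observes that the resulting planar quadrilateral is convex (because the comparison angles at $\bar p$ and $\bar r$ are each at most $\pi$), and then applies Alexandrov's patching lemma plus the equality case of the CAT(0) inequality to conclude that the natural map from this planar rectangle to $X$ is an isometry onto the convex hull. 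Your informal ``no folding'' remark is pointing at the same phenomenon, but the Alexandrov-lemma route makes it rigorous without a separate convexity-in-$X$ argument.
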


This thesis is concerned with isometric actions on certain kinds of CAT(0) spaces. We now discuss isometric actions on an arbitrary CAT(0) space.

An action by a group $G$ on a metric space $X$ is a map $G\rightarrow Homeo(X)$, where $Homeo(X)$ is the group of homeomorphisms of $X$. 
The action is said to be \emph{isometric} if the image lies in the group of isometries of $X$. 
The action is said to be \emph{cocompact} if there is a compact set $K$ such that $X=G.K$. 
Finally, we say that the action is \emph{proper} if each element $x \in X$ has an open neighborhood $N(x)$ for which the set $\{g\in G: N(x) \cap g. N(x) \neq \emptyset\}$ is finite. 
Note that an action of a group $G$ on a topological space $X$ is often said to be proper if for every compact subset $K$ of $X$, the set $\{g\in G: K \cap g. K \neq \emptyset\}$ is finite. The two definitions are equivalent if $X$ is a proper metric space (that is, closed balls are compact) and $G$ acts by isometries on $X$ (see Bridson and Haefliger \cite{BH}); thus the two definitions are equivalent for the actions we consider.

Every isometric action $\alpha$ of a group $G$ on a metric space $X$ has an associated \emph{translation length function} $l^\alpha$. This is the function $l^\alpha:G \rightarrow \R$ that sends an element $g \in G$ to $l^\alpha(g):=inf_{x\in X}\{d(x,g.x)\}$; that is, it sends $g$ to the infimum of distances moved by points of $X$ under the action of $g$.

For each action $\alpha:G \circlearrowleft X$ and each element $g \in G$, we define the {\sl minset} of  $g$ by $\text{Min}^\alpha(g):= \{x \in X : d(x, g.x\})=l^\alpha(g)$; thus $\m(g)$ is the set of points of $X$ for which the distance moved under the action of $g$ attains its infimum. Also, for each subgroup $H \subset G$, we define  $\m(H)=\cap_{h\in H}\m(h)$.

An element $g\in G$ is said to be \emph{semisimple} if $\m(g)$ is non-empty; that is, if the infimum $l^\alpha(g)$ of distances moved under the action of $g$ is attained. Also, an element $g$ is said to be \emph{hyperbolic} if it is semisimple and $l^\alpha(g) > 0$. An action $\alpha:G\circlearrowleft X$ is said to be \emph{semisimple} if every $g\in G$ is semisimple, and \emph{hyperbolic} if every  non-identity element $g\in G$ is hyperbolic.

We mention some properties of  length functions and minsets that are important for this thesis; these and others are discussed in Bridson and Haefliger \cite{BH}. For an isometric action $\alpha: G \circlearrowleft X$, for any $g,h \in G$,  $l^\alpha (g)= l^\alpha(hgh^{-1})$. Thus the length function $l^\alpha$ is constant on conjugacy classes of $G$. Furthermore, $h.\m(g)=\m(hgh^{-1})$. In particular, if $h$ commutes with $g$, then $h.\m(g)=\m(g)$; for example, $g.\m(g)=\m(g)$. 

If $X$ is a CAT(0) space, then for any $g\in G$, $\m(g)$ is convex. If $g$ is a hyperbolic isometry, then we know quite a lot about the geometry of $\m(g)$. In fact, $\m(g)\cong Y \times U$, where $U \cong \R$ and $Y$ is a convex subspace of $X$ (throughout this thesis we use $\cong$ to denote that two spaces are isometric). Furthermore the action of $g$ on $\m(g)$ sends a point $(y, t)$ to the point $(y, t+ l^\alpha(g))$. In other words, $g$ acts by the identity on the component $Y$ and by translation by $l^\alpha(g)$ on the component $U\cong \R$. Accordingly, $\m(g)$ is a union of axes of $g$, where an axis of $g$ is a geodesic line on which $g$ acts by non-trivial translation. 

In fact, we can say more than this. The following theorem is important to this work.

\begin{theorem}
(The Flat Torus Theorem; Theorem II.7.1 in \cite{BH}) Suppose the group $H\cong \Z^n$ acts properly, by semi-simple isometries on the CAT(0) space $X$. Then $\m(H)\cong Y \times \E^n$, where $Y$ is a convex subspace of $X$. Furthermore, every $h\in H$ acts as the identity on the component $Y$ and as translation by $l^\alpha(h)$ on the component $\E^n$. Finally, the quotient of each $n$-flat $\{y\} \times \E^n$ by the action of $H$ is an $n$-torus.
\end{theorem}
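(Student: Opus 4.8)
The plan is to argue by induction on $n$. The case $n=1$ is essentially contained in the structure theorem for a single hyperbolic isometry recalled just above the statement: if $H=\langle g\rangle\cong\Z$ acts properly and by semisimple isometries, then $g$ must have positive translation length, since otherwise $g$ (and all its powers) would fix a point $x$, so that $x\in N(x)\cap g^k.N(x)$ for every $k$, contradicting properness. Hence $g$ is hyperbolic and $\m(g)\cong Y\times\R$, with $g$ acting trivially on $Y$ and by translation by $l^\alpha(g)$ on the $\R$-factor. The quotient of an axis $\{y\}\times\R$ by $\langle g\rangle$ is a circle, i.e.\ a $1$-torus, which gives the base case.

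For the inductive step I would choose a basis and split $H=H'\times\langle g\rangle$ with $H'\cong\Z^{n-1}$. By the inductive hypothesis $\m(H')\cong Y_0\times\E^{n-1}$, with $H'$ acting trivially on $Y_0$ and by translations on the flat factor $\E^{n-1}$. Since $g$ commutes with every element of $H'$, the identity $h\cdot\m(k)=\m(hkh^{-1})$ (for commuting $h,k$) shows that $g$ preserves $\m(H')$, and hence restricts to an isometry of the convex subspace $Y_0\times\E^{n-1}$.

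The crux of the argument is to show that $g$ respects this product splitting and acts as a product isometry $g=g_0\times\bar g$, with $g_0$ an isometry of $Y_0$ and $\bar g$ an isometry of $\E^{n-1}$. The point is that the Euclidean factor is canonical: $\E^{n-1}$ is the flat spanned by the translation directions of $H'$, and an isometry commuting with this group of Clifford translations must carry each flat $\{y\}\times\E^{n-1}$ to a parallel flat, thereby inducing well-defined isometries on each factor. Granting this, $g$ is a semisimple product isometry, so its translation length splits as a Pythagorean sum and $\m(g)\cap(Y_0\times\E^{n-1})=\m(g_0)\times\m(\bar g)$. The Euclidean isometry $\bar g$ is semisimple, so on $\m(\bar g)$ it is a genuine translation and $\m(\bar g)$ is an affine, hence Euclidean, subspace; combining the $n-1$ translation directions of $H'$ with the axis direction of $g$ then assembles $\m(H)=\m(H')\cap\m(g)$ into the desired form $Y\times\E^{n}$ with the stated action.

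To identify the torus quotient I would invoke properness directly: because $H\cong\Z^n$ acts properly, and hence with discrete orbits on a single flat $\{y\}\times\E^n$, the $n$ translation vectors realizing a basis of $H$ must be linearly independent, so $H$ acts on that flat as a lattice of translations and the quotient is an $n$-torus. The main obstacle is the crux step above, namely proving that $g$ splits as a product isometry preserving the decomposition $Y_0\times\E^{n-1}$; this rests on the canonicity of the maximal flat (de Rham) factor and on a lemma that an isometry commuting with a group of Clifford translations preserves the associated splitting. A secondary technical point is verifying that the various translation directions genuinely assemble into a single flat carrying the Euclidean product metric, rather than a mere convex union of parallel lower-dimensional flats.
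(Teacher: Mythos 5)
This theorem is imported by the paper directly from Bridson--Haefliger (Theorem II.7.1 in \cite{BH}) with no proof given, so there is no internal argument to compare against; your induction on $n$ is, in outline, exactly the standard proof in \cite{BH}, which rests on the single-isometry decomposition $\m(g)\cong Y\times\R$ together with the fact that any isometry commuting with $g$ preserves $\m(g)$ and its splitting (Theorem II.6.8 in \cite{BH}). Your sketch is correct, including at the two points needing care: the crux splitting $g=g_0\times\bar g$ is legitimate because $\bar g$ commutes with a full-rank lattice of translations of $\E^{n-1}$ and is therefore a genuine translation of all of $\E^{n-1}$ (not merely a semisimple Euclidean isometry, which would leave $\m(\bar g)$ a proper affine subspace and break the assembly into $\E^n$), and properness correctly rules out the degenerate case where the $n$ translation vectors fail to be independent, since a nontrivial element of $\Z^n$ fixing a point would violate properness, which yields the lattice action and the $n$-torus quotient.
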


We will shortly return to this theorem to talk about some implications, but first we discuss cubical complexes and rectangle complexes.

A \emph{cubical complex} is a cell complex $X$ where each $n$-cell $\sigma$ is a unit Euclidean cube with injective characteristic map $p: \sigma \rightarrow X$, and if the images of two characteristic maps $p_1$,~$p_2$ intersect in the set $R\neq \emptyset$, then $p_1^{-1}(R)$,~${p_2}^{-1}(R)$ are faces of $\sigma_1$,~$\sigma_2$, and ${p_2}^{-1}\circ p_1$ is an isometry between these faces  (see \cite{BH}). $X$ is equipped with the length metric $d$ induced by the Euclidean metric on each cell. That is, for any two points $x_1, x_2 \in X$, $d(x_1, x_2)$ is defined to be the infimum of the lengths of paths joining $x_1$ and $x_2$ (where the length of a path is defined using the lengths of subpaths that are contained in single cells.) We think of cubical complexes as being constructed by gluing together unit Euclidean cubes along their faces. Note that cubical complexes and simplicial complexes both satisfy the same kinds of additional conditions on the basic cell complex construction.

Cubed complexes are defined similarly but with fewer constraints, so that every cubical complex is a cubed complex. By definition, a \emph{cubed complex} is a cell complex $X$ where each $n$-cell $\sigma$ is a unit Euclidean cube whose characteristic map restricts to an isometry on each face (see \cite{BH}); the metric is the length metric induced by the Euclidean metric on each cell. In contrast to cubical complexes, characteristic maps are not required to be injective and two cubes may intersect in more than one face. An example of a cubed complex that is not a cubical complex is the 2-dimensional torus, constructed as usual using one vertex, two edges, and a square. 

In this thesis, the spaces we deal with are rectangle complexes. These are described  in exactly the same way as cubical complexes, except that each cell is a \emph{hyperrectangle}: a finite product  $\prod[0,r_i]$ of intervals of possibly different lengths; in particular a 2-cell is a Euclidean rectangle. Different hyperrectangles of the same dimension may have different edge lengths. However, in this thesis, each rectangle complex is assumed to have only finitely many isometry types of cells (this property is denoted by saying the complex \emph{has finite shapes}), so it has only finitely many different edge lengths. We think of rectangle complexes as being constructed by gluing together Euclidean hyperrectangles along their faces. Any rectangle complex with finite shapes is a complete geodesic metric space, by a theorem of Bridson (see \cite{BH}). 

We say that a rectangle complex is \emph{homogeneous of dimension $n$} 
if every maximal hyperrectangle is $n$-dimensional. Our main theorem concerns actions on rectangle complexes that are homogeneous of dimension 2; we think of such complexes as being constructed by gluing together Euclidean (2-dimensional) rectangles along their edges.

The \emph{link} $lk(x)$ of a vertex $x$ of a rectangle complex $X$ is the simplicial complex that contains one $k$-dimensional  simplex for every $(k+1)$-dimensional hyperrectangle containing $x$. We can think of $lk(x)$ as the set of points of $X$ of distance $\epsilon$ from $x$, where $\epsilon$ is smaller than the length of every edge with endpoint $x$, endowed with the natural simplicial complex structure inherited from the rectangle complex structure of $X$. For example, if $X$ is composed of two (2-dimensional) rectangles joined along an edge, and $x$ is an endpoint of that edge, then $lk(x)$ is a graph consisting of two edges joined at a single vertex.

The link of a vertex of a cubed complex is defined in the same way, except that the link might not be a simplicial complex: in particular, in the cell complex structure of the link, characteristic maps might not be injective, and two simplices might meet in more than a single face.

A \emph{flag complex} is a simplicial complex for which every complete subgraph of the 1-skeleton spans a simplex. If $x$ is a vertex of a cubed complex $X$, and if $lk(x)$ is a simplicial complex, then it is a flag complex if and only if every collection of $(n-1)$-dimensional cubes of $X$ that could possibly bound a corner of an $n$-dimensional cube at $x$ actually does so. We have the following theorem, due to Gromov. 

\begin{theorem}
\label{npc}
(Theorem II.5.20 in \cite{BH}) 
A finite-dimensional cubed complex is non-positively curved if and only if the link of each vertex is a flag complex.
\end{theorem}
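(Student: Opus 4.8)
The plan is to localize the problem and then translate geometry into combinatorics in two stages: first reduce non-positive curvature to a curvature condition on links, and then reduce that condition to the flag condition. Since being non-positively curved means being locally CAT(0), it suffices to exhibit around each point of $X$ a neighborhood that is CAT(0). Away from the vertices this is automatic: a point in the interior of a $k$-dimensional cube has a neighborhood that splits isometrically as an open Euclidean ball times the cone on the link of its carrying cell, and because finite-dimensionality together with the finite-shapes property gives uniform control on edge lengths, such neighborhoods inherit CAT(0) geometry from lower-dimensional strata. Thus the only points at which the curvature condition can fail are the vertices, and the whole theorem reduces to deciding, for each vertex $x$, when a small ball about $x$ is CAT(0).

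Next I would carry out the cone reduction. A sufficiently small ball about a vertex $x$ is isometric to a ball about the apex of the Euclidean cone on $lk(x)$, where $lk(x)$ carries the all-right spherical metric in which every simplex is the spherical simplex spanned by an orthonormal set, so that each edge has length $\frac{\pi}{2}$; this is exactly the infinitesimal picture of the corner of a Euclidean cube. The key input here is Berestovskii's cone theorem: the Euclidean cone on a metric space $Y$ is CAT(0) if and only if $Y$ is CAT(1). Applying this at every vertex, $X$ is non-positively curved if and only if $lk(x)$, with the all-right spherical metric, is CAT(1) for every vertex $x$. It then remains to prove the combinatorial heart of the matter, Gromov's Lemma: a finite-dimensional all-right spherical complex $L$ is CAT(1) if and only if it is a flag complex.

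I would prove Gromov's Lemma by induction on $\dim L$. In dimension $1$, $L$ is a metric graph with all edges of length $\frac{\pi}{2}$, and a graph is CAT(1) exactly when it contains no closed geodesic shorter than $2\pi$; the shortest embedded cycle has three edges and length $\frac{3\pi}{2} < 2\pi$, so $L$ is CAT(1) iff it is triangle-free iff it is flag. For the inductive step I would invoke the general criterion that a locally CAT(1) space is CAT(1) if and only if it contains no closed geodesic of length less than $2\pi$, together with the fact that the link in $L$ of a vertex is again an all-right spherical complex of strictly smaller dimension, hence by induction CAT(1) iff flag, so that $L$ is locally CAT(1) iff all vertex links of $L$ are flag. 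For the direction flag $\Rightarrow$ CAT(1): since links of a flag complex are again flag, $L$ is locally CAT(1), and I would rule out short closed geodesics by a combinatorial analysis of the sequence of open simplices a putative geodesic of length $<2\pi$ crosses, using flagness to force the supporting simplices to span a single simplex, inside which the arc is a great-circle arc of length at least $\pi$. For the converse I would argue by contrapositive: if $L$ is not flag, choose a minimal missing simplex $\{v_0,\dots,v_k\}$ with $k\geq 2$; when $k=2$ the three edges $v_0v_1$, $v_1v_2$, $v_2v_0$ form a closed geodesic of length $\frac{3\pi}{2}<2\pi$ (the angle at each vertex equals the link-distance between the two outgoing directions, which is at least $\pi$ precisely because the triangle is unfilled), contradicting CAT(1); when $k>2$, minimality makes $\{v_1,\dots,v_k\}$ a missing simplex in $lk(v_0)$, which by induction is not CAT(1), so $L$ fails to be locally CAT(1). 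Composing the two reductions yields the theorem.

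The step I expect to be the main obstacle is the flag $\Rightarrow$ CAT(1) direction of Gromov's Lemma, specifically the proof that a flag all-right spherical complex admits no closed geodesic of length less than $2\pi$: unlike the explicit short-geodesic construction used in the converse, this requires careful global bookkeeping of how a geodesic transits between adjacent simplices and a genuine use of the flag condition to forbid short returns. A secondary technical burden is making the cone reduction and the appeal to Berestovskii's theorem precise — in particular, using the finite-shapes hypothesis to guarantee a uniform radius within which balls about vertices are genuinely conical — and confirming that the non-vertex strata introduce no additional curvature obstruction.
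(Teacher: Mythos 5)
The paper offers no proof of this statement at all---it is quoted directly from Bridson--Haefliger (Theorem II.5.20 in \cite{BH})---and your proposal correctly reconstructs essentially the argument given in that source: reduce non-positive curvature to a condition on vertex links via the cone structure of small balls, apply Berestovskii's theorem that the Euclidean cone on $Y$ is CAT(0) if and only if $Y$ is CAT(1), and establish Gromov's lemma (an all-right spherical complex is CAT(1) if and only if it is flag) by induction on dimension using the criterion that a locally CAT(1) space is CAT(1) precisely when it has no closed geodesic of length less than $2\pi$. Your outline is sound, including the honest identification of the flag $\Rightarrow$ CAT(1) direction as the technical core; the one step stated too casually is that non-vertex points are ``automatic,'' which actually requires observing that the link of a positive-dimensional cell is the link of a simplex inside a vertex link (hence again flag, hence CAT(1) by the inductive lemma) together with the fact that spherical join with a round sphere preserves the CAT(1) property, rather than any appeal to finite shapes or uniform edge lengths.
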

 
Recall that the torus is a cubed complex (though not a cubical complex). The unique vertex of the torus has link a cycle graph with four edges. This is a simplicial complex since it contains no loops or bigons, and it is flag since it contains no triangles. Thus by Theorem~\ref{npc} the torus is non-positively curved.

Since the link of a point in a rectangle complex does not change if we rescale all edges to have length one, a version of Theorem~\ref{npc} holds for rectangle complexes as well: a rectangle complex with finite shapes has non-positive curvature if and only if the link of each vertex is a flag complex. In the case of a 2-dimensional rectangle complex, the link of a vertex is a simplicial graph; thus it is a flag complex if and only if it contains no triangles. 

Combining the rectangle complex version of Theorem~\ref{npc} with the Cartan-Hadamard Theorem, we get the following.  Note that the assumptions of the Cartan-Hadamard Theorem hold for a rectangle complex of finite shapes, since as noted above, such a complex is a complete geodesic space and thus a complete length space.

\begin{theorem}
\label{rc CAT(0)} 
A rectangle complex with finite shapes is a CAT(0) space if and only if it is simply connected and the link of each vertex is flag.
\end{theorem}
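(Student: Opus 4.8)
The plan is to prove the two implications of the biconditional separately, in each case invoking the rectangle-complex version of Gromov's theorem (the analogue of Theorem~\ref{npc} recorded just above) together with the Cartan--Hadamard Theorem and the completeness facts already established. Throughout, I would keep in mind that \emph{non-positively curved} means \emph{locally} CAT(0), that a rectangle complex with finite shapes is a complete geodesic (hence complete length) space by Bridson's theorem, and that for such a complex non-positive curvature is equivalent to every vertex link being flag.

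For the forward implication, suppose $X$ is globally CAT(0). I would first observe that the global CAT(0) inequality holds in particular on every sufficiently small metric ball — each such ball is convex in a CAT(0) space and hence is itself CAT(0) — so $X$ is locally CAT(0), i.e. non-positively curved. Applying the rectangle-complex version of Theorem~\ref{npc} then gives that the link of each vertex is flag. Moreover, as noted in the preliminaries, every CAT(0) space is contractible and therefore simply connected. This establishes both required conclusions.

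For the reverse implication, suppose $X$ is simply connected and every vertex link is flag. The rectangle-complex version of Theorem~\ref{npc} gives that $X$ is non-positively curved. Because $X$ has finite shapes, it is a complete length space; it is simply connected by hypothesis and non-positively curved by the previous sentence. The Cartan--Hadamard Theorem (Theorem II.4.1 in \cite{BH}) then yields that $X$ is globally CAT(0), completing that direction. The two directions together give the stated equivalence.

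Since the argument is essentially bookkeeping among results cited earlier, I do not expect a substantial obstacle. The two points requiring care are, in the reverse direction, the hypothesis-checking for Cartan--Hadamard — specifically confirming that finite shapes guarantees both completeness and the length-space property, which is exactly the content of Bridson's theorem quoted above — and, in the forward direction, noting explicitly that the global CAT(0) condition descends to the local (non-positively curved) condition needed before one can invoke Gromov's flag-link criterion.
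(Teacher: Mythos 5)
Your proposal is correct and follows essentially the same route as the paper, which obtains the theorem by combining the rectangle-complex version of Theorem~\ref{npc} with the Cartan--Hadamard Theorem, noting (via Bridson's theorem) that finite shapes makes $X$ a complete geodesic, hence complete length, space. Your added care in the forward direction --- deducing local CAT(0) from global CAT(0) via convexity of small balls, and simple connectivity from contractibility --- is exactly the bookkeeping the paper leaves implicit.
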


CAT(0) cubical complexes have been well-studied, and many properties also hold for CAT(0) rectangle complexes. We mention a few useful results here (see \cite{S} for details).

An important concept is that of a wall. First, we give a preliminary definition: for any $n$-dimensional hyperrectangle $\sigma$, a \emph{midplane} is an $(n-1)$-dimensional hyperrectangle parallel to one of the $(n-1)$-dimensional faces $F$ of $\sigma$ and passing through the midpoint of any edge perpendicular to $F$. In a rectangle complex, two midplanes are equivalent if they intersect in the midpoint of an edge. This generates an equivalence relation on midplanes; a \emph{wall} is the union of the midplanes of an equivalence class.

If the rectangle complex $X$ is CAT(0), walls have many nice properties. Suppose $W$ is a wall of a CAT(0) rectangle complex. Then (1) the union of hyperrectangles that contain a midplane of $W$ is a convex subcomplex of $X$, (2) $W$ is convex and thus itself a CAT(0) space (in fact, it inherits the structure of a CAT(0) rectangle complex), (3) $X\setminus W$ has two connected components, and (4) every collection of pairwise intersecting hyperplanes has non-empty intersection. Also, a geodesic that intersects $W$ in a segment of positive length must be completely contained in $W$. Since $W$ is convex, this implies that any geodesic that does not lie entirely in $W$ crosses $W$ at most once.

Let $A_\Gamma$ be a RAAG that is homogeneous of dimension $n$. In this thesis, we consider isometric, proper, semisimple actions by $\AG$ on $n$-dimensional CAT(0) rectangle complexes $X$. Suppose we have such an action by a RAAG $\AG$ on the $n$-dimensional CAT(0) rectangle complex $X$. Properness of the action along with the fact that $\AG$ is torsion-free implies that the action is \emph{free}: every point of $X$ is moved by every non-identity element of $\AG$. But then the action is free and proper; thus each element $x \in X$ has an open neighborhood $N(x)$ for which all translates $g.N(x)$ (where $g\in \AG$) are disjoint, and so the quotient map $X\rightarrow X/\AG$ is a covering map. Since $X$ is assumed to be CAT(0), it is contractible and thus simply connected; also the open balls around any point $x$ are convex, so $X$ is locally path-connected. Thus $\AG \cong \pi_1(X/\AG)$. 
Finally note that since the action is free and semisimple, every non-identity element of $\AG$ acts as a hyperbolic isometry-- so the minset of every non-identity element of $\AG$ has the geometry discussed above. 

Now suppose that $X$ is $n$-dimensional. If $H\cong \Z^n$ is a subgroup of $\AG$, we say that $H$ is a $\Z^n$-subgroup; like $\AG$, $H$ must act properly, by semi-simple isometries on $X$. By the Flat Torus Theorem, then, $\m(H) \cong Y \times \E^n$, where $Y$ is a convex subspace of $X$. Since $X$ is $n$-dimensional, $Y$ must consist of a single point, so $\m(H) \cong \E^n$. 

Summarizing the last two paragraphs: an isometric, proper, semisimple action by a RAAG $\AG$ on an $n$-dimensional CAT(0) rectangle complex $X$ will also be free and every non-identity element $g$ will act as a hyperbolic isometry. Also the map $X\rightarrow X / \AG$ is a covering map, with $\AG \cong \pi_1(X/\AG)$. Finally, minsets of $\Z^n$-subgroups are isometric to Euclidean $n$-flats. 

We now define a minimality condition, the final condition that we will require our actions to satisfy.

\begin{definition}
Suppose $A_\Gamma$ is homogeneous of dimension $n$. 
An isometric, proper, semi-simple action by $\AG$ on an $n$-dimensional CAT(0) rectangle complex $X$ is \emph{minimal} if every point of $X$ lies in the minset of some $\Z^n$-subgroup of $G$.
\end{definition}

This minimality assumption was introduced by Charney and Margolis \cite{ChM} in the 2-dimensional case as a sufficient condition for showing that the length function of an action on $X$ completely determines the isometry type of $X$ and in fact the action itself. In a sense, the assumption of minimality rids our space of extraneous points, that is, points unimportant for the structure of the action.

To get a better feel for this, let's think about the 1-dimensional case. A 1-dimensional CAT(0) rectangle complex is a metric tree. For an isometric, proper, semi-simple action by $\AG$ on a tree $X$, the minset $\m(g)$ of any element $g\in \AG$ is isometric to a real line, called the axis of $g$. Thus our definition of minimality says that an action is minimal if every point of $X$ lies in the axis of some $g\in \AG$. 

We now describe an example of an action that is not minimal. Consider a tree $X$, all of whose edges have length 1, consisting of a real line plus added edges: in particular, a single edge, called a leaf, is attached to each vertex of the line. Suppose the cyclic group $G=\langle g \rangle$ acts on $X$ by translating the line by distance 1. This action is isometric, proper, and semi-simple, but the leaves do not lie in the minset of any group element, so the action is not minimal. Again, the idea is that the attached edges and leaves are inessential for the description of the action. Note that the line forms an invariant subtree under the given action by $G$. 
In fact, in the 1-dimensional case, our minimality condition is equivalent to requiring that under the action, $X$ has no invariant subtree.

We return to the Flat Torus Theorem. Assume that a $\Z^n$-subgroup $H$ of a RAAG $\AG$ acts properly, semi-simply by simplicial isometries on an $n$-dimensional CAT(0) rectangle complex $X$. Recall that the Flat Torus Theorem tells us that $\m(H) \cong \E^n$. Since $\m(H)\cong \E^n$ has the same dimension as a maximal hyperrectangle of $X$, $\m(H)$ is a subcomplex, with a rectangle structure inherited from $X$. Note that $H$ acts simplicially, so it must preserve the rectangle structure on $\m(H)$. By the Flat Torus Theorem, $H$ acts by translations on $\m(H)$ and the quotient of $\m(H)$ by this action is an $n$-torus. Hence $H$ is a finite index subgroup of the group of translations that preserve the rectangle complex structure, so $H$ contains elements that translate parallel to each of the $n$ grid directions. Such an element is called a \emph{gridline isometry}.

We also return to the Flat Quadrilateral Theorem to discuss one implication. Let $A$, $B$ be convex subcomplexes of a CAT(0) rectangle complex $X$. A \emph{spanning geodesic} between $A$ and $B$ is a minimal length geodesic joining $A$ to $B$. It is noted in \cite{ChM} that if $X$ has finite shapes, a spanning geodesic always exists. Also, any spanning geodesic will form angle $\geq \frac{\pi}{2}$ with both $A$ and $B$ (otherwise, a shorter path exists). Now suppose there are two spanning geodesics, one with endpoints $a_1$ and $b_1$ and the other with different endpoints $a_2$ and $b_2$. Then $[a_1, b_1]$, $[b_1, b_2]$, $[b_2, a_2]$, and $[a_2, a_1]$ form a quadrilateral with all four angles greater than or equal to $\frac{\pi}{2}$. Thus by the Flat Quadrilateral Theorem, the convex hull of $a_1, b_1, b_2$, and $a_2$ is isometric to a Euclidean rectangle. This implies that the \emph{bridge} between $A$ and $B$, that is, the union of the spanning geodesics, is a convex subspace of $X$.

Finally, we describe, for any RAAG $\AG$, an isometric, proper, semisimple action on a canonical CAT(0) rectangle complex $X$. First, we construct the Salvetti complex $S_\G$ for a given RAAG $\AG$, where $\AG$ has generators $v_1, \dots, v_n$. The 1-skeleton  is a rose: it has one vertex and $n$ edges, each of length one, labeled by  $v_1, \dots, v_n$. For each commutator relation $v_1v_2=v_2v_1$, we add a unit square 2-cell, attached along the $v_1$ and $v_2$ edges to form a torus. For every three generators that pairwise commute, we add a 3-torus, attached along the three 2-tori associated to the three generators. We continue in the same fashion, adding a $k$-torus for every $k$ generators that pairwise commute. This is clearly a finite process. The resulting finite-dimensional cubed complex $S_\G$ is called the Salvetti complex for $\AG$. Note that by construction, the link of the unique vertex is a flag complex, so $S_\G$ is non-positively curved.

Using Theorem~\ref{rc CAT(0)}, we see that the universal cover $\tilde{S}_\G$  is a CAT(0) rectangle complex. Note that by construction, $\pi_1(S_\G) \cong \AG$, so $\AG$ is the group of deck transformations of $\tilde{S}_\G$. The action $\AG \circlearrowleft \tilde{S}_\G$ is clearly isometric, proper, and semi-simple. Thus we see that RAAGs act naturally on CAT(0) rectangle complexes. 

With regard to RAAGs, we mention word lengths. Let $\AG$ be a RAAG with generating set $V=\{v_1, \dots, v_n\}$.  Clearly any element of $\AG$ can be expressed as a word in letters from the set $V^{\pm1}=\{v^{\pm1}_1, \dots, v^{\pm1}_n \}$. The \emph{support} of a word $W$ is the set of all generators $v_i$ such that $v^{\pm1}_i$ appears as a letter in $W$. A \emph{reduced word} for $g\in \AG$ is a minimal length word representing $g$; it turns out that any two reduced words for $g$ have the same support. The \emph{reduced word length} $|g|$ of $g$ is the length of a minimal length word representing $g$. For example, if $\AG=\langle u, v, w : uv=vu \rangle$ then 
$|wvw^{-1}|=3$ and $|uvu^{-1}|=1$.

Also, an element $g\in \AG$ is said to be \emph{cyclically reduced} if it cannot be expressed as $g=aha^{-1}$ with $a\in V^{\pm 1}$ and $|h| < |g|$. For every element $g$, there is a unique cyclically reduced element $h$ such that $g=bhb^{-1}$ for some $b\in \AG$ where $|g|=|h|+2|b|$. The \emph{cyclically reduced word length} $\|g\|$ of $g$ is the reduced word length $|h|$ of this cyclically reduced element $h$. For example, for $\AG$ as above, $\|wvw^{-1}\|=\|uvu^{-1}\|=1$. All reduced words representing cyclically reduced elements of the same conjugacy class are cyclic permutations of each other; thus they have the same support and same length. In particular, the elements of a conjugacy class have the same cyclically reduced word length.


\section{Results}

Let $X$ be a CAT(0) rectangle complex. For two connected subsets $A$ and $B$ of $X$, we will say that a wall \emph{separates} $A$ and $B$ if $A$ and $B$ lie entirely in different components of $X \setminus W$.

Define the width $w(W)$ of a wall $W$ to be the length of any edge that crosses $W$; this is well-defined since any two edges that cross $W$ are opposite sides of some rectangle in $X$. Note that then the length $l(P)$ of an edge path $P$ is the sum of the widths of the walls that cross $P$, counted with multiplicity. 

For future reference, we note that we will use $l(Q)$ to denote the length of a path $Q$, whether $Q$ is a CAT(0) geodesic or an edge path.

Now consider the collection of paths that have one endpoint in $A$ and the other in $B$, where $A$ and $B$ are convex subcomplexes. Clearly each such path must cross at least the walls that separate $A$ from $B$. Thus if there exists a path that crosses \emph{only} the separating walls, and crosses each of those \emph{exactly once}, it will necessarily have minimal length among paths from $A$ to $B$.

We wish to show a converse: we want to show that any minimal length path from $A$ to $B$ will cross \emph{only} the separating walls, and will cross each of those \emph{exactly once}. (Note that this will show that a path that crosses only the separating walls, each exactly once, actually exists.)

This is not hard in the case that $A$ and $B$ are single vertices $a$ and $b$. In fact Theorem 4.13 of \cite{S} tells us that any minimal length path between vertices will cross no wall more than once. Thus a minimal path between vertices can cross only separating walls, and each must be crossed exactly once.

We use edge paths to define the $d_1$ metric on the set of vertices of $X$: for any two vertices $a, b$ of $X$, $d_1(a, b)$ is the length of the shortest edge path from $a$ to $b$. Thus by the previous paragraph, $d_1(a, b)=\sum_{W\in S_{a,b}}w(W)$, where $S_{a,b}$ is the set of walls that separate $a$ from $b$. 

To show the analogous results for convex subcomplexes $A$ and $B$, we must do some more work.

In what follows, we use the concept of the cellular neighborhood $N(W)$ of a wall $W$,
the union of those rectangular cells that have a midplane in $W$. Note that if $w(W)$ is the width of wall $W$, then $N(W)\cong W \times [0,w(W)]$  is convex, and so is $W \times \{t\} \subset N(W)$, for any fixed $t \in [0,w(W)]$ \cite{S}. 

The next lemma is a technical one which will be useful in proving what follows.

\begin{lemma}
\label{wall subcomplex edge}
 Suppose wall $W$ intersects the convex subcomplex $B$, and suppose $W$ crosses edge $e$ which has endpoint $b \in B$. Then edge $e$ lies in $B$.
\end{lemma}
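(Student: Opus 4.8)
The plan is to work entirely inside the cellular neighborhood $N(W)\cong W\times[0,w]$ of the wall, where $w=w(W)$, and to show directly that the edge $e$ is one of its cells lying in $B$. Under this identification the wall is the slice $W\times\{w/2\}$, each slice $W\times\{t\}$ is convex, and the edges crossed by $W$ are exactly the fibres $\{x\}\times[0,w]$ over vertices $x$ of $W$. Write $e=\{x_0\}\times[0,w]$, so that (after orienting the interval) $b=(x_0,0)$. I will call a vertex $x$ of $W$ \emph{good} if its whole fibre $\{x\}\times[0,w]$ lies in $B$; the goal is then to show that $x_0$ is good, since this gives $e\subset B$.

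First I would produce one good vertex from the hypothesis $W\cap B\neq\emptyset$. A point $p\in W\cap B$ lies in the relative interior of a unique cell $\sigma$ of $X$, and since $p\in W$ this cell is crossed by $W$; hence $\sigma=\tau\times[0,w]$ for a cell $\tau$ of $W$. Because $p\in B$ and $B$ is a subcomplex, $\sigma\subset B$, so every vertex of $\tau$ is good and moreover $\tau\times\{0\}\subset B$. Next I would connect $b$ to this region inside the bottom slice. The set $B_0:=B\cap(W\times\{0\})$ is a convex subcomplex, being an intersection of the convex subcomplexes $B$ and $W\times\{0\}$, and it contains both $b=(x_0,0)$ and the vertices of $\tau\times\{0\}$. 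Since a convex subcomplex is connected, there is an edge path $x_0=z_0,z_1,\dots,z_k$ inside $B_0$ with $z_k$ a vertex of $\tau$; in particular $(z_i,0)\in B$ for every $i$, and each $[z_i,z_{i+1}]$ is an edge of $W$, hence the midplane of a rectangular $2$-cell $Q_i=[z_i,z_{i+1}]\times[0,w]$ of $X$.

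I would then prove by downward induction on $i$, from $k$ to $0$, that every $z_i$ is good. The base case holds because $z_k$ is a vertex of $\tau$. For the inductive step, assume $z_{i+1}$ is good. Then the opposite corners $(z_i,0)$ and $(z_{i+1},w)$ of the flat rectangle $Q_i$ both lie in $B$: the first by membership in the path $B_0$, the second because the fibre of $z_{i+1}$ is in $B$. Since each cell of a CAT(0) rectangle complex is convex, the geodesic joining these corners is the diagonal of $Q_i$, which meets the interior of $Q_i$; convexity of $B$ forces this diagonal, hence an interior point of $Q_i$, hence all of the cell $Q_i$, into $B$. Thus $\{z_i\}\times[0,w]\subset Q_i\subset B$ and $z_i$ is good. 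Taking $i=0$ gives $e=\{x_0\}\times[0,w]\subset B$.

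The step I expect to be the crux is this inductive diagonal step, together with recognizing why it is the right move: purely metric convexity is too weak here, because the metric convex hull of the single vertex $b$ and a far-away good fibre is only a triangle, which does \emph{not} contain the midpoint of $e$. What rescues the argument is that $B$ is a \emph{subcomplex}, so once two opposite corners of the flat cell $Q_i$ lie in $B$ the entire cell must, and goodness can be transported one column at a time along the bottom slice. The remaining points to check carefully are that $W\cap B$ genuinely meets a crossed cell (so a good vertex exists at all) and that the connecting path can be taken inside $B_0$ (so the intermediate cells $Q_i$ exist); both follow from $B$ being a connected convex subcomplex and from the product description $N(W)\cong W\times[0,w]$.
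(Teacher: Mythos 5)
Your argument is correct, but it takes a genuinely different route from the paper's, which is a one-step geodesic argument: pick any point $y\in W\cap B$ and consider the CAT(0) geodesic $[b,y]$; it lies in $B$ by convexity of $B$ and in $N(W)\cong W\times[0,w(W)]$ by convexity of the cellular neighborhood, and since geodesics in a product interpolate the second coordinate linearly, every point of $(b,y]$ has nonzero second coordinate, so $[b,y]$ enters the relative interior of a cell of the form $\tau\times[0,w(W)]$ containing $e$, whence the subcomplex property of $B$ gives $e\subset B$ at once. You instead discretize: you extract one full fibre of $N(W)$ inside $B$ from a relative-interior point of $W\cap B$, then walk from $b$ to it along an edge path in the convex subcomplex $B\cap(W\times\{0\})$, transporting ``goodness'' one rectangle at a time via the diagonal trick (two opposite corners of a flat cell lying in the convex subcomplex $B$ force the whole cell into $B$). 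Both proofs rest on the same two mechanisms --- convexity of $B$ and the fact that a subcomplex containing a relative-interior point of a cell contains the cell --- but the paper's single geodesic replaces your entire induction, and it only uses facts already recorded in the paper ($N(W)$ and its slices are convex), whereas you additionally invoke convexity of individual cells and connectivity of the $1$-skeleton of a connected subcomplex; these are standard and true, but worth flagging as inputs. In exchange your argument proves slightly more: every column $\{z_i\}\times[0,w(W)]$ over the connecting path lies in $B$, not just $e$, and your remark about why bare metric convexity is insufficient (the convex hull of $b$ and a distant good fibre misses the midpoint of $e$) correctly isolates the role of the subcomplex hypothesis, a point the paper's shorter proof leaves implicit.
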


\begin{proof}
Let $y$ be a point in the intersection of wall $W$ with subcomplex $B$ (see Figure~\ref{fig:2}). 
\begin{figure}[htbp]
  \centering
  \includegraphics[width=3in]{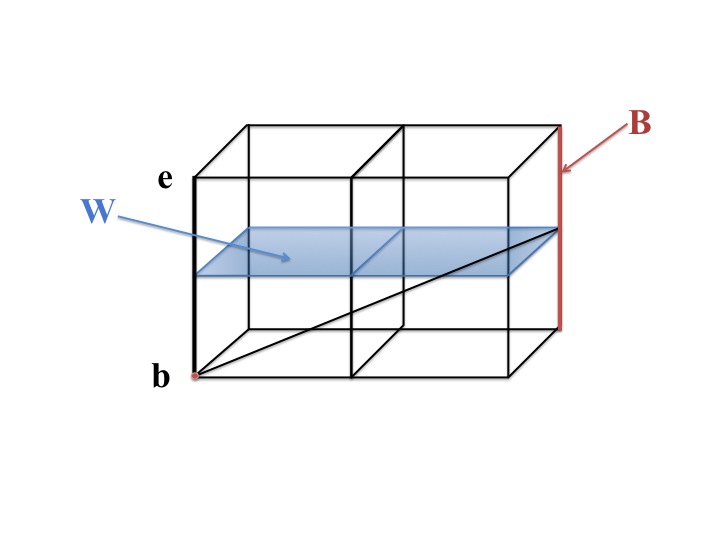}
  \caption{Illustration for Lemma~\ref{wall subcomplex edge}.}
  \label{fig:2}
\end{figure}

The geodesic $[b,y]$ lies in $B$, since $B$ is convex. Also $[b,y]$ passes through the interior of some rectangular cell containing $e$, because any point of $(b,y]$ has non-zero second coordinate as a point of $N(W)\cong W \times [0,w(W)]$ while $e=\{b\}\times [0,w(W)]$. Since $B$ is a subcomplex, this means that $e$ lies in $B$.
\end{proof}

The following lemma basically says that if a subcomplex is convex (in the CAT(0) metric), then it is convex in the $d_1$ metric.

\begin{lemma}
\label{edge path convex}
For any two vertices of a convex subcomplex $A$ of $X$, \emph{every} minimal edge path joining them is contained in $A$. 
\end{lemma}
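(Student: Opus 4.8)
The plan is to induct on the number of edges of the minimal edge path, using Lemma~\ref{wall subcomplex edge} as the main tool. Fix two vertices $a, b$ of the convex subcomplex $A$ and let $P = (e_1, \dots, e_k)$ be \emph{any} minimal edge path joining them. Recall from the discussion preceding Lemma~\ref{wall subcomplex edge} that, since $P$ is minimal, it crosses only the walls that separate $a$ from $b$, and crosses each of these exactly once; in particular each edge $e_i$ crosses a single separating wall. The base case $k=0$ is trivial, as then $a = b \in A$.

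For the inductive step, consider the first edge $e_1$, which joins $a$ to some vertex $a_1$ and crosses a separating wall $W_1$. The key observation is that $W_1$ must meet $A$. Indeed, $A$ is convex, hence a CAT(0) space and so connected; since $a$ and $b$ lie in $A$ and, by the separating property of $W_1$, lie in different components of $X \setminus W_1$, a connected set containing both must intersect $W_1$. Thus $W_1 \cap A \neq \emptyset$, and $W_1$ crosses the edge $e_1$ whose endpoint $a$ lies in $A$; by Lemma~\ref{wall subcomplex edge}, $e_1 \subset A$. In particular $a_1 \in A$.

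It then remains to handle the subpath $(e_2, \dots, e_k)$ from $a_1$ to $b$. This subpath is itself a minimal edge path: were there a shorter edge path $Q$ from $a_1$ to $b$, the concatenation of $e_1$ with $Q$ would be an edge path from $a$ to $b$ of length less than $l(P)$, contradicting the minimality of $P$. Since $a_1, b \in A$ and this subpath has $k-1$ edges, the inductive hypothesis yields $(e_2, \dots, e_k) \subset A$. Combined with $e_1 \subset A$, we conclude $P \subset A$, which proves the claim for every minimal edge path.

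I expect the only real content to lie in the middle step: producing a separating wall that meets $A$ so that Lemma~\ref{wall subcomplex edge} applies. This rests on two facts already established—that a minimal edge path crosses only separating walls, and that a connected subcomplex meeting both sides of a wall must meet the wall itself—after which the technical Lemma~\ref{wall subcomplex edge} does the geometric work of pulling the entire edge $e_1$ into $A$. Once the first edge is shown to lie in $A$, the induction is routine, so I anticipate no further obstacle.
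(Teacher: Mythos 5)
Your proof is correct and takes essentially the same approach as the paper's: both observe that a minimal edge path crosses only walls separating its endpoints, deduce from convexity of $A$ that each such wall meets $A$, and then invoke Lemma~\ref{wall subcomplex edge} to pull each successive edge into $A$. Your explicit induction along the path and your connectedness argument for why a separating wall meets $A$ are only cosmetic variants of the paper's phrasing, which walks along the path directly and uses the geodesic $[a_1,a_2]\subset A$ for the same purpose.
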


\begin{proof} 
Take a minimal length edge path $P$ between vertices $a_1$ and $a_2$. By our discussion above, since $P$ is minimal, every wall that crosses $P$ must separate $a_1$ from $a_2$. But the geodesic $[a_1, a_2]$ lies in $A$ (by the convexity of $A$) and crosses every wall that separates $a_1$ from $a_2$. So every wall that crosses $P$ must intersect $A$.

We want to show that our minimal edge path $P$ never leaves $A$. Suppose edge $e$ of $P$ has endpoint $x \in A$ and crosses wall $W$. By the above, $W$ intersects $A$. Then by Lemma~\ref{wall subcomplex edge}, $e$ lies in $A$. So path $P$ never leaves $A$, that is, $P$ is entirely contained in $A$. 
\end{proof}

\begin{corollary}
\label{boundary N(W)}
Any minimal edge path that does not cross wall $W$ but that joins two edges that cross $W$ must lie entirely in one of the boundary components of $N(W)$, that is, in either $W\times \{0\}$ or $W\times \{w(W)\} \subset N(W)$.
\end{corollary}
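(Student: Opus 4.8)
The plan is to reduce the statement to Lemma~\ref{edge path convex} by exhibiting a convex subcomplex that is forced to contain $P$. First I would locate the endpoints of $P$. Each endpoint is a vertex of an edge that crosses $W$, and in the coordinates $N(W)\cong W\times[0,w(W)]$ every edge crossing $W$ has the form $\{x\}\times[0,w(W)]$ for a vertex $x$ of $W$; its two endpoints are therefore $(x,0)\in W\times\{0\}$ and $(x,w(W))\in W\times\{w(W)\}$. Consequently the two endpoints of $P$ both lie in $(W\times\{0\})\cup(W\times\{w(W)\})$.

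Next I would show that both endpoints lie on the \emph{same} boundary component. Since no edge of $P$ crosses $W$, and a wall meets an edge (if at all) only at its midpoint while never passing through a vertex, the path $P$ is disjoint from $W$; being connected, $P$ then lies in a single component of $X\setminus W$. Recalling that $X\setminus W$ has exactly two components, with $W\times[0,w(W)/2)$ contained in one and $W\times(w(W)/2,w(W)]$ in the other, the two boundary pieces $W\times\{0\}$ and $W\times\{w(W)\}$ sit in different components. Hence the endpoints of $P$ cannot be split between them, and after relabeling I may assume that both endpoints are vertices of $W\times\{0\}$.

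It then remains to invoke Lemma~\ref{edge path convex} with $A=W\times\{0\}$, for which I must verify that $W\times\{0\}$ is a convex subcomplex of $X$. It is a subcomplex because it is a union of faces of cells of $N(W)$, and $N(W)$ is itself a subcomplex of $X$; it is convex because $W\times\{t\}$ is convex inside the convex subcomplex $N(W)$ for every $t$, and a convex subset of a convex subset of a CAT(0) space is convex. Since the minimal edge path $P$ joins two vertices of the convex subcomplex $W\times\{0\}$, Lemma~\ref{edge path convex} yields $P\subseteq W\times\{0\}$, as claimed. The work here is almost entirely setup rather than estimation: the one point deserving care is the claim that the endpoints cannot straddle the two boundary components, which rests on $P$ remaining in a single component of $X\setminus W$ together with $W\times\{0\}$ and $W\times\{w(W)\}$ lying in distinct components. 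Once that bookkeeping is in place, the corollary is an immediate consequence of Lemma~\ref{edge path convex}, so I do not anticipate any genuine obstacle beyond it.
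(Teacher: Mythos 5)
Your proof is correct and takes essentially the same route the paper intends: the corollary is stated as an immediate consequence of Lemma~\ref{edge path convex}, applied to a boundary component $W\times\{0\}$ or $W\times\{w(W)\}$ of $N(W)$, whose convexity the paper has already recorded in its discussion of cellular neighborhoods. The bookkeeping you supply --- that the endpoints of $P$ lie on these boundary components and, since $P$ avoids $W$ while $X\setminus W$ has exactly two components, on the same one --- is precisely the implicit content of the paper's deduction.
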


Note that Corollary~\ref{boundary N(W)} is also contained in Theorem 4.13 of \cite{S}.

\begin{lemma}
\label{min2}
Let $A$ and $B$ be convex subcomplexes of $X$. Then a minimal length edge path between $A$ and $B$ crosses only walls that separate $A$ from $B$, each exactly once.
\end{lemma}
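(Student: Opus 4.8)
The plan is to reduce the statement to a single length identity and then finish by a crossing-count. Since $A$ and $B$ are subcomplexes, a minimal length edge path $P$ between them has vertex endpoints $a\in A$ and $b\in B$. Any edge path from $a$ to $b$ is in particular an edge path from $A$ to $B$, so $P$ is simultaneously a minimal edge path between the vertices $a$ and $b$; by the vertex case (Theorem 4.13 of \cite{S} and the discussion preceding Lemma~\ref{edge path convex}) it crosses no wall more than once, and the walls it crosses are exactly those separating $a$ from $b$. Writing $S_{A,B}$ and $S_{a,b}$ for the sets of walls separating $A$ from $B$, respectively $a$ from $b$, we trivially have $S_{A,B}\subseteq S_{a,b}$, and the lemma is equivalent to the reverse inclusion.

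The key reduction is the claim that $l(P)=\sum_{W\in S_{A,B}}w(W)$. Granting this, the conclusion is immediate from a count: every edge path from $A$ to $B$ must cross each wall of $S_{A,B}$ at least once, since its two endpoints lie in different components of $X\setminus W$. Hence, using that the length of an edge path is the sum of the widths of the walls it crosses counted with multiplicity, we may write $l(P)=\sum_W c_W\,w(W)$ with $c_W\geq 1$ for every $W\in S_{A,B}$. As all widths are positive and $l(P)=\sum_{W\in S_{A,B}}w(W)$, we are forced to have $c_W=1$ for each $W\in S_{A,B}$ and $c_W=0$ otherwise; that is, $P$ crosses only separating walls, each exactly once.

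It therefore remains to exhibit \emph{some} edge path from $A$ to $B$ of length $\sum_{W\in S_{A,B}}w(W)$, and this is the heart of the argument. I would produce it from the nearest-point projection (gate) onto the convex subcomplex $A$: for a vertex $x$ the closest vertex $\bar x\in A$ has the property that the walls separating $x$ from $\bar x$ are precisely the walls separating $x$ from all of $A$. Projecting a vertex of $B$ onto $A$ and then alternately back and forth between $A$ and $B$ yields a pair $(a^*,b^*)\in A\times B$ whose separating walls are exactly $S_{A,B}$, and a minimal edge path between $a^*$ and $b^*$ realizes the desired length. The convexity inputs needed to set up and verify this projection are exactly the earlier lemmas: Lemma~\ref{wall subcomplex edge} lets me pass from a wall meeting a convex subcomplex to an honest edge of that subcomplex crossing the wall, Lemma~\ref{edge path convex} keeps the constructed subpaths inside the relevant convex pieces, and Corollary~\ref{boundary N(W)} together with the product structure $N(W)\cong W\times[0,w(W)]$ lets me relocate pieces of a path onto a boundary component $W\times\{0\}$ or $W\times\{w(W)\}$. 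Alternatively, the convex bridge between $A$ and $B$ constructed in the preliminaries via the Flat Quadrilateral Theorem furnishes the same path, its fibers crossing precisely the separating walls.

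The main obstacle is exactly this construction, and the reason it is delicate is that no single-wall rerouting suffices. If one tries to eliminate a non-separating wall $W$ crossed by $P$ by sliding the endpoint across an edge of $A$ that crosses $W$ — such an edge exists, by Lemma~\ref{wall subcomplex edge}, whenever $W$ meets $A$ — the distance to $B$ can increase through \emph{other} walls, so there is no local contradiction with minimality. All separating walls must be treated simultaneously, which is what the projection (or bridge) accomplishes; the remaining care lies in converting the CAT(0) spanning geodesics of the bridge into a genuine edge path with the same crossing set, for which Corollary~\ref{boundary N(W)} and the product structure of $N(W)$ are the essential tools.
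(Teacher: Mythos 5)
Your reduction in the first two paragraphs is correct and airtight: once some edge path from $A$ to $B$ of length $\sum_{W\in S_{A,B}}w(W)$ is exhibited, the counting argument forces $c_W=1$ for separating walls and $c_W=0$ otherwise. But your proof stops exactly where you say the heart is. The gate property you invoke --- that the walls separating a vertex $x$ from its closest vertex $\bar x\in A$ are precisely those separating $x$ from $A$ --- is, within this paper's framework, nothing other than the special case of Lemma~\ref{min2} in which one of the two subcomplexes is the single vertex $\{x\}$; a minimal edge path from $x$ to $A$ ends at such a $\bar x$, and the gate statement says exactly that this path crosses only walls separating $\{x\}$ from $A$. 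So citing it without proof is citing a special case of the statement being proven, and its standard proof is precisely the rerouting argument you still owe. The bridge alternative has the same problem in this paper's logical order: the fact that spanning geodesics (and the associated edge paths) cross only walls separating $\m(v)$ from $\m(u)$ is established in Lemma~\ref{P^u vertices} \emph{using} Lemma~\ref{min2}, so it cannot be used to prove it. (The alternating-projection termination is a minor omission --- finite shapes makes the set of path lengths discrete --- but it too should be said.)

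Moreover, your stated reason for abandoning the local approach is mistaken, and this is where the paper's actual proof lives: a single-wall rerouting \emph{does} suffice, provided one reroutes the path rather than its endpoint. The paper takes a non-separating wall $W$ crossed by $P$ (each such wall is crossed once and must meet $A$ or $B$, say $B$), chooses $W$ to be the one whose crossing is \emph{closest to} $b$, and considers the terminal subpath $P^\prime$ after the crossing edge $e_1$, concatenated with a minimal edge path $Q$ inside $B$ (via Lemma~\ref{edge path convex}) to an edge of $B$ crossing $W$. The innermost choice of $W$ guarantees $P^\prime Q$ crosses no wall twice, hence is minimal, hence by Corollary~\ref{boundary N(W)} lies entirely in one boundary component $W\times\{1\}$ of $N(W)$; Lemma~\ref{wall subcomplex edge} then shows the edge $e_2$ at $b$ crossing $W$ lies in $B$, and sliding the whole subpath $e_1P^\prime$ across the carrier to the parallel path in $W\times\{0\}$ produces a path from $A$ to $B$ ending at the far endpoint of $e_2\in B$ that is shorter by exactly $w(W)$ --- a direct contradiction with minimality. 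No other walls can interfere precisely because the entire terminal subpath is trapped in $N(W)$, which is the phenomenon your objection (sliding an endpoint across a single edge of $A$ and recomputing the distance to $B$) does not capture. So the local contradiction you claim does not exist in fact does, and your proposal replaces the one construction that can be carried out with the paper's tools by appeals to facts that are either unproven here or proved later from this very lemma.
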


\begin{proof}
Let $P$ be a minimal length edge path from $A$ to $B$, and say the endpoints of $P$ are $a \in A$ and $b \in B$.

Since $P$ is minimal between $a$ and $b$, each wall that it crosses is crossed exactly once, by Theorem 4.13 of \cite{S}. 
Furthermore $P$ must cross each wall that separates $A$ from $B$. To finish the proof, we need only show that $P$ crosses \emph{only} walls that separate $A$ from $B$.

Suppose for contradiction that $P$ crosses some wall $W$ that does not separate $A$ from $B$. Since $P$ crosses $W$ only once, and yet $W$ is not separating, it must be the case that $W$ intersects either $A$ or $B$; let's say $B$. If there is more than one such wall, choose $W$ to be the one whose intersection with $P$ is closest to $b$, let $e_1$ be the edge of $P$ that crosses $W$, and let $P^\prime$ be the subpath of $P$ that connects $e_1$ and $b$ (see Figure~\ref{fig:3}). 
\begin{figure}[htbp]
  \centering
  \includegraphics[width=3in]{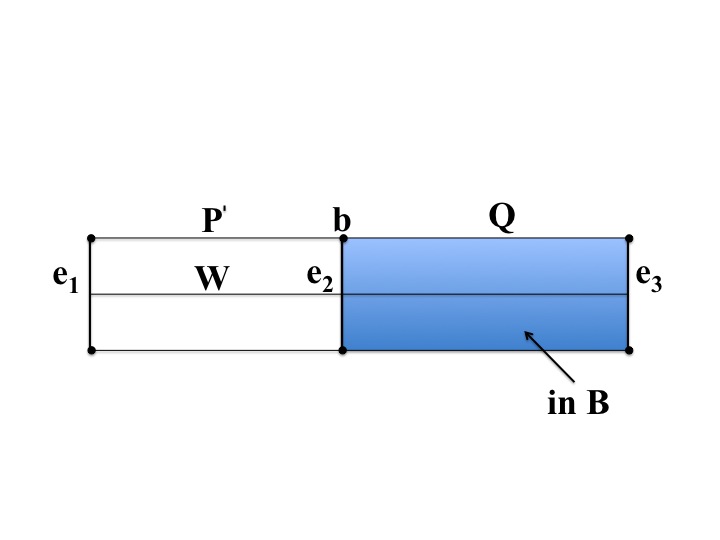}
  \caption{Illustration for Lemma~\ref{min2}.}
  \label{fig:3}
\end{figure}
Of all the edges in $B$ that cross $W$, let $e_3$ be one at a minimal distance from $b$ and take a minimal edge path $Q$ from $b$ to $e_3$ (note that path $Q$ does not cross wall $W$). We'll see in a moment that in fact then $b$ is an endpoint of $e_3$.

Note that by choice of $P^\prime$ and $Q$, path $P^\prime Q$ does not cross wall $W$.

Also, path $P^\prime Q$ is minimal, since it crosses no wall more than once. This is because no wall intersects $P^\prime$ more than once (by assumption about $P$), no wall intersects $Q$ more than once (since $Q$ is a minimal edge path between its endpoints), and no wall crosses both $P^\prime$ and $Q$, by choice of $W$ and the fact that $Q \subset B$, by Lemma~\ref{edge path convex}. 

We see that path $P^\prime Q$ is minimal and does not cross wall $W$ but connects edges $e_1$ and $e_3$, which do cross wall $W$. Thus by Lemma~\ref{boundary N(W)}, we can assert without loss of generality that $P^\prime Q$ lies entirely in $W \times \{1\}$.

Now let $e_2$ be the edge that crosses $W$ and has endpoint $b\in B$ (we know such an edge exists since $b \in P^\prime Q \subset W \times \{1\}$). Recall that wall $W$ intersects $B$, so by Lemma~\ref{wall subcomplex edge}, edge $e_2$ also lies in $B$ (and so $e_3=e_2$).

But then we can modify path $P$ by replacing its final subpath $e_1P^\prime$, which crosses $W$ and then stays in $N(W)\times \{1\}$ until it reaches edge $e_2 \subset B$, with the edge path that does not cross $W$ but instead stays in $N(W)\times \{0\}$ until reaching edge $e_2$. We thus form an edge path from $A$ to $B$ which is strictly shorter than path $P$ (shorter by the width of $W$), so $P$ is not minimal. This is a contradiction; thus $P$ crosses only separating walls.

\end{proof}

Lemma~\ref{min2}, along with our opening discussion, gives us the following proposition.

\begin{proposition}
\label{min}
An edge path $P$ between convex subcomplexes is minimal if and only if it crosses only separating walls, each exactly once. Equivalently, $P$ is minimal if and only if no wall crosses $P$ more than once and no wall intersects both $P$ and one of the subcomplexes.
\end{proposition}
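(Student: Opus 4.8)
The plan is to obtain the first biconditional by assembling the two pieces already in hand, and then to prove the ``equivalently'' clause as a separate, purely combinatorial equivalence of conditions on walls. Write condition (i): \emph{$P$ crosses only separating walls, each exactly once}. The forward direction ($P$ minimal $\Rightarrow$ (i)) is exactly Lemma~\ref{min2}. For the reverse, I would recall from the opening discussion that any edge path from $A$ to $B$ must cross every wall separating $A$ from $B$, so its length is at least $\sum_{W \in S_{A,B}} w(W)$, where $S_{A,B}$ denotes the set of separating walls. Since the length of an edge path equals the sum of the widths of the walls it crosses, counted with multiplicity, any path satisfying (i) realizes this lower bound and is therefore minimal. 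This yields ``$P$ minimal $\iff$ (i).''

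It remains to show that (i) is equivalent to condition (ii): \emph{no wall crosses $P$ more than once, and no wall intersects both $P$ and one of the subcomplexes}. The observation underlying both directions is that, because a wall meets the $1$-skeleton only at midpoints of its dual edges, ``$W$ intersects $P$'' and ``$W$ crosses $P$'' are the same condition; moreover a separating wall $W$ satisfies $A \cap W = B \cap W = \emptyset$ by definition. For (i) $\Rightarrow$ (ii): the first clause of (ii) is immediate, and if some wall met both $P$ and, say, $A$, it would cross $P$, hence be separating by (i), hence miss $A$, a contradiction.

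For (ii) $\Rightarrow$ (i): the first clause of (ii) already gives ``each crossed wall exactly once,'' so I need only show that every wall $W$ crossing $P$ separates $A$ from $B$. Suppose not. Either $W$ misses both $A$ and $B$, or $W$ meets one of them. In the first case $A$ and $B$ lie in the same component of $X \setminus W$, so the endpoints $a \in A$ and $b \in B$ of $P$ lie on the same side of $W$; since crossing $W$ toggles sides and $X \setminus W$ has exactly two components, $P$ must cross $W$ an even number of times, contradicting that it crosses $W$ exactly once. In the second case $W$ meets a subcomplex while also crossing $P$, directly contradicting the second clause of (ii). Hence every crossed wall separates, which establishes (i).

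I expect the only genuine content to be the parity argument in the step (ii) $\Rightarrow$ (i): one must invoke the property that $X \setminus W$ has exactly two components to conclude that an edge path whose endpoints lie on the same side of $W$ crosses $W$ an even number of times, and then combine this with the single-crossing hypothesis to exclude non-separating walls that happen to miss both subcomplexes. Everything else is bookkeeping resting directly on Lemma~\ref{min2}, the opening length computation, and the definition of ``separating,'' so the main obstacle is really just making the side-toggling/parity reasoning precise for edge paths.
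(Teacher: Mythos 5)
Your proposal is correct and follows the paper's route exactly: the paper derives the proposition in one line from Lemma~\ref{min2} (minimal $\Rightarrow$ only separating walls, each once) together with the opening discussion (the length of an edge path is the sum of the widths of the crossed walls with multiplicity, so a path crossing only separating walls, each exactly once, realizes the lower bound $\sum_{W\in S_{A,B}} w(W)$ and is minimal). Your additional parity argument for the ``equivalently'' clause --- using that a wall meets an edge path only at midpoints of dual edges, that $X\setminus W$ has exactly two components, and that convexity makes $A$ and $B$ connected so the case analysis is exhaustive --- is a sound and careful justification of an equivalence the paper treats as immediate.
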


For any two convex subcomplexes $A,B$ of $X$, define $d_1(A, B)$ to be the length of a minimal length edge path joining $A$ and $B$; then Proposition~\ref{min} implies that $d_1(A,B)=\sum_{W\in S_{A,B}}w(W)$, where $S_{A,B}$ is the set of walls that separate $A$ from $B$.

We now expand our notion of edge path ($d_1$) distance so as to define the $d_1$ distance between any two points of $X$, not just between vertices (or subcomplexes). We say that a path is a \emph{type 1 path} if its intersection with each rectangular cell is piecewise linear with each piece parallel to an edge of the cell. Then we let $d_1(x,y)$ be the minimal length of a type 1 path from $x$ to $y$. Note that if $x,y$ are vertices, then this definition of $d_1(x,y)$ agrees with the original (which says that $d_1(x,y)$ is the length of a minimal \emph{edge path} from $x$ to $y$). 

The next lemma will allow us to define $l_1$ length functions in a way that makes them both useful and relatively easy to find.

First, a definition: call a bi-infinite type $1$ path a \emph{type 1 axis for $g$} if it is $g$-invariant and \emph{geodesic}, that is, minimal (among type $1$ paths) between any two points of the path. Note that a type 1 axis for $g$ can cross no wall more than once. 

Also,  if $x\in \m(g)$ and $Q$ is a type 1 geodesic from $x$ to $g.x$, then the bi-infinite path $Q^\infty$ formed by the $g^k$-translates of $Q$ is a type 1 axis for $g$. 

\begin{lemma}
\label{edge path axes}
Let the group $G$ act by isometries on the CAT(0) rectangle complex $X$. Let $g\in G$. Then 
\begin{enumerate}
\item Any two type 1 axes $Q_1^\infty, Q_2^\infty$ for $g$ cross the same set of walls.
\item If $y_j$ is a point of $Q_j^\infty$ ($j=1,2$), then $d_1(y_1, g.y_1)=d_1(y_2, g.y_2)$.
\end{enumerate}
\end{lemma}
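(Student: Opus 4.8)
The plan is to prove (1) first and then deduce (2) from it, exploiting that a type $1$ axis crosses each wall at most once. Throughout I parametrize a type $1$ axis $Q^\infty$ for $g$ by arc length as $\gamma:\R\to X$; since $Q^\infty$ is geodesic, $\gamma$ is an isometric embedding for $d_1$, and since $Q^\infty$ is $g$-invariant, $g$ restricts to a $d_1$-isometry of this line, hence acts as $\gamma(t)\mapsto\gamma(t+\tau)$ where $\tau=d_1(y,g.y)$ for any $y\in Q^\infty$. In the actions of interest every nontrivial $g$ is a hyperbolic isometry with no fixed point, so $\tau>0$ (the reflection case, where $g$ would fix a point of the axis, is excluded); this positivity drives the whole argument, and I will flag where it enters. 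Write $S_j$ for the set of walls crossed by $Q_j^\infty$.

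For (1) the first step is an intrinsic description of $S_j$. A wall $W$ lies in $S_j$ exactly when $Q_j^\infty$ has points on both sides of $W$. Because a type $1$ axis crosses $W$ at most once and is not contained in $W$, if it crosses $W$ then each of the two rays determined by the crossing point lies entirely on one side of $W$; applied to the discrete points $g^{N}.y_j=\gamma_j(N\tau)$ this shows that $W\in S_j$ if and only if $W$ separates $g^{-N}.y_j$ from $g^{N}.y_j$ for all sufficiently large $N$. The second step transfers this from $y_1$ to $y_2$. Let $F$ be the set of walls separating $y_1$ from $y_2$; it is finite, since any finite-length type $1$ path between them crosses only finitely many walls, and the walls separating $g^{N}.y_1$ from $g^{N}.y_2$ are exactly $g^{N}.F$. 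Given $W\in S_1$, I claim $g^{m}.W\in F$ for only finitely many integers $m$: otherwise finiteness of $F$ forces $g^{k}.W=W$ with $k\neq0$, and since $g^{k}$ translates $\gamma_1$ by $k\tau\neq0$ (here $\tau>0$ is essential) it would move the unique point of $Q_1^\infty\cap W$ to a distinct point of $Q_1^\infty\cap g^{k}.W=Q_1^\infty\cap W$, a contradiction. Hence for all large $N$ the wall $W$ separates neither $g^{N}.y_1$ from $g^{N}.y_2$ nor $g^{-N}.y_1$ from $g^{-N}.y_2$, so $g^{N}.y_2$ and $g^{-N}.y_2$ lie on the same sides of $W$ as $g^{N}.y_1$ and $g^{-N}.y_1$, that is, on opposite sides of $W$. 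Therefore $W\in S_2$, and by symmetry $S_1=S_2=:S$.

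For (2) I use that $\langle g\rangle$ acts \emph{freely} on $S$ — this is precisely the no-fixed-wall fact just established — so $S$ splits into $\langle g\rangle$-orbits each isomorphic to $\Z$, and the walls crossed by the fundamental subpath $\gamma_j([0,\tau))$ form a set $S_0^{(j)}$ meeting each orbit in exactly one wall. Since $Q_j^\infty$ is geodesic, $d_1(y_j,g.y_j)$ is the length of $\gamma_j([0,\tau])$, namely $\sum_{W\in S_0^{(j)}}w(W)$. As $g$ is an isometry, $w(g.W)=w(W)$, so $w$ is constant along each orbit; hence this sum equals one width per $\langle g\rangle$-orbit of $S$, a quantity depending only on $S$ and not on the chosen axis. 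Thus $d_1(y_1,g.y_1)=d_1(y_2,g.y_2)$.

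The main obstacle is the freeness input — that no nontrivial power of $g$ stabilizes a wall crossed by its axis. It is what simultaneously makes the finite-exception transfer in (1) valid and guarantees in (2) that a fundamental subpath meets each orbit of crossed walls exactly once, so that the width sum is orbit-counting and therefore axis-independent. This input rests entirely on $\tau>0$; the degenerate case in which $g$ fixes an axis pointwise does not arise for the hyperbolic isometries occurring in our actions.
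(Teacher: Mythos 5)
Your proof is correct and reaches both conclusions, but it is organized around genuinely different mechanisms than the paper's argument. For (1), the paper compares the two axes through the CAT(0) geodesic $S=[y_1,y_2]$: assuming a wall $W$ is crossed by $Q_1$ but not by $Q_2^\infty$, it shows $W$ separates the negative ray of $Q_1^\infty$ from all of $Q_2^\infty$, so every translate $g^{-i}.S$ crosses $W$, hence $S$ crosses the infinitely many pairwise distinct walls $g^i.W$ --- contradicting the finite length of $S$. You instead work with the finite set $F$ of walls separating $y_1$ from $y_2$ and run a pigeonhole argument: the $\langle g\rangle$-orbit of a crossed wall meets $F$ only finitely often (else some $g^k$ with $k\neq 0$ would stabilize $W$ while translating the axis by $k\tau\neq 0$), so for large $N$ the points $g^{\pm N}.y_2$ inherit the sides of $W$ occupied by $g^{\pm N}.y_1$, forcing $Q_2^\infty$ to cross $W$. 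For (2), the paper matches each wall crossed by $Q_1$ with a distinct translated wall crossed by $Q_2$, obtains $l(Q_2)\geq l(Q_1)$, and finishes by symmetry; you instead identify both translation distances with a single intrinsic invariant --- one width per $\langle g\rangle$-orbit of the common wall set --- using freeness of the $\langle g\rangle$-action on that set, which makes axis-independence transparent and eliminates the symmetry step. The essential inputs are shared: that the translates $g^i.W$ of a crossed wall are pairwise distinct (the paper deduces this from the single-crossing property of $Q_1^\infty$; you deduce it from $\tau>0$), and that the length of a type 1 geodesic equals the sum of the widths of the walls it crosses (the paper invokes this parenthetically; you use it on the same footing, also without proof). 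One point is to your credit: you flag explicitly that everything rests on $g$ translating its axes nontrivially. The lemma as literally stated, for arbitrary $g$ in an arbitrary isometric action, does fail without this --- for $g$ the identity, every bi-infinite type 1 geodesic is a $g$-invariant axis, and two such need not cross the same walls --- and the paper makes the same assumption silently when it writes $Q_j^\infty$ as the union of the $g^k$-translates of $Q_j$; since in all applications $g$ is hyperbolic, neither argument is harmed, but your version makes the needed hypothesis visible.
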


\begin{proof}

Let $S=[y_1,y_2]$ be the geodesic from $y_1$ to $y_2$. Let $Q_1\subset Q_1^\infty$ be the path from $y_1$ to $g.y_1$, and let $Q_2 \subset Q_2^\infty$ be the path from $y_2$ to $g.y_2$ (see  Figure~\ref{fig:4}). 
\begin{figure}[htbp]
  \centering
  \includegraphics[width=3in]{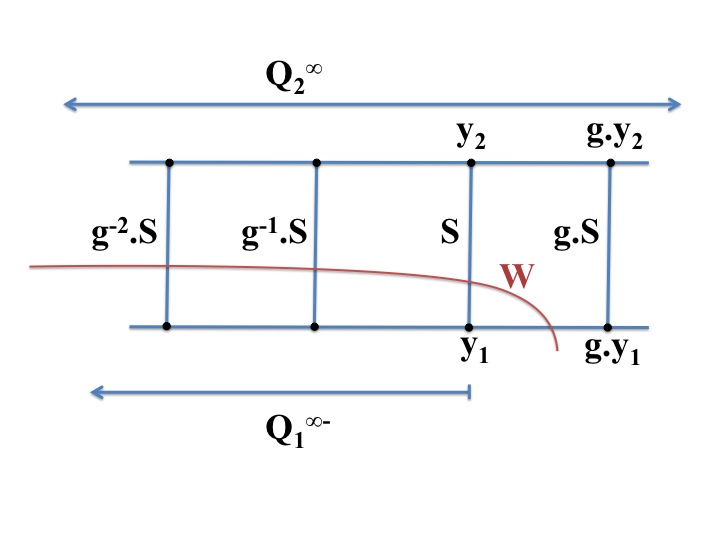}
  \caption{Illustration for Lemma~\ref{edge path axes}.}
  \label{fig:4}
\end{figure}

By symmetry and the fact that both $Q_1^\infty$ and $Q_2^\infty$ are both $g$-invariant, to show (1) it suffices to show that all walls crossed by the type 1 path $Q_1$ are also crossed by $Q_2^\infty$. 

Suppose $Q_1$ crosses wall $W$, and suppose for contradiction that ${Q_2^\infty}$ does not cross wall $W$. Since $Q_1\subset Q_1^\infty$ crosses no wall more than once, $W$ separates its endpoints $y_1$ and $g.y_1$. So the continuous path $S\cdot Q_2  \cdot g.\overline{S}$ from $y_1$ to $g.y_1$ must also cross $W$. Now since $Q_2 \subset Q_2^\infty$ does not cross $W$, either $S$ or $g.S$ must. Without loss of generality, suppose $S$ crosses $W$; since $S$ is geodesic this means that $W$ must separate its endpoints $y_1$ and $y_2$. Note that the ray ${Q_1^\infty}^-$ consisting of the $g^{-i}$-translates of $Q_1$, $i> 0$, does not intersect wall $W$ and contains vertex $y_1$. Also $Q_2^\infty$ does not intersect $W$ and contains $y_2$. Thus $W$ separates $Q_2^\infty$ from ${Q_1^\infty}^-$. Note that for each $i>0$, $g^{-i}.S$ connects ${Q_1^\infty}^-$ to $Q_2^\infty$ and so crosses wall $W$. Thus $g^{-i}.S$ crosses $W$ for all $i>0$; this implies that $S$ crosses $g^i.W$ for all $i>0$. Furthermore the $g^i.W$ are all distinct walls, since for each $i$,  $g^i.Q_1 \subset Q_1^\infty$ crosses $g^i.W$, and since $Q_1^\infty$ crosses no wall more than once. But this is a contradiction, since $S$ has finite length. So $Q_2^\infty$ must cross $W$. This completes the proof of (1).

Continuing with the proof of (2), note that $Q_2^\infty$ is the union of the $g$-translates of ${Q_2}$; let $g^m.{Q_2}$ be the translate that crosses wall $W$. Then path $Q_2$ crosses wall $g^{-m}.W$. 

The above shows that if $W_1, W_2, \dots, W_k$ are the walls crossed by $Q_1$, then walls $g^{-m_1}.W_1$, $g^{-m_2}.W_2$, $\dots$, and $g^{-m_k}.W_k$  are crossed by $Q_2$, for some integers $m_i$. Furthermore these translated walls are all distinct, and the width of a translated wall is just the width of the original. Thus $l(Q_2)\geq \sum_{i=1}^k w(g^{-m_k}.W_k) = \sum_{i=1}^k w(W_i)=l(Q_1)$ (here we are using that the length of a type 1 geodesic from a point to its $g$-translate will be the sum of the widths of the separating walls.)

Since $Q_1$ is minimal between its endpoints $y_1$ and $g.y_1$, its length is $d_1(y_1, g.y_1)$, and similarly $l(Q_2)$ is $d_1(y_2, g.y_2)$; thus we have $d_1(y_2, g.y_2)\geq d_1(y_1, g.y_1)$.

By symmetry, then, we have that $d_1(y_2, g.y_2)=d_1(y_1, g.y_1)$, as desired. 

\end{proof}

The distance $d_1(y, g.y)$ (where $y$ is a point on a type 1 axis for $g$) will be called the \emph{$l_1$-translation distance} along the axis. Note that it does not depend on the choice of the axis or the point $y$, by Lemma~\ref{edge path axes}.

We use translation distances to define $l_1$ length functions. Explicitly, given an action $\alpha$, we define $l^\alpha_1(g)$ to be the $l_1$ translation distance along any type 1 axis for $g$. Note that $l_1^\alpha(g)$ is defined for all $g$, since type 1 axes always exist. In particular every point $x$ of $\m(g)$ is on a type 1 axis for $g$, so that $l_1^\alpha(g)=d_1(x,g.x)$ for any point $x$ of $ \m(g)$.

As observed above, any $g$-invariant, bi-infinite edge path that crosses no wall more than once is an $l_1$-axis, so $l^\alpha_1(g)=d_1(x, g.x)$ for any vertex $x$ of the path.

This fact will allow us to measure the $d_1$ distance between minsets using any $g$ and $h$ for which the associated natural $gh$-invariant bi-infinite edge path crosses no wall more than once. 

Note that if $J\subset \G$ is a join of $n$ discrete subgraphs $\G_1, \G_2, \dots , V_n$ with vertex sets $V_1, V_2, \dots , V_n$, 
then $A_J$ is a product of free groups: $A_J=F(V_1)\times F(V_2)\times \dots \times F(V_n)$. In particular, any $n$-tuple of elements $h_1 \in F(V_1), h_2 \in F(V_2), \dots, h_n \in F(V_n)$ generates a $\Z^n$-subgroup. The converse is not quite true: that is, it is not true that every $\Z^n$-subgroup of $\AG$ is contained in some $A_J$, where $J$ is a join of $n$ discrete graphs.  However, something very close is true: every $Z^n$-subgroup is \emph{conjugate} to a subgroup of some $A_J$, as we'll see in Lemma~\ref{4.2}.

\begin{definition}
We say that a subgroup $H < \AG$ is a \emph{basic $\Z^n$-subgroup} if $H$ is a maximal  $\Z^n$-subgroup and is contained in $A_J \cong F(V_1)\times F(V_2)\times \dots \times F(V_n)$ for some join $J \subset \Gamma$ of $n$ non-empty discrete graphs $\G_1, \G_2, \dots , \G_n$ with vertex sets $V_1, V_2, \dots , V_n$. 
\end{definition}

\begin{lemma}
\label{4.2}
Let $\G$ be a simplicial graph and let $\AG$ be the corresponding right-angled Artin group. Then 
\begin{enumerate} 
\item Any abelian subgroup $H< A_\G$ is conjugate to a subgroup of $\langle a_1 \rangle \times \dots \times \langle a_m \rangle <  A_{\G_1} \times A_{\G_2}\times \dots \times A_{\G_m}=A_J$, where $J\subset \Gamma$ is the join of  graphs $\G_1, \dots, \G_m$ (for some $m$).  
\item If $\G$ is homogeneous of dimension $n$ and $H$ is a maximal $\Z^n$-subgroup, then $H$ is conjugate to $\langle a_1 \rangle \times \dots \times \langle a_n \rangle <  A_J$, where $J\subset \Gamma$ is the join of non-empty discrete graphs $\G_1, \dots, \G_n$; in particular, $H$ is conjugate to a basic $\Z^n$-subgroup.
\end{enumerate}
\end{lemma}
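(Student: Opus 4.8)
The plan is to prove (1) by induction on the number of vertices of $\G$, using the classical description of centralizers of cyclically reduced elements (due to Servatius), and then to deduce (2) from (1) by a rank count together with a clique-number estimate for joins. For (1), the trivial subgroup is immediate, so assume $H$ is nontrivial and pick $1 \neq g \in H$. The normal-form discussion in the Preliminaries lets us choose $b \in \AG$ so that $g' := b^{-1}gb$ is cyclically reduced; setting $H' := b^{-1}Hb$ we have $g' \in H'$, and since $H'$ is abelian, $H' \le C(g')$. Here I would invoke the centralizer description: if $S = \mathrm{supp}(g')$ and the full subgraph $\Gamma_S$ has finest join decomposition $\Gamma_S = \Gamma_{S_1} * \cdots * \Gamma_{S_k}$, with corresponding factorization of $g'$ into commuting pure factors $z_i^{d_i} \in A_{\Gamma_{S_i}}$, then
$$ C(g') \;=\; \langle z_1 \rangle \times \cdots \times \langle z_k \rangle \times A_{lk(S)}, $$
where $lk(S)$ is the set of vertices adjacent to every vertex of $S$ (and not in $S$). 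Two features drive the induction: first, $S$ is joined to $lk(S)$, and since $S \neq \emptyset$ and $S \cap lk(S) = \emptyset$, the graph on $lk(S)$ has strictly fewer vertices than $\G$; second, each $z_i$ commutes with all of $A_{lk(S)}$.

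Next I would let $\pi \colon C(g') \to A_{lk(S)}$ be the projection killing the cyclic factors. Then $\pi(H')$ is an abelian subgroup of the smaller RAAG $A_{lk(S)}$, so by the inductive hypothesis there is $y \in A_{lk(S)}$ conjugating $\pi(H')$ into $\langle b_1 \rangle \times \cdots \times \langle b_l \rangle$ with $b_j \in A_{T_j}$ and $T_1 * \cdots * T_l$ a join inside $lk(S)$. Because $y$ commutes with each $z_i$, conjugation by $y$ fixes the cyclic factors and acts only on the $A_{lk(S)}$-coordinate; writing each element of $H'$ as (cyclic part)$\,\cdot\,$($A_{lk(S)}$-part) and conjugating, one checks that
$$ y H' y^{-1} \;\le\; \langle z_1 \rangle \times \cdots \times \langle z_k \rangle \times \langle b_1 \rangle \times \cdots \times \langle b_l \rangle. $$
Thus $yb^{-1}Hby^{-1}$ lies in this product, and it remains to verify that the union of supports is a genuine join: the $S_i$ are pairwise joined (join decomposition of $\Gamma_S$), each $S_i$ is joined to each $T_j$ since $T_j \subseteq lk(S)$, and the $T_j$ are pairwise joined by the inductive hypothesis. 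Hence these $k+l$ subgraphs form a join $J$, and $H$ is conjugate into $A_J$ in the required form, proving (1) with $m = k+l$.

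For (2), suppose $\G$ is homogeneous of dimension $n$ and $H \cong \Z^n$ is a maximal $\Z^n$-subgroup. By (1) a conjugate of $H$ lies in $A := \langle a_1 \rangle \times \cdots \times \langle a_m \rangle$, which is $\cong \Z^m$ (the $a_i$ are nontrivial and lie in distinct direct factors $A_{\Gamma_i}$ of the join $J = \Gamma_1 * \cdots * \Gamma_m$). A short induction shows a homogeneous graph of dimension $n$ has clique number exactly $n$; since the maximal rank of a free abelian subgroup of $\AG$ equals this clique number (via the Flat Torus Theorem and the dimension of $\tilde S_\G$, as already used in the excerpt), we get $m \le n$, while $\Z^n \hookrightarrow \Z^m$ forces $m \ge n$, so $m = n$. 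As $A \cong \Z^n$ contains a conjugate of $H$ that is itself a $\Z^n$, maximality of $H$ forces equality, so $H$ is conjugate to $A$. Finally, the clique number of a join is the sum of the clique numbers of its factors, so $\sum_i \omega(\Gamma_i) = \omega(J) \le n$ with $n$ nonempty factors; hence each $\omega(\Gamma_i) = 1$, i.e.\ each $\Gamma_i$ is discrete, exhibiting $H$ as conjugate to a basic $\Z^n$-subgroup.

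The main obstacle is the bookkeeping in the inductive step of (1): one must ensure that conjugating the projection $\pi(H')$ by $y \in A_{lk(S)}$ lifts to a conjugation of $H'$ that leaves the cyclic factors $\langle z_i \rangle$ untouched (this is precisely where the join condition between $S$ and $lk(S)$ is used), and that the various supports assemble into one honest join rather than merely a collection of commuting subgraphs. The single external input is the centralizer theorem for cyclically reduced elements; a more geometric alternative would run through the minsets of the $\Z^k$-action on $\tilde S_\G$ and the Flat Torus Theorem, but the centralizer description makes the conjugacy statement most transparent.
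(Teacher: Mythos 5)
Your proposal is correct and follows essentially the same route as the paper: induction on the number of vertices of $\G$ using Servatius's centralizer theorem for a cyclically reduced element, projection onto the link factor to apply the inductive hypothesis, and for (2) the two-sided count $m=n$ plus maximality and a clique/homogeneity argument forcing the join factors to be discrete. You in fact supply slightly more detail than the paper at two points it glosses over --- verifying that the conjugator $y\in A_{lk(S)}$ commutes with the cyclic factors so the conjugation lifts, and checking that the supports assemble into an honest join --- so no gaps.
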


\begin{proof}

We first show (1) by induction on $l$, the number of vertices of $\G$. When $l=1$ there is nothing to show.

Let  $\G$ have $l>1$ vertices, and let $H< A_\G$ be abelian. Clearly $H$ is conjugate to some subgroup of $\AG$ that contains a cyclically reduced element; we now assume that $H$ itself contains the cyclically reduced element $h$.  $H$ is then contained in the centralizer $C(h)$ of $h$. We now use a theorem of Servatius \cite{Serv} that describes the centralizer $C(h)$ of a cyclically reduced element $h$ as follows. Let $supp(h)$ be the set of vertices of $\Gamma$ that appear in a minimal length word for $h$, and let $\G_{supp(h)} \subset \G$ be the subgraph spanned by $supp(h)$. Let $k$ be maximal such that $\G_{supp(h)}$ is a join of $k$ subgraphs $\G_1$, $\G_2$, $\dots$, and $\G_k$; then $A_{\G_{supp(h)}}\cong A_{\G_1} \times A_{\G_2}\times \dots \times A_{\G_k}$. Express $h$ as ${a_1}^{m_1} {a_2}^{m_2}\dots {a_k}^{m_k}$, where $a_i \in A_{\G_i}$ and $m_i$ is maximal for each $i$. Also, let $lk(h)$ be the set of vertices of $\G$ that commute with each vertex of $supp(h)$. Then $C(h)=\langle a_1, a_2, \dots, a_k, lk(h) \rangle$. So we have that $H < C(h) \cong \langle a_1 \rangle \times \dots \times \langle a_k \rangle \times A_{\G_{lk(h)}} \subset A_{\G_1} \times \dots \times A_{\G_k}\times A_{\G_{lk(h)}}=A_{J^{\prime \prime}}$, where $J^{\prime \prime}\subset \Gamma$ is the join of ${\G_1}, \dots, {\G_k}, {\G_{lk(h)}}$. 

Let $p$ be the projection onto the last factor $A_{\G_{lk(h)}}$. Then $H <  \langle a_1 \rangle \times \dots \times \langle a_k \rangle \times p(H)$. Note that $p(H)$ is an abelian subgroup of $A_{\G_{lk(h)}}$, where $\G_{lk(h)}$ has fewer than $l$ vertices. So by the inductive hypothesis, $p(H)$ is conjugate by an element of $A_{\G_{lk(h)}}$ to a subgroup of $\langle a_{k+1} \rangle \times \langle a_{k+2} \rangle \times \dots \times \langle a_{m} \rangle  <  A_{\G_{k+1}} \times \dots \times A_{\G_m}=A_{J^\prime}$, where $J^\prime \subset \G_{lk(H)}$ is the join of  graphs $\G_{k+1}, \dots, \G_m$ (for some $m$). Thus $H$ is conjugate to a subgroup of $\langle a_1 \rangle \times \dots \times \langle a_m \rangle <   A_{\G_{1}} \times \dots \times A_{\G_m}=A_J$, where $J$ is the join of $\Gamma_1, \dots, \Gamma_m$. This proves (1).

Now assume that $\G$ is homogeneous of dimension $n$ and $H$ is a maximal $\Z^n$-subgroup. 

Given these assumptions, we want to show that $m$ must equal $n$. Clearly  $m \geq n$, since $H\cong \Z^n$ is conjugate to a subgroup $H^\prime$ of $\langle a_1 \rangle \times \dots \times \langle a_m \rangle \cong \Z^{m}$. Also $m \leq n$, since $\G$ is homogeneous of dimension $n$ and contains the join of $\Gamma_1, \dots, \Gamma_m$. Thus $m = n$, and we have that $ H^\prime < \langle a_1 \rangle \times \dots \times \langle a_n \rangle< A_J $, where $J$ is the join of  $\Gamma_1, \dots, \Gamma_n$. In fact, since $H^\prime \cong \Z^n$ is maximal, $H^\prime$ is exactly $\langle a_1 \rangle \times \dots \times \langle a_n \rangle$. 

It remains to show that each $\G_i$ is discrete.

Suppose for contradiction that some $\G_j$ is not discrete, so that it contains at least one edge. Let $u, v$ be the endpoints of this edge. Then $\G$ contains the join of the $n+1$ graphs $\G_i$ ($i\neq j$), $\{u\}$, and $\{v\}$, a contradiction since $\G$ is homogeneous of dimension $n$. Thus each $\G_i$ must be discrete. 
 
This completes the proof of (2).
\end{proof}

We will say that an edge path \emph{traverses} an $n$-dimensional rectangle $R$ if the edge path passes through diagonally opposite vertices of the rectangle.

\begin{lemma}
\label{edge path crossing n-rectangle}
In an $n$-dimensional rectangle complex, a minimal edge path that begins and ends in the same component of $N(W)$ cannot traverse an $n$-dimensional rectangle. 
\end{lemma}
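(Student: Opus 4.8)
The plan is to reduce the whole statement to one fact: traversing an $n$-rectangle that lies inside $N(W)$ forces a path to cross $W$. First I record what the hypothesis provides. An edge path has vertices as endpoints, and ``begins and ends in the same component of $N(W)$'' means both endpoints lie in $N(W)$ and on the same side of $W$ (recall $X\setminus W$ has two components, and under $N(W)\cong W\times[0,w(W)]$ the wall $W$ is the midlevel $W\times\{w(W)/2\}$). Since a minimal edge path crosses only the walls that separate its endpoints, and $W$ does not separate them, $P$ does not cross $W$. Moreover $N(W)$ is convex, so by Lemma~\ref{edge path convex} the minimal path $P$, having both endpoints in $N(W)$, satisfies $P\subset N(W)$; in particular every vertex of $P$ lies in $N(W)$.

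Now I would argue by contradiction: suppose $P$ traverses some $n$-rectangle $R$, say through the diagonally opposite vertices $p$ and $q$. The first goal is to upgrade ``$P$ meets $R$'' to ``$R\subseteq N(W)$''. Both $p$ and $q$ lie on $P$, hence in $N(W)$, so the geodesic $[p,q]$ lies in the convex set $N(W)$. Because $p$ and $q$ are opposite corners of the single flat cell $R$, the straight Euclidean diagonal of $R$ from $p$ to $q$ is a local geodesic of $X$ and hence, in the CAT(0) space $X$, equals the unique geodesic $[p,q]$; its midpoint is the center $c$ of $R$. Thus $c\in N(W)$. Since $N(W)$ is a union of whole cells and $c$ is an interior point of the top-dimensional cell $R$, the only cell of $N(W)$ that can contain $c$ is $R$ itself; therefore $R\subseteq N(W)$, so $R$ has a midplane lying in $W$.

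Finally I would exploit the product structure. With $R\subseteq N(W)\cong W\times[0,w(W)]$, the cell $R$ has the form $(\text{midplane})\times[0,w(W)]$, the factor $[0,w(W)]$ being perpendicular to $W$, and $W$ meeting $R$ in its midplane at level $w(W)/2$. Diagonally opposite vertices of $R$ differ in every coordinate, in particular in the coordinate perpendicular to $W$; hence $p$ sits at level $0$ and $q$ at level $w(W)$, placing them in different components of $X\setminus W$. Any path through both $p$ and $q$ must then cross $W$, so $P$ crosses $W$, contradicting the first paragraph. This contradiction shows $P$ cannot traverse an $n$-rectangle.

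The step I expect to be the main obstacle is establishing $R\subseteq N(W)$ rather than merely $R\cap P\neq\emptyset$: a priori $P$ could pass through two corners of a rectangle that pokes out of $N(W)$, in which case the clean product coordinates of the last paragraph would not apply. The center-of-$R$ argument — identifying the CAT(0) geodesic $[p,q]$ with the Euclidean diagonal of the flat cell, and then using that $N(W)$ is a union of full cells to promote ``$c\in N(W)$'' to ``$R\subseteq N(W)$'' — is the delicate point, and it is exactly where geodesic uniqueness in the CAT(0) space and the cell-complex structure must be invoked with care.
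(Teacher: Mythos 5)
Your proof is correct, but it takes a genuinely different route from the paper's. The paper deduces the lemma in two lines from Corollary~\ref{boundary N(W)}: the minimal path must lie entirely in one boundary copy $W\times\{0\}$ or $W\times\{w(W)\}$ of $N(W)$, and this is a convex subcomplex of dimension $n-1$, so it cannot contain an $n$-dimensional rectangle. You instead confine the path only to the full ($n$-dimensional) neighborhood $N(W)$, using Lemma~\ref{edge path convex} together with the fact that minimal edge paths cross only separating walls, and then extract the contradiction from the product structure: a traversed $n$-rectangle must itself lie in $N(W)$, so its diagonally opposite vertices sit at levels $0$ and $w(W)$, on opposite sides of $W$, forcing the path to cross $W$ after all. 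What the paper's route buys is brevity plus a stronger intermediate conclusion (confinement to a single copy of $W$, which is the form actually exploited in the proof of Theorem~\ref{a1}); what your route buys is that it makes explicit a point the paper's dimension count leaves implicit: since ``traverse'' only requires passing through two diagonally opposite vertices, one must still rule out a rectangle that merely has those two corners in the carrier of the path while poking out of it. Your center-of-$R$ argument --- the CAT(0) geodesic $[p,q]$ coincides with the Euclidean diagonal of the flat cell, its midpoint is interior to $R$, and a subcomplex of an $n$-dimensional complex containing an interior point of an $n$-cell must contain the whole cell --- is exactly the patch needed, and the same argument would also complete the paper's version verbatim with $W\times\{1\}$ in place of $N(W)$, where the dimension count then gives an immediate contradiction. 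One small reading note: $N(W)$ is connected, so ``the same component of $N(W)$'' in the statement must be read, as you did, as the same boundary component (equivalently, the same side of $W$); for edge-path endpoints the two readings agree, since every vertex of $N(W)$ lies in $W\times\{0\}$ or $W\times\{w(W)\}$.
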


\begin{proof}
By Corollary~\ref{boundary N(W)}, the minimal edge path $P$ lies entirely in a boundary component of $N(W)$, say $W \times \{1\}$. Since $W \times \{1\}$ is a convex subcomplex of dimension $n-1$, it cannot contain an $n$-dimensional rectangle.
\end{proof}

\begin{lemma}
\label{m(g) subcomplex}
Let $\G$ be homogeneous of dimension 2. Let $g\in \AG$ such that the centralizer of $g$ is non-cyclic. Then $\m(g)$ is a convex subcomplex of $X$.
\end{lemma}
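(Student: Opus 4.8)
The plan is to show that $\m(g)$, which we already know is convex, is a union of $2$-dimensional flats each of which is a subcomplex; since a union of subcomplexes is again a subcomplex, this will finish the proof.

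First I would reduce to the case that $g$ is cyclically reduced: since the action is simplicial, $\m(hgh^{-1})=h.\m(g)$, and the image of a subcomplex under a simplicial isometry is again a subcomplex, so it suffices to treat one representative of each conjugacy class. As the centralizer $C(g)$ of $g$ is non-cyclic, Servatius' description of centralizers (used in the proof of Lemma~\ref{4.2}) yields an element $h$ commuting with $g$ with $\langle g,h\rangle\cong\Z^2$. Since $g$ acts as a hyperbolic isometry, $\m(g)\cong Y\times\R$ with $Y$ a convex subspace; the flat $\m(\langle g,h\rangle)\cong\E^2$ sits inside $\m(g)$, forcing $Y$ to be non-degenerate. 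Because $X$ is $2$-dimensional, $Y$ is a $1$-dimensional convex subset, i.e.\ a tree $T$, so $\m(g)\cong T\times\R$ is genuinely $2$-dimensional.

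Next I would analyze the flats inside $\m(g)$. Every element of $C(g)$ commutes with $g$, hence preserves $\m(g)$ and its splitting, acting as an isometry of $T$ times a translation of $\R$; this gives a homomorphism $\rho\colon C(g)\to\mathrm{Isom}(T)$. For $h\in C(g)$ with $\rho(h)$ hyperbolic on $T$, the subgroup $\langle g,h\rangle$ is a $\Z^2$-subgroup, and a direct computation in the product metric shows $\m(\langle g,h\rangle)=L_h\times\R$, where $L_h=\mathrm{axis}_T(\rho(h))$. By the Flat Torus Theorem together with $\dim X=2$, each $\m(\langle g,h\rangle)\cong\E^2$ is a flat subcomplex of $X$ contained in $\m(g)$. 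Thus $\m(g)\supseteq\bigcup_h (L_h\times\R)$, and the whole question reduces to showing equality, i.e.\ that $\bigcup_h L_h=T$: every point of the tree $T$ lies on the axis of some $\rho(h)$.

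The main obstacle is exactly this covering statement, which says that the induced $C(g)$-action on $T$ is \emph{minimal} in the $1$-dimensional sense (every point lies on an axis, equivalently $T$ has no proper invariant subtree). I would deduce it from the minimality of the action on $X$. Given $x=(t,s)\in\m(g)$, minimality of the $X$-action provides a $\Z^2$-subgroup $K$ with $x\in\m(K)$, where $\m(K)\cong\E^2$ is a flat subcomplex through $x$. The crux is to show that the axis $\{t\}\times\R$ of $g$ through $x$ is one of the two grid directions of this flat; granting this, the flat has the form $L\times\R$ with $t\in L$, so $t$ lies on an axis and $x$ lies in a flat contained in $\m(g)$. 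I expect this step to require a local argument in the link of $x$: both $\m(g)$ and $\m(K)$ are $2$-dimensional convex sets through $x$ in the $2$-dimensional complex $X$, and I would argue that their tangent data at $x$ must share the direction of $g$'s axis, using that $g$ stabilizes $\m(g)$ and translates along the $\R$-factor. Once the covering $\bigcup_h L_h=T$ is established, the identity $\m(g)=\bigcup_h(L_h\times\R)$ exhibits $\m(g)$ as a union of subcomplexes, hence a subcomplex, completing the proof.
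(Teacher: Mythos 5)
Your setup is sound and parallels the paper's opening moves: the reduction to a $\Z^2$-subgroup through $g$, the splitting $\m(g)\iso T\times\R$ with $T$ a nondegenerate tree, and the identification $\m(\langle g,h\rangle)=L_h\times\R$ for $h\in C(g)$ acting hyperbolically on $T$ can all be justified. But the covering claim $\bigcup_h L_h=T$, which you correctly isolate as the crux, has a genuine gap, and the mechanism you propose cannot close it. Minimality supplies, for $x\in\m(g)$, \emph{some} $\Z^2$-subgroup $K$ with $x\in\m(K)$; nothing links $K$ to $g$, and the $g$-axis through $x$ need not be a grid direction of $\m(K)$. Concretely, let $\G$ be the path with vertices $u,v,w$ and edges $uv$, $vw$, let $X=\tilde{S}_\G$, let $g=u$, and let $x=a.x_0$ be any vertex of $\m(u)$. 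Then $x$ lies in the flat $a.\m(\langle v,w\rangle)=\m(a\langle v,w\rangle a^{-1})$, whose grid directions are the $v$- and $w$-directions; minimality at $x$ may perfectly well be witnessed by this flat, and your crux claim is false for it, since the $u$-axis direction at $x$ is a third direction in the link. What you actually need is the \emph{existence} of a flat through $x$ adapted to $g$ (contained in $\m(g)$, with the $g$-axis as a gridline), and that is essentially the content of the lemma itself, not something extractable from an arbitrary witness of minimality by a local link computation. A second, independent problem: $\bigcup_h L_h=T$ fails automatically at any valence-one point of $T$, since a bi-infinite axis cannot pass through a leaf, and your outline contains no argument excluding leaves of $T$; so the proposed decomposition of $\m(g)$ into flats is itself in doubt. (There are also smaller, patchable omissions: even granting the crux, to conclude that $t$ lies on the axis of some $h\in C(g)$ you would still need to promote the gridline isometries of $K$ to elements commuting with $g$, e.g.\ via a shared-axis argument, freeness, and uniqueness of roots in $\AG$; none of this is indicated.)

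For comparison, the paper's proof avoids all of this: it never decomposes $\m(g)$ into flats and instead works directly with the cell structure, using an ingredient your proposal never invokes, namely that $g$ acts \emph{simplicially}. If $T$ branches at $t^*$, the branch line $\{t^*\}\times U$ must be an edge path, which grid-aligns the splitting, so $T\times\{u^*\}$ lies in the 1-skeleton for any vertex $(t^*,u^*)$ of $\m(g)$; the only remaining way for $\m(g)$ to fail to be a subcomplex is a leaf $t^\prime$ of $T$ with $(t^\prime,u^*)$ not a vertex, which would force some closed rectangle of $X$ to meet $\m(g)$ in a nondegenerate proper strip---impossible for a simplicial action. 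If $T$ does not branch, $\m(g)\iso\E^2$ and is a subcomplex outright. In short, the leaf/partial-cell configuration on which your covering claim founders is exactly the case the paper's argument is built to eliminate.
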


\begin{proof}

$\m(g)$ is convex by Bridson and Haefliger \cite{BH}. It remains to show that it is a subcomplex. 

By a theorem in Bridson and Haefliger \cite{BH}, $\m (g)\cong T \times U$, where $T$ is a tree and $U \cong \R$. Since the centralizer of $g$ is non-cyclic, $g$ is an element of a $\Z^2$-subgroup $A$ of $\AG$. By the Flat Torus Theorem, $\m(A)\cong \E^2$, so $\E^2 \subset \m(g)$, so $T$ is not a single point.

Assume the tree $T$ is branched, and $t^*\in T$ is a branch point. Then $\{t^*\}\times U$ must be an edge path. Then by the rectangle complex structure of $X$, if $(t^*,u^*)$ is any vertex of $\m(g)$, $T\times \{u^*\}$ also lies in the 1-skeleton of $X$. The only way, then, for $\m(g)$ to fail to be a subcomplex is if $T$ has a leaf $t^{\prime}$ such that $(t^\prime, u^*)$ is not a vertex of $X$. But this would imply that some rectangular cell of $X$ fails to lie in $\m(g)$ but contains some nondegenerate strip contained in $\m(g)$, impossible since $g$ acts simplicially on $X$. Therefore $\m(g)$ is a subcomplex of $X$.

If $T$ is not branched, then $\m(g) \subset \R \times \R \cong \E^2$, but then $\m(g)\cong \E^2$, so $\m(g)$ must be a subcomplex of $X$.

\end{proof}

\begin{corollary}
\label{m(v) subcomplex}

Let $\Gamma$ be homogenous of dimension 2. Then for any vertex $v$ of $\Gamma$, $\m(v)$ is a convex
subcomplex of $X$.

\end{corollary}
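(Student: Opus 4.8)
The plan is to deduce the corollary directly from Lemma~\ref{m(g) subcomplex} by taking $g=v$. That lemma concludes that $\m(g)$ is a convex subcomplex whenever the centralizer of $g$ is non-cyclic, so the only thing I need to verify for the generator $v$ is that its centralizer $C(v)$ is non-cyclic.

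First I would extract the relevant structure from the hypothesis that $\Gamma$ is homogeneous of dimension $2$. By definition this means $\Gamma$ is connected and the link of every vertex is homogeneous of dimension $1$, hence non-empty and discrete. The key consequence is simply that $lk(v)$ is non-empty, so $v$ has at least one neighbor $w$ in $\Gamma$. (It is precisely non-emptiness of the link, rather than connectedness alone, that guarantees such a $w$ exists.)

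Since $v$ and $w$ are adjacent, they commute in $\AG$, so $w$ lies in $C(v)$; of course $v\in C(v)$ as well. The subgroup $\langle v,w\rangle$ is therefore abelian, and because $v$ and $w$ are distinct generators joined by an edge it is isomorphic to $\Z^2$. As every subgroup of a cyclic group is cyclic while $\Z^2$ is not cyclic, the inclusion $\langle v,w\rangle\cong\Z^2\le C(v)$ forces $C(v)$ to be non-cyclic. Lemma~\ref{m(g) subcomplex} then applies with $g=v$ and yields that $\m(v)$ is a convex subcomplex of $X$.

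There is essentially no obstacle here: the entire content is the observation that homogeneity of dimension $2$ forces every vertex to have a neighbor, which produces the required $\Z^2$ inside the centralizer. The one point to state carefully is the passage from ``$C(v)$ contains a copy of $\Z^2$'' to ``$C(v)$ is non-cyclic,'' which is immediate since cyclic groups have only cyclic subgroups.
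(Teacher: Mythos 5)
Your proof is correct and takes essentially the same approach as the paper, which states this corollary without further argument as an immediate consequence of Lemma~\ref{m(g) subcomplex}. Your verification of the hypothesis---that homogeneity of dimension 2 makes $lk(v)$ non-empty, so an adjacent generator $w$ yields $\Z^2\cong\langle v,w\rangle\leq C(v)$ and hence a non-cyclic centralizer---is precisely the intended (implicit) justification.
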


Fix a vertex $v$ in $\G$. We define some useful notation. If $\AG$ acts on $X$ and $u$ is a vertex of $\Gamma$, then define $P^u$ as follows. If $\m(u) \cap \m(v)=\emptyset$, then let $P^{u}=\{x\in \m(v) : x $ is an endpoint in $\m(v)$ of a spanning geodesic from $\m(v)$ to $\m(u)\}$; recall that a \emph{spanning geodesic} from $A$ to $B$ is a minimal length CAT(0) geodesic that joins a point of $A$ to a point of $B$. Alternatively, if $\m(u) \cap \m(v) \neq \emptyset$, then let $P^{u}=\m(u) \cap \m(v)$. 

\begin{lemma}
\label{P^u}
Let $\G$ be homogeneous of dimension 2, and consider vertices $u, v$ of $\G$. Then $P^u$ is a convex subcomplex of $\m(v)\cong T \times U$; thus $P^u\cong T^u \times U^u$, where $T^u$ and $U^u$ are subtrees of $T$ and $U$, respectively. If $\m(u) \cap \m(v)=\emptyset$ then $P^u$ is contained in the 1-skeleton of $\m(v)$, so either $T^u$ or $U^u$ consists of a single vertex.

\end{lemma}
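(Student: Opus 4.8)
The plan is to follow the two cases in the definition of $P^u$, first establishing convexity, then upgrading to ``convex subcomplex,'' then reading off the product structure, and finally using a dimension count to get the $1$-skeleton claim. I first record the standing facts. Since $\G$ is homogeneous of dimension $2$ it is connected, so every vertex has non-empty link; hence $u$ and $v$ each commute with a neighbor and so have non-cyclic centralizer. By Lemma~\ref{m(g) subcomplex} and Corollary~\ref{m(v) subcomplex}, $\m(u)$ and $\m(v)$ are convex subcomplexes, and $\m(v)\cong T\times U$ with $U\cong\R$.

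Convexity of $P^u$ is the easy part. If $\m(u)\cap\m(v)\neq\emptyset$, then $P^u=\m(u)\cap\m(v)$ is an intersection of convex subcomplexes, hence a convex subcomplex, and we are essentially done with everything except the product factorization. If $\m(u)\cap\m(v)=\emptyset$, I would invoke the bridge $\mathcal B$ between $\m(v)$ and $\m(u)$ (the union of the spanning geodesics), which the Flat Quadrilateral discussion in the preliminaries shows is convex. Since $P^u=\mathcal B\cap\m(v)$ is precisely the set of endpoints in $\m(v)$ of spanning geodesics, it is an intersection of two convex sets, hence convex. What genuinely remains in this case is to show $P^u$ is a \emph{subcomplex}, and I flag this as the crux; granting it, the two remaining claims follow cleanly, as I now describe.

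Once $P^u$ is known to be a convex subcomplex of $\m(v)\cong T\times U$, the factorization $P^u\cong T^u\times U^u$ is forced by the shape of the walls of a product. Every wall of $T\times U$ is either a ``vertical'' wall $\{m\}\times U$ dual to an edge of $T$ or a ``horizontal'' wall $T\times\{m\}$ dual to an edge of $U$, so each half-space is a product, either $T_0\times U$ or $T\times U_0$. A convex subcomplex is the intersection of the half-spaces containing it (\cite{S}), and the intersection of product half-spaces is again a product; writing $T^u=\bigcap T_i$ and $U^u=\bigcap U_j$ gives $P^u=T^u\times U^u$ with $T^u$, $U^u$ subtrees ($U^u$ a subinterval of $U\cong\R$). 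For the $1$-skeleton claim, suppose $\m(u)\cap\m(v)=\emptyset$, so $d:=d(\m(v),\m(u))>0$. By the Flat Quadrilateral Theorem any two spanning geodesics bound a Euclidean rectangle, so they are parallel and $\mathcal B\cong P^u\times[0,d]$. If $P^u$ contained a $2$-cell, then $\mathcal B$ would contain an isometric copy of a $3$-dimensional box, contradicting that $X$ is $2$-dimensional. Hence $\dim P^u\le 1$, and since $P^u=T^u\times U^u$ is a subcomplex of dimension at most $1$, one factor must be $0$-dimensional, i.e. a single vertex.

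The step I expect to be the main obstacle is the intermediate claim that $P^u$ is a subcomplex in the disjoint case. The difficulty is that a CAT(0) spanning geodesic can a priori cross a rectangle diagonally, so its endpoint in $\m(v)$ need not be a vertex; one must rule out that the nearest-point set meets the interior of a cell of $\m(v)$ without containing the whole cell (in which case the dimension and convexity arguments would survive but $P^u$ would fail to be cellular). I would attack this by combining the product decompositions $\m(v)\cong T\times U$ and $\m(u)\cong T'\times U'$ with a wall analysis of spanning geodesics---each such geodesic crosses only the walls separating $\m(v)$ from $\m(u)$, and lies in any wall it meets in a segment of positive length (\cite{BH})---and by exploiting that \emph{both} minsets are $2$-dimensional to show the endpoint set is a union of cells. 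This combinatorial-geometric analysis, rather than the convexity or the dimension count, is where the real content of the lemma lies.
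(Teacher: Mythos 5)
Your proposal is solid on the pieces you actually carry out---convexity in both cases, the half-space route to the product decomposition, and the bridge dimension count---but it has a genuine gap, one you flag yourself: you never prove that $P^u$ is a subcomplex when $\m(u)\cap\m(v)=\emptyset$. Your final paragraph offers only a strategy (``I would attack this by combining\dots''), not an argument, and both of your downstream steps rest on the missing claim: the half-space characterization you cite from \cite{S} (a convex \emph{subcomplex} is the intersection of the half-spaces containing it) presupposes cellularity, and your dimension count explicitly invokes ``since $P^u=T^u\times U^u$ is a subcomplex of dimension at most $1$.'' Without cellularity, a convex one-dimensional subset of $T\times U$ could a priori be a diagonal segment of a rectangle, and neither the product factorization nor the $1$-skeleton conclusion follows. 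So as written, the crux of the lemma is asserted rather than proved.

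For comparison, here is how the paper closes that gap. First, containment of $P^u$ in the $1$-skeleton as a set is immediate and needs no bridge dimension count: since $X$ is $2$-dimensional, the interior of a $2$-cell of $\m(v)$ is open in $X$, so a spanning geodesic starting at such a point would have an initial segment inside $\m(v)$, and the remaining subsegment would be a shorter path from $\m(v)$ to $\m(u)$, contradicting minimality. For cellularity, suppose $P^u$ contains a point $x$ in the interior of an edge $[a,b]$. By \cite{ChM} there exists a spanning geodesic with vertex endpoints, so $P^u$ contains a vertex; being connected and contained in the $1$-skeleton, $P^u$ must then contain an endpoint of the edge, say $a$. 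The Flat Quadrilateral Theorem makes the spanning geodesics at $a$ and $x$ opposite sides of a flat rectangle whose third side $[a,x]$ lies in the $1$-skeleton, which by the rectangle structure of $X$ forces the spanning geodesic at $a$ to be an edge path bounding a strip of $2$-cells containing the geodesic at $x$; the opposite side of the strip is an edge path from $b$ of the same length, hence itself a spanning geodesic, so $b\in P^u$, and convexity then gives $[a,b]\subset P^u$. Note your instinct is right that diagonal crossings are the danger, but the resolution is this strip argument rather than a wall count---and one cannot hope to show every endpoint of a spanning geodesic is a vertex, since $P^u$ genuinely contains non-vertex points; the content is that such points come in whole edges. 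Your alternative derivations of the product structure and of the single-vertex factor are fine once cellularity is in hand, so the repair is localized to this one step.
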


\begin{proof}

First suppose that $\m(u) \cap \m(v) \neq \emptyset$. Then $P^{u}=\m(u) \cap \m(v)$ is a convex subcomplex of $X$, since both $\m(u)$ and $\m(v)$ are (by Corollary~\ref{m(v) subcomplex}). 

Now suppose that $\m(u) \cap \m(v) = \emptyset$. 

First note that in this case, any point of $P^u$ must lie in the 1-skeleton of $\m(v)$, since a point in the interior of a 2-dimensional rectangular cell cannot be an endpoint of a spanning geodesic.

Recall that the union of the spanning geodesics between two convex subcomplexes is called the bridge between them. By the Flat Quadrilateral Theorem, the bridge between $\m(u)$ and $\m(v)$ is convex; thus $P^u$ which is the intersection of the bridge with $\m(v)$ is convex.

To show that $P^u$ is a subcomplex of $X$, we need only show that if $P^u$ contains a point in the interior of an edge, then it contains the entire edge. Suppose $P^u$ contains a point $x$ in the interior of edge $[a,b]$, so that $x$ is the endpoint of a spanning geodesic. $P^u$ also contains some vertex, since there must exist a spanning geodesic with vertex endpoints, by \cite{ChM}. Because $P^u$ is a connected subset of the 1-skeleton that contains both $x$ and some vertex, then $P^u$ must contain one of the endpoints of the edge $[a,b]$ containing $x$. Without loss of generality, suppose $a \in P^u$. Then the spanning geodesics starting from $a$ and $x$ form opposite sides of a rectangle in $X$. Since the third side $[a,x]$ lies in the 1-skeleton, by the rectangle structure of $X$, the spanning geodesic with endpoint $a$ must be an edge path. 

Then the spanning geodesic from $a$ must bound a strip of rectangles whose union contains the spanning geodesic from $x$ in its interior. Then the opposite side of the strip, which is an edge path starting from vertex $b$, is also a spanning geodesic (since it has the same length as the spanning geodesic from $a$ and also joins $\m(v)$ and $\m(u)$). Thus $b \in P^u$. 

Since $P^u$ is convex and $a\in P^u$, the entire edge $[a,b]$ lies in $P^u$, which is what we wanted to show. Thus $P^u$ is a subcomplex of $X$.

Using either definition of $P^u$, since $P^u \subset \m(v)\cong T \times U$, the convexity of $P^u$ implies that it must be a product $T^u \times U^u$, where $T^u$ and $U^u$ are subtrees of $T$ and $U$, respectively.

Note that in the case that $\m(u) \cap \m(v) = \emptyset$, since $P^u$ lies in the 1-skeleton of $\m(v)$, either $T^u$ or $U^u$ consists of a single vertex.

\end{proof}

Recall that $P^u$ is the set of endpoints in $\m(v)$ of spanning geodesics from $\m(v)$ to $\m(u)$.

\begin{lemma}
\label{P^u vertices}
 Let $\Gamma$ be homogeneous of dimension 2, and let $u,v$ be vertices of $\Gamma$. The set of endpoints in $\m(v)$ of minimal edge paths from $\m(v)$ to $\m(u)$ is equal to the set of vertices of $P^u$.
\end{lemma}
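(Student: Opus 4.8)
The plan is to treat the two clauses in the definition of $P^u$ separately. When $\m(u)\cap\m(v)\neq\emptyset$, a minimal edge path between $\m(v)$ and $\m(u)$ has length $0$, so it is a single vertex lying in $\m(u)\cap\m(v)=P^u$; conversely every vertex of $P^u=\m(u)\cap\m(v)$ is such a (length $0$) path. Both sets then coincide with the vertex set of $\m(u)\cap\m(v)$, so I would dispose of this case immediately and reduce to $\m(u)\cap\m(v)=\emptyset$.

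In that case, write $S=S_{\m(v),\m(u)}$ for the set of separating walls, so that by Proposition~\ref{min} every minimal edge path from $\m(v)$ to $\m(u)$ crosses exactly the walls of $S$, each once, and $d_1(\m(v),\m(u))=\sum_{W\in S}w(W)$. The technical heart of the argument, which I would isolate as a preliminary step, is the equality
\[ d(\m(v),\m(u)) \;=\; d_1(\m(v),\m(u)), \]
where $d$ is the CAT(0) distance. I would derive this from the existence of a spanning geodesic that is simultaneously an edge path: Lemma~\ref{P^u} and its proof produce, for a vertex of $P^u$, a spanning geodesic realized as an edge path via the strip argument. If $\gamma_0$ is such an edge-path spanning geodesic then $l(\gamma_0)=d(\m(v),\m(u))$ because it is a spanning geodesic, while $l(\gamma_0)\geq d_1(\m(v),\m(u))$ because it is an edge path between the two subcomplexes; combined with the trivial inequality $d(\m(v),\m(u))\leq d_1(\m(v),\m(u))$ (type $1$ paths are a special case of paths), all three quantities are forced to agree.

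Granting this equality, both inclusions follow from a length comparison. For ``vertex of $P^u$ $\Rightarrow$ endpoint of a minimal edge path,'' I would take a vertex $x$ of $P^u$ together with its edge-path spanning geodesic $\gamma_0$ to $\m(u)$; since $l(\gamma_0)=d(\m(v),\m(u))=d_1(\m(v),\m(u))$, the path $\gamma_0$ is a minimal edge path from $\m(v)$ to $\m(u)$ with endpoint $x$. For the reverse inclusion, let $P$ be any minimal edge path from $x\in\m(v)$ to $b\in\m(u)$. Its length as a curve is $l(P)=d_1(\m(v),\m(u))=d(\m(v),\m(u))$, so
\[ d(\m(v),\m(u))\;\leq\; d(x,b)\;\leq\; l(P)\;=\;d(\m(v),\m(u)), \]
whence $d(x,b)=d(\m(v),\m(u))$. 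Thus $P$ is a CAT(0) path from $x$ to $b$ whose length equals $d(x,b)$, hence a geodesic, and it realizes the distance between the two subcomplexes, hence is a spanning geodesic. Its endpoint $x$ is therefore a vertex of $P^u$, as desired.

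The main obstacle is the preliminary equality $d(\m(v),\m(u))=d_1(\m(v),\m(u))$, equivalently the production of an edge-path spanning geodesic from each vertex of $P^u$. When $P^u$ is positive-dimensional this is exactly the content of the strip argument in Lemma~\ref{P^u}; the delicate point is the degenerate case in which $P^u$ is a single vertex, where one must rule out a genuinely diagonal spanning geodesic. There I would use that the bridge is then a single spanning geodesic with vertex endpoints (by the existence of vertex-endpoint spanning geodesics, uniqueness being forced by $P^u$ being a point), together with the orthogonality of a spanning geodesic to $\m(v)\cong T\times U$: a diagonal initial segment would make an angle less than $\frac{\pi}{2}$ with the edge of $\m(v)$ in the $U$-direction at the starting vertex, contradicting that a spanning geodesic meets $\m(v)$ at angle at least $\frac{\pi}{2}$. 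This forces the geodesic to begin, and by the same argument at each successive vertex to continue, along edges.
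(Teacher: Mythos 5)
You handle the intersecting case and the non-degenerate (strip) case essentially as the paper does, but there is a genuine gap at precisely the point you flag as delicate: the degenerate case where $P^u$ is a single vertex. Your orthogonality argument does not rule out a diagonal spanning geodesic. The fact that a spanning geodesic meets $\m(v)$ at angle at least $\pi/2$ constrains only its Alexandrov angles with directions \emph{lying in} $\m(v)$; it says nothing about its angle with an arbitrary edge at the starting vertex. If the geodesic leaves the vertex $x$ through the interior of a rectangle neither of whose edges at $x$ lies in $\m(v)$, then in the link of $x$ the diagonal direction can be at distance at least $\pi/2$ from every direction of $\m(v)$, and no contradiction arises; the same objection defeats the ``at each successive vertex'' continuation, where there is no minset left to be orthogonal to (and a diagonal geodesic need not pass through any further vertices anyway). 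Consequently your preliminary equality $d(\m(v),\m(u))=d_1(\m(v),\m(u))$ is unproven in the degenerate case, and it is not a consequence of the lemma itself, which matches only endpoint \emph{sets}, not lengths: the lemma is perfectly consistent with a unique diagonal spanning geodesic of CAT(0) length strictly smaller than $d_1(\m(v),\m(u))$. A symptom that something is off: granting your equality, your ($\subset$) argument would show that every minimal edge path between the minsets is itself a CAT(0) geodesic, hence by uniqueness of geodesics \emph{the} straight segment between its endpoints --- a conclusion much stronger than the lemma and one the paper never asserts.

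The paper's proof is structured exactly so as to avoid this equality. For ($\subset$) it does not try to make the edge path geodesic; it passes to the CAT(0) geodesic segment $g$ \emph{with the same endpoints} as the minimal edge path, notes that $g$ crosses only the walls separating $\m(u)$ from $\m(v)$, and then uses two facts: (i) among geodesics joining vertices, length is monotone in the set of walls crossed, and (ii) by \cite{ChM} a spanning geodesic with vertex endpoints exists; together these force $g$ to be spanning. For ($\supset$) in the unique-spanning-geodesic case, it observes merely that \emph{some} minimal edge path exists, and by the ($\subset$) direction its endpoints must coincide with those of the unique spanning geodesic --- so the single vertex of $P^u$ is reached by a minimal edge path whether or not the spanning geodesic runs along edges. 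To repair your argument you would have to either genuinely prove that diagonal spanning geodesics between such minsets cannot occur (your angle argument does not), or restructure the degenerate case along the paper's lines, keeping your strip argument (which is correct, and is the paper's) for the case of more than one spanning geodesic.
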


\begin{proof}
($\subset$):
First note that (i) if $g_1$ and $g_2$ are geodesic segments joining pairs of vertices, and if the set of walls that cross $g_2$ contains the set that cross $g_1$, then $l(g_1)\leq l(g_2)$. 

Also note that (ii) since there exists a spanning geodesic \emph{with vertex endpoints} (by \cite{ChM}), if a geodesic $g$ has minimal length among geodesics joining \emph{vertices} of $\m(u)$ and $\m(v)$, then $g$ has minimal length among \emph{all} geodesics joining $\m(u)$ and $\m(v)$, that is, $g$ is a spanning geodesic.

Recall that a minimal edge path between $\m(v)$ and $\m(u)$ crosses only the walls that separate $\m(u)$ from $\m(v)$; thus so does the geodesic segment $g$ with the same endpoints. Since any geodesic joining $\m(v)$ to $\m(u)$ must cross at least these walls, $g$ must have minimal length among geodesics between vertices of $\m(v)$ and $\m(u)$, by (i). Thus by (ii) $g$ is a spanning geodesic.

This shows that an endpoint of a minimal edge path is also an endpoint of a spanning geodesic.

($\supset$):
We look at two cases. 

First suppose that there is a unique spanning geodesic. Since any minimal edge path gives a spanning geodesic with the same endpoints (by the above paragraph), all minimal edge paths must join the endpoints of the unique spanning geodesic. 

Now suppose that there is more than one spanning geodesic. By the Flat Quadrilateral Theorem (Bridson and Haefliger) and Lemma~\ref{P^u}, the union of the spanning geodesics is a product $P^u \times S$, where $S$ is a spanning geodesic and $P^u$ is a tree in the 1-skeleton of $X$. Thus by the rectangular cell structure of $X$, any spanning geodesic that starts at a vertex of $P^u$ must actually be an edge path, and so must be a minimal edge path between $\m(v)$ and $\m(u)$.

This shows that every vertex of $P^u$ is an endpoint of a minimal edge path from $\m(v)$ to $\m(u)$.

\end{proof}

We define some important notation. Given $\AG$ a 2-dimensional RAAG, let $D$  be the set of all elements of $\AG $ of reduced word length less than or equal to 4, 
and given an action $\alpha: \AG \circlearrowleft X$, define 

$$ M_1^\alpha =\max_{d\in D}l_1^\alpha(d).$$

\begin{theorem}
\label{a1}
Let $\G$ be a simplicial graph that is homogeneous of dimension 2. Let $\AG$ be the RAAG with defining graph $\G$. Let $\alpha$ be a minimal action by $\AG$ on a 2-dimensional CAT(0) rectangle complex $X$.

Then for any two vertices $u$, $w$ of $\G$, $d_1(\m(u), \m(w)) \leq M_1^\alpha$. 
\end{theorem}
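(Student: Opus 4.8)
The plan is to argue entirely in terms of walls, using the fact that both $\m(u)$ and $\m(w)$ are convex subcomplexes (Corollary~\ref{m(v) subcomplex}) together with Proposition~\ref{min}, which gives $d_1(\m(u),\m(w)) = \sum_{W \in S} w(W)$, where $S$ is the set of walls separating $\m(u)$ from $\m(w)$. Note that the graph distance between $u$ and $w$ in $\G$ plays no role: I work directly with a group element $g \in \AG$ obtained as a short word in $u$ and $w$, and bound the geometric quantity $d_1(\m(u),\m(w))$ by $l_1^\alpha(g)$. The first case is trivial: if $\m(u) \cap \m(w) \neq \emptyset$ then $d_1(\m(u),\m(w)) = 0 \le M_1^\alpha$. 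This disposes in particular of the case where $u$ and $w$ are adjacent in $\G$, since then $\langle u,w\rangle$ is a $\Z^2$-subgroup and the Flat Torus Theorem gives $\m(u)\cap\m(w) \supseteq \m(\langle u,w\rangle) \cong \E^2 \neq \emptyset$.

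So assume $\m(u)\cap\m(w)=\emptyset$; then $u$ and $w$ do not commute and $\langle u,w\rangle$ is free of rank two, so every nontrivial reduced word in $u,w$ represents a nontrivial, hence hyperbolic, element of $\AG$. The goal is to produce an element $g$ that is a word of length at most $4$ in $u^{\pm1},w^{\pm1}$, together with a type $1$ axis $\gamma$ for $g$ whose fundamental domain crosses every wall of $S$. Granting this, since a type $1$ axis crosses no wall more than once (as noted just before Lemma~\ref{edge path axes}), the $l_1$-translation distance along $\gamma$ equals the sum of the widths of the walls crossed by a fundamental domain; as this set of walls contains $S$, we get $l_1^\alpha(g) \ge \sum_{W \in S} w(W) = d_1(\m(u),\m(w))$, and since $|g| \le 4$ we conclude $d_1(\m(u),\m(w)) \le l_1^\alpha(g) \le M_1^\alpha$, as desired.

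To build $\gamma$, I would start from a minimal length edge path $\beta$ from $\m(u)$ to $\m(w)$, whose endpoints $p \in \m(u)$ and $q \in \m(w)$ are vertices; by Proposition~\ref{min} the walls crossed by $\beta$ are exactly those of $S$, each once. Because $p\in\m(u)$ and $q\in\m(w)$, there are $l_1$-translation segments for $u$ inside $\m(u)$ and for $w$ inside $\m(w)$ (type $1$ geodesics from a point to its $u$- resp.\ $w$-translate). The idea is to assemble $\gamma$ as a bi-infinite concatenation of $g$-translates of a fundamental block made from $\beta$, a ``return bridge'', and such translation segments running inside $\m(u)$ and $\m(w)$; the signs in the word $g$ are chosen so that the block closes up under a single element of length at most $4$. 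Crucially, every wall of $S$ is disjoint from both $\m(u)$ and $\m(w)$ (a separating wall meets neither side), so the translation segments lying inside the minsets cross no wall of $S$; thus $\beta$ is the only part of the block responsible for crossing $S$, and all of $S$ is covered.

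The main obstacle is verifying that the assembled path $\gamma$ is actually \emph{geodesic}, i.e.\ that it crosses no wall more than once, so that it is a genuine type $1$ axis and $l_1^\alpha(g)$ really equals the sum of the widths of the walls it crosses. This is exactly where the CAT(0) geometry enters: one must rule out that the return bridge, or a neighbouring $g$-translate of the block, re-crosses a wall already crossed by $\beta$. I expect to control this using the product structures $\m(u)\cong T_u\times U_u$ and $\m(w)\cong T_w\times U_w$, the convexity of the bridge between the two minsets furnished by the Flat Quadrilateral Theorem, and the structure of the spanning-geodesic sets from Lemma~\ref{P^u}, which force the translation segments to run ``parallel'' to the minsets and keep $\gamma$ moving monotonically away from each wall it has already crossed. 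Getting the signs right in $g$ so that no such cancellation occurs---and checking that the resulting word never needs length more than $4$---is the delicate heart of the argument; the trivially-intersecting case and the wall-counting bookkeeping above are routine by comparison.
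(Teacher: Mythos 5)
Your outline reproduces the paper's architecture---reduce to the case $\m(u)\cap\m(w)=\emptyset$, take a minimal edge path between the minsets, close it up with translation segments inside the minsets into a $g$-invariant bi-infinite type 1 path for a short word $g$, and argue that for a suitable sign choice no wall is crossed twice, so that $l_1^\alpha(g)\geq d_1(\m(u),\m(w))$ by Lemma~\ref{edge path axes}. Your bookkeeping is also correct (separating walls miss both minsets, so the translation segments cross no wall of $S$). But the step you explicitly defer (``getting the signs right \dots is the delicate heart'') is not a routine verification: it is essentially the entire proof in the paper, and your specific setup makes it harder than the paper's. You commit to a word $g$ in $u^{\pm1},w^{\pm1}$ only, whereas the paper's Claim 1 first replaces $u,w$ by auxiliary elements $u^*,w^*$ of length at most $2$, possibly involving link vertices $u'\in lk(u)$, $w'\in lk(w)$ (for instance $u^*=uu'$ when both $u,u'$ are gridline isometries), chosen precisely so that $\m(u^*)=\m\langle u,u'\rangle\cong\E^2$ and $u^*$ is not a gridline isometry. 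Both features are load-bearing. The argument that a doubled wall forces a nondegenerate strip of spanning geodesics whose endpoints lie on a single \emph{oriented gridline} $g_u$, with every wall meeting a translate of the translation segment forced to cross a definite half-line $g_u^{\pm}$, uses the rectangular grid structure of the flat $\E^2$; if you insist on $u$ itself, then $\m(u)\cong T\times\R$ with $T$ a possibly branching tree, the endpoint set $P^w=T^w\times U^w$ of Lemma~\ref{P^u} can be a branching subtree times a point, and the two-sided orientation argument that kills double crossings for the corrected word does not go through. Moreover, the paper uses Lemma~\ref{edge path crossing n-rectangle} to rule out that the subpath between the two crossings of a wall contains a whole translation segment; this requires those segments to traverse $2$-rectangles, which holds because $u^*,w^*$ are arranged to be non-gridline isometries. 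If $u$ is a gridline isometry, your $u$-translation segment is a straight gridline path, traverses no rectangle, and this exclusion fails, leaving an extra case your plan does not address.

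A second, related omission: you never specify which words of length at most $4$ you will try, nor give a mechanism by which failure of one choice yields success of another. In the paper the dichotomy is exact and is proved: if some wall crosses the axis of $u^*w^*$ twice, the resulting strip and the orientations $g_u^{\pm}$, $g_w^{\pm}$ are then used to show that \emph{no} wall can cross the axis of $u^*(w^*)^{-1}$ more than once---the obstruction for one word becomes the proof for the other. Your proposal asserts that a good sign pattern exists but supplies no such conversion; without it, ``assemble and check'' is an expectation, not an argument. So: right overall strategy, correct peripheral lemmas, but the core case analysis is missing, and your restriction to words in $u,w$ alone forgoes exactly the reduction (to flat minsets via $u^*,w^*$) that makes that analysis possible.
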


\begin{proof}
We may assume that $\m(u) \cap \m(w) = \emptyset$, because otherwise  $$d_1(\m(u), \m(w))=0$$ and the inequality holds. 

Take any vertex $u^\prime \in lk(u)$, and note that $\E^2 \cong \m \langle u,u^\prime \rangle \subset \m(u)$.

Claim 1: some element $u^*$ of reduced word length 1 or 2 has $\m(u^*) = \m \langle u, u^\prime \rangle$. Proof: if $u$ is \emph{not} a gridline isometry, then take $u^*=u$; if $u^\prime$ is \emph{not} a gridline isometry, then take $u^*=u^\prime$. If both $u, u^\prime$ are gridline isometries, then take $u^*=u u^\prime$. Note that however $u^*$ is defined, $\E^2 \cong \m \langle u, u^\prime \rangle \subset \m(u^*)$. Equality follows since $\m(u^*)\cong \E^2$. This is because $\m(u^*)\cong \E \times U$, where $U$ is a tree and $u^*$ acts in the direction of the factor $\E$. But if $U \ncong \E$, then there is branching along some line $\E \times \{u\}$ so this line must be a gridline and $u^*$ is a gridline isometry, contradicting our definition of $u^*$; thus $U\cong \E$ and $\m(u^*)\cong \E^2$.

Similarly, there is some element $w^* $  of reduced word length 1 or 2 such that $\m(w^*)=\m \langle w, w^\prime \rangle \cong \E^2$, where $w^\prime \in lk(w)$.

We will show (Claim 2) that either $u^*w^*$ or $u^*(w^*)^{-1}$ has $l^\alpha_1$ translation length greater than or equal to $d_1(\m(u^*), \m(w^*))$; this in turn is clearly greater than or equal to $d_1(\m(u), \m(w))$ since $\m(u^*)\subset \m(u)$ and $\m(w^*)\subset \m(w)$. The theorem follows, since $u^*w^*, u^*(w^*)^{-1} \in D$.

Now we prove Claim 2.

Let $P$ be any minimal edge path from $\m(u^*)$ to $\m(w^*)$, so that 
$$d_1(\m(u^*), \m(w^*)) =l(P).$$  Let $x\in \m(u^*)$ and $y \in \m(w^*)$ be the endpoints of $P$.

We now construct an edge path $Q$ from $x$ to $u^*w^*.x$. Let $A$ be a minimal edge path from $x$ to $u^*.x$, and let $B$ be a minimal edge path from $y$ to $w^*.y$ (see  Figure~\ref{fig:5}). 
\begin{figure}[htbp]
  \centering
  \includegraphics[width=2.5in]{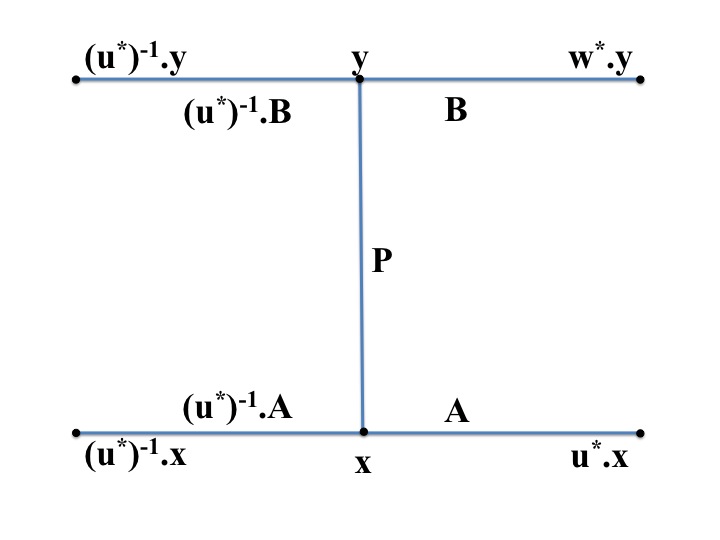}
  \caption{Illustration for Lemma~\ref{a1}, edge paths $A$, $B$, and $P$.}
  \label{fig:5}
\end{figure}
Let $Q$ be the edge path $A \cdot u^*.P \cdot u^*.B \cdot u^*w^*.\overline{P}$, which joins $x$ to $u^*w^*.x$ (see  Figure~\ref{fig:6}). 
\begin{figure}[htbp]
  \centering
  \includegraphics[width=3in]{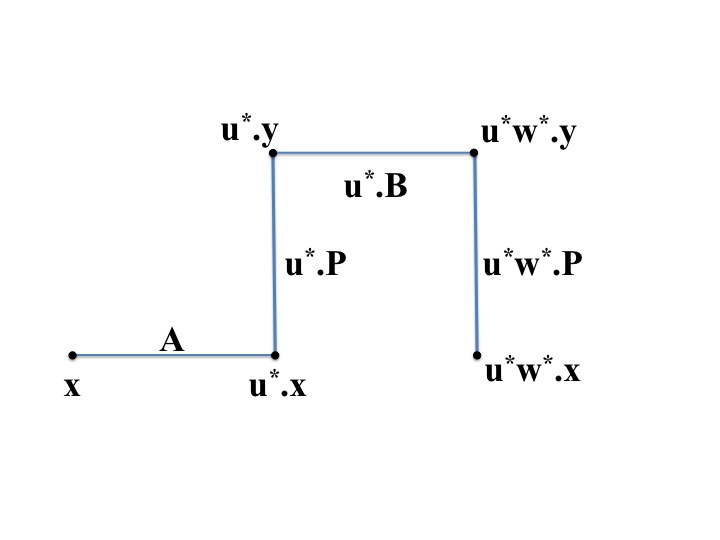}
  \caption{Illustration for Lemma~\ref{a1}, edge path $Q$.}
  \label{fig:6}
\end{figure}
 Here $\overline{P}$ denotes the path with the same image as path $P$ but that runs in the opposite direction. 

Since $Q$ contains translates of $P$ as subpaths, $l(P)\leq l(Q)$.  We concatenate the $u^*w^*$-translates of $Q$ to form the infinite edge path $Q_\infty$.

If no wall crosses $Q_\infty$ more than once, then (1) $Q\subset Q_\infty$ is minimal so $l(Q)=d_1(x, u^*w^*.x)$, and (2) by Lemma~\ref{edge path axes}, $d_1(x, u^*w^*.x)= l^\alpha_1(u^*w^*)$; thus we have $d_1(\m(u^*), \m(w^*))=l(P) \leq l(Q)=d_1(x, u^*w^*.x)= l^\alpha_1(u^*w^*)$.

Now suppose that some wall crosses $Q_\infty$ more than once. 
We will show that we can construct an alternative edge path $Q^\prime$ that connects $x$ to $u^*(w^*)^{-1}.x$, that contains a translate of $P$ as a subpath, and for which no wall crosses $Q^\prime_\infty$ more than once. Then by the same reasoning as for $Q$, we can conclude that $d_1(\m(u^*), \m(w^*)) \leq l^\alpha_1(u^*(w^*)^{-1})$. This will complete the proof of Claim 2.

Let $S$ be a minimal segment of $Q_\infty$ that crosses the same wall twice, and let $W$ be the wall that it crosses. Note that $S$ consists of an $l_1$-geodesic subpath $S^\prime$ of $Q_\infty$ flanked on both sides by edges that cross wall $W$. 

Since edge paths $A $ and $B$ traverse ($2$-dimensional) rectangles, Lemma~\ref{edge path crossing n-rectangle} implies that $S$ cannot contain either $A$ or $B$ or any of their translates. Also, (1) wall $W$ cannot cross $A$, $B$, or $P$ (or their translates) more than once, since by definition, these edge paths are minimal, and (2) wall $W$ cannot cross both $P$ and the adjoining translate of either $A$ or $B$, by Lemma~\ref{min2}, since $P$ is a minimal edge path from $\m(u^*)$ to $\m(w^*)$. Thus wall $W$ must cross both some translate of $A$ and one of the neighboring translates of $B$. We may assume without loss of generality that $S$ contains $P$ rather than some translate of $P$. Then there are two possibilities: either $W$ crosses both $(u^*)^{-1}.A$ and $B$, or $W$ crosses both $(w^*)^{-1}.B$ and $A$.

For simplicity we treat only the case where wall $W$ intersects both $(u^*)^{-1}.A$ and $B$; the other case is treated in exactly the same way.

Lemma~\ref{boundary N(W)} says that $S$ lies in $N(W)$, and Lemma~\ref{wall subcomplex edge} says that then $W$ crosses both some edge of $\m(u^*)$ that has endpoint $x$ and some edge of $\m(w^*)$ that has endpoint $y$ (see Figure~\ref{fig:7}; the dotted line is wall $W$). 
\begin{figure}[htbp]
  \centering
  \includegraphics[width=5in]{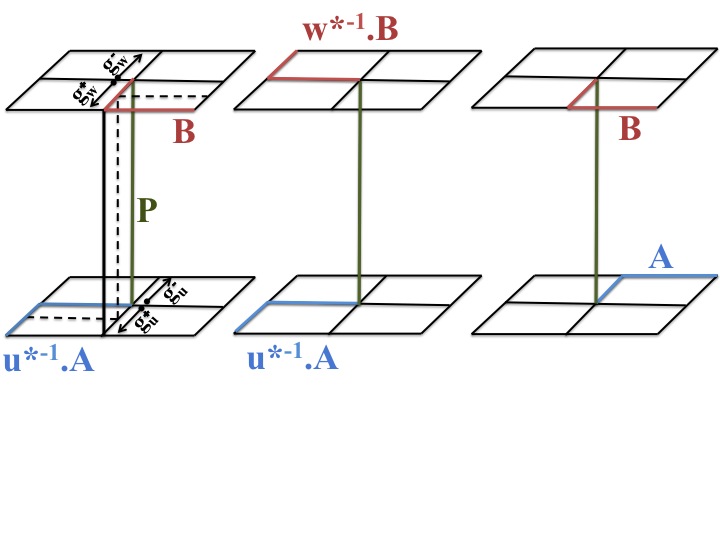}
  \caption{Illustration for Lemma~\ref{a1}, wall $W$.}
  \label{fig:7}
\end{figure} 
Since $P$ is a minimal edge path between $\m(u^*)$ and $\m(w^*)$ that connects $x$ to $y$, and since $N(W)\cong W \times [0,1]$, the path `opposite' $P$ in $N(W)$ is also a minimal edge path between $\m(u^*)$ and $\m(w^*)$.

By Lemmas~\ref{P^u} and \ref{P^u vertices}, then there is a (nondegenerate) Euclidean strip of spanning geodesics that connect $\m(u^*)$ and $\m(w^*)$ (of which one spanning geodesic lies in $W$). Furthermore, by Lemma~\ref{P^u}, the set of endpoints of these spanning geodesics in $\m(u^*)$ or $\m(w^*)$  lies in a single gridline of $\m(u^*)$ or $\m(w^*)$; call the gridline $g_u$ or $g_w$, respectively. Note that the strip of spanning geodesics allows us to assign consistent directions to $g_u$ and $g_w$; we use $x$ (respectively, $y$) as the origin of the line $g_u$ ($g_w$, respectively) and assign orientation by designating by $g^+_u$  (respectively, $g^+_w$) the half-line of $g_u$ ($g_w$, respectively) that is intersected by $W$. Recall that wall $W$ was assumed to intersect both $(u^*)^{-1}.A$ and $B$, so that $(u^*)^{-1}.A$ and $g^+_u$ share a wall, and $B$ and $g^+_w$ share a wall. Thus, taking note of the rectangular grid structure of $\m(u^*)\cong \E^2$, we see that \emph{any} wall that crosses both  $(u^*)^{-1}.A$ and $g_u$ must cross $g^+_u$, and \emph{any} wall that crosses both  $A$ and $g_u$ must cross $g^-_u$. And similarly, \emph{any} wall that crosses both  $(w^*)^{-1}.B$ and $g_w$ must cross $g^-_w$, and \emph{any} wall that crosses both  $B$ and $g_w$ must cross $g^+_w$.

Now define the edge path $Q^\prime= A \cdot u^*.P \cdot u^*(w^*)^{-1}.\overline{B} \cdot u^*(w^*)^{-1}.\overline{P}$; this path connects $x$ to $u^*(w^*)^{-1}.x$. We claim that no wall crosses the path $Q^\prime_\infty$ more than once.

Suppose for contradiction that indeed some wall crosses the path $Q^\prime_\infty$ more than once. Then by the same reasoning as above, some wall $W^\prime$ must contain a spanning geodesic that connects $\m(u^*)$ and $\m(w^*)$, so $W^\prime$ crosses both $g_u$ and $g_w$.

Also, $W^\prime$ must cross either both $(u^*)^{-1}.A$ and $(w^*)^{-1}.B$, or both $B$ and $A$ (see Figure~\ref{fig:7}). In the first case, $W^\prime$ crosses both $g^+_u$ and $g^-_w$, and in the second case, $W^\prime$ crosses both $g^-_u$ and $g^+_w$. In either case, we have a contradiction, since any wall that crosses $g^+_u$ must also cross $g^+_w$ (and not $g^-_w$), and any wall that crosses $g^-_u$ must also cross $g^-_w$ (and not $g^+_w$), since the space bounded by $g_u$ and $g_w$ has the structure of a Euclidean strip.

Thus, no wall crosses $Q^\prime_\infty$ more than once. Since $Q^\prime$ connects $x$ to $u^*(w^*)^{-1}.x$ and contains a translate of $P$ as a subpath, as noted before we can conclude that $d_1(\m(u^*), \m(w^*)) \leq l^\alpha_1(u^*(w^*)^{-1})$, and our proof is complete.

\end{proof}

We need the following lemma in order to choose a basepoint $x_0 \in X$ for a given action on $X$.

\begin{lemma}
\label{trees}
If $\{T_i\}_{i=1}^n$ is a collection of subtrees of a tree $T$, and $\cap_{i=1}^nT_i = \emptyset$, then for some $i$ and $j$, $T_i \cap T_j = \emptyset$.
\end{lemma}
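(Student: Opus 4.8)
The plan is to prove the contrapositive: if the subtrees $T_1, \dots, T_n$ intersect pairwise, i.e.\ $T_i \cap T_j \neq \emptyset$ for all $i,j$, then $\cap_{i=1}^n T_i \neq \emptyset$. This is the Helly property for subtrees of a tree, and the statement of the lemma is exactly its contrapositive. Throughout I would use two elementary facts about the ambient tree $T$: first, each subtree is a convex (geodesically convex) subset, so that if two points lie in a subtree then so does the geodesic joining them; and second, the intersection of two subtrees, being an intersection of convex sets, is again convex and hence a subtree whenever it is nonempty.

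The heart of the argument is the case of three subtrees, which I would handle using the median point. Given pairwise-intersecting subtrees $T_1, T_2, T_3$, choose points $p_{12} \in T_1 \cap T_2$, $p_{13} \in T_1 \cap T_3$, and $p_{23} \in T_2 \cap T_3$. In a tree these three points have a unique median $m$, the point lying simultaneously on all three geodesics $[p_{12}, p_{13}]$, $[p_{12}, p_{23}]$, and $[p_{13}, p_{23}]$. Since $p_{12}, p_{13} \in T_1$, convexity gives $[p_{12}, p_{13}] \subset T_1$ and hence $m \in T_1$; the analogous arguments, using $[p_{12}, p_{23}] \subset T_2$ and $[p_{13}, p_{23}] \subset T_3$, show $m \in T_2$ and $m \in T_3$. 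Thus $m \in T_1 \cap T_2 \cap T_3$, so any three pairwise-intersecting subtrees have a common point.

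I would then finish by induction on $n$, with the three-subtree case serving as the base. Assuming the Helly property for any family of $n-1$ pairwise-intersecting subtrees, suppose $T_1, \dots, T_n$ intersect pairwise. Set $S_i = T_i \cap T_n$ for $i = 1, \dots, n-1$; each $S_i$ is a subtree by the second fact above, and it is nonempty since $T_i \cap T_n \neq \emptyset$. To apply the inductive hypothesis to $S_1, \dots, S_{n-1}$ I must check that they intersect pairwise, and this is precisely where the three-subtree case is used: $S_i \cap S_j = T_i \cap T_j \cap T_n$, which is nonempty because $T_i, T_j, T_n$ are pairwise-intersecting subtrees. The inductive hypothesis then yields $\cap_{i=1}^{n-1} S_i \neq \emptyset$, and since $\cap_{i=1}^{n-1} S_i = \cap_{i=1}^n T_i$, this completes the induction.

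I expect the only genuine subtlety to be the median-point step: one must verify that in the tree $T$ three points really do admit a common median lying on each of the pairwise geodesics (equivalently, that $T$ is $0$-hyperbolic and uniquely geodesic, which holds since $T$ is a tree), and that subtrees are convex so that the relevant geodesics stay inside the appropriate $T_i$. Once that base case is in hand the induction is routine; the one point requiring care there is that the intersected family $\{S_i\}$ is again pairwise-intersecting, which is supplied by the three-subtree case rather than directly by the pairwise hypothesis on the $T_i$.
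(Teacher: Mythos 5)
Your proof is correct, but it takes a different route from the paper. You prove the contrapositive---the Helly property for subtrees---via the classical two-step argument: the median point handles three pairwise-intersecting subtrees (convexity puts the median $m$ of $p_{12},p_{13},p_{23}$ in all three), and induction on the family $S_i = T_i \cap T_n$ reduces $n$ subtrees to $n-1$, with the three-subtree case supplying the pairwise-intersection hypothesis for the $S_i$. The paper instead argues directly: it scans the family for the first index $k$ with $\cap_{i=1}^{k} T_i \neq \emptyset$ but $\bigl(\cap_{i=1}^{k} T_i\bigr) \cap T_{k+1} = \emptyset$, takes a point $p$ in the interior of the spanning geodesic between these two sets, and uses the fact that $p$ is a cut point of $T$: since $p \notin \cap_{i=1}^{k} T_i$, some $T_j$ misses $p$ and hence lies entirely in the component of $T \setminus \{p\}$ containing the running intersection, opposite the component containing $T_{k+1}$, so $T_j \cap T_{k+1} = \emptyset$. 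The two arguments buy slightly different things: yours establishes the full Helly property as a clean standalone fact and sidesteps any existence question for spanning geodesics (the median lies on geodesics between points already known to be in the subtrees, so no closedness or properness hypotheses are needed), while the paper's argument is shorter, stays entirely within the separation properties of trees already in use elsewhere in the paper, and constructively exhibits the disjoint pair via a cut point---the same style of reasoning that reappears in the basepoint construction following the lemma, where a vertex on the spanning geodesic between the disjoint pair $T^t, T^{t^\prime}$ is chosen. Both proofs are complete and correct.
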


\begin{proof}
Since $\cap_{i=1}^nT_i = \emptyset$, there exists $1 \leq k \leq n-1$ such that $\cap_{i=1}^{k}T_i \neq \emptyset$ but $(\cap_{i=1}^{k}T_i)\cap T_{k+1} = \emptyset$. Consider the spanning geodesic between $\cap_{i=1}^{k}T_i$ and $T_{k+1}$. Clearly if $p$ is a point in the interior of the spanning geodesic, then $\cap_{i=1}^{k}T_i$ and $T_{k+1}$ are in different connected components of $T \setminus \{p\}$ (since both $\cap_{i=1}^{k}T_i$ and $T_k$ are connected). But since $p \notin \cap_{i=1}^{k}T_i$, there exists $1 \leq j \leq k$ such that $p \notin T_j$. So $T_j$ is entirely contained in the connected component of $T \setminus \{p\}$ that contains $\cap_{i=1}^{k}T_i$, so $T_j$ and $T_{k+1}$ lie in different connected components of $T \setminus \{p\}$, so $T_j \cap T_{k+1}= \emptyset$.
\end{proof}

Now choose $x_0\in \m (v) \cong T\times U$ by choosing both its tree-coordinate and its $U$-coordinate appropriately, as
follows. If $u \neq v$ is a vertex of $\Gamma$, then according to Lemmas~\ref{P^u} and~\ref{P^u vertices}, $P^u=T^u \times U^u$, where $T^u$ and $U^u$ are subtrees of $T$ and $U$, respectively. If $\cap_{u\in V \setminus \{v\}}T^u \neq \emptyset$, then let the tree-coordinate $x_{0,1}$ of $x_0$ be any vertex of $\cap_{u\in V \setminus \{v\}}T^u$. If $\cap_{u\in V \setminus \{v\}}T^u = \emptyset$, then by Lemma~\ref{trees}, there exist $t, t^\prime$ such that $T^t\cap T^{t^\prime} = \emptyset$. Choose such a pair $(t,t^\prime)$. Then let $x_{0,1}$ be any vertex on the spanning geodesic from $T^t$ to $T^{t^\prime}$. Define the second coordinate $x_{0,2}$ of $x_0$ through a similar process.

What follows is a useful technical lemma. Recall that $v$ is a fixed vertex of $\G$. 

\begin{lemma} 
\label{AB}
Let $u$ be any vertex of $\G$ besides $v$. Let $p$ be any vertex of $\m(v)$ (see Figure~\ref{fig:8}). 
\begin{figure}[htbp]
  \centering
  \includegraphics[width=5in]{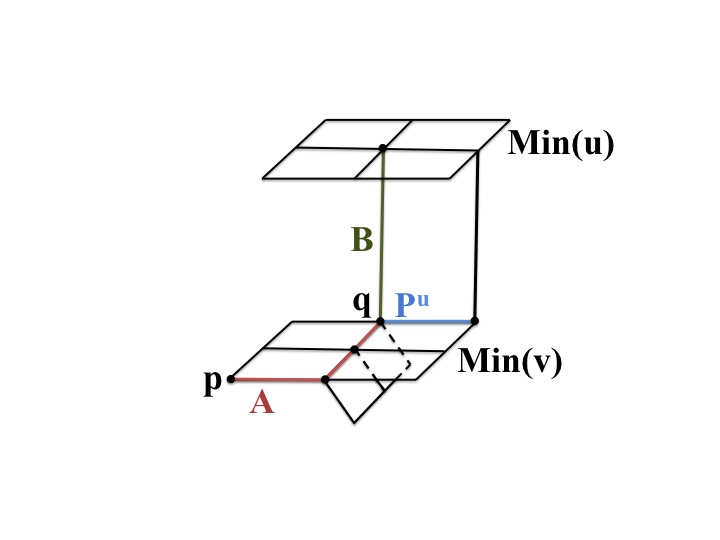}
  \caption{Illustration for Lemma~\ref{AB}.}
  \label{fig:8}
\end{figure}  
Let $A$ be a minimal edge path from $p$ to $P^u$, ending at $q$. Let $B$ be a minimal edge path between $\m(v)$ and $\m(u)$ with endpoint $q$  (we can choose such a $B$ since the vertices of $P^u$ are endpoints of such minimal edge paths, by Lemma~\ref{P^u vertices}). Then $AB$ is a minimal edge path from $p$ to $\m(u)$.
\end{lemma}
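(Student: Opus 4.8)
The plan is to verify the minimality of $AB$ directly through the wall criterion of Proposition~\ref{min}, viewing $AB$ as an edge path between the convex subcomplexes $\{p\}$ and $\m(u)$. So I must establish two things: (i) no wall crosses $AB$ more than once, and (ii) no wall meets both $AB$ and $\m(u)$ (the parallel condition for the subcomplex $\{p\}$ is automatic, since no wall passes through a vertex).

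First I would record which walls the two pieces cross. Since $p$ and $q$ are vertices of the convex subcomplex $\m(v)$, Lemma~\ref{edge path convex} gives $A\subseteq\m(v)$, so every wall crossed by $A$ meets $\m(v)$. By Proposition~\ref{min}, the minimal path $B$ between $\m(v)$ and $\m(u)$ crosses only walls that separate $\m(v)$ from $\m(u)$, and such walls meet neither subcomplex. Hence the family of walls crossed by $A$ is disjoint from the family crossed by $B$; since $A$ and $B$ each cross their own walls exactly once (minimality), no wall crosses $AB$ twice, giving (i). For (ii), a wall crossing $B$ cannot also meet $\m(u)$, again by Proposition~\ref{min}, so the entire difficulty concentrates on the walls crossed by $A$.

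The main obstacle is therefore the claim that \emph{a wall $W$ crossing $A$ cannot meet $\m(u)$}. Here I would use that, as $A$ is minimal from $p$ to $P^u$, Lemma~\ref{min2} forces every such $W$ to separate $p$ from $P^u$; thus $P^u$ lies entirely on one side of $W$ and $p$ on the other, while $W\cap\m(v)\neq\emptyset$ because $W$ crosses an edge of $A\subseteq\m(v)$. Suppose for contradiction that $W$ also meets $\m(u)$. Then $W$ crosses an edge of $\m(u)$ (Lemma~\ref{wall subcomplex edge}), whose endpoints are vertices of $\m(u)$ on opposite sides of $W$; let $b$ be the one on the same side as $p$. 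I would then invoke the nearest-point (gate) projection $\pi\colon X\to\m(v)$ onto the convex subcomplex $\m(v)$ and its two standard properties in a CAT(0) rectangle complex (cf.\ \cite{S}): since $W$ meets $\m(v)$, the vertex $\pi(b)$ lies on the same side of $W$ as $b$, hence on the same side as $p$; and the projection of $\m(u)$ into $\m(v)$ is exactly the nearest-point set $P^u$ (equivalently $\m(u)\cap\m(v)$ in the intersecting case), so $\pi(b)\in P^u$. This exhibits a point of $P^u$ on the $p$-side of $W$, contradicting that $W$ separates $p$ from $P^u$. Hence $W\cap\m(u)=\emptyset$, which yields (ii).

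Combining (i) and (ii), Proposition~\ref{min} shows that $AB$ is a minimal edge path from $\{p\}$ to $\m(u)$. The step I expect to be delicate is the projection claim $\pi(\m(u))\subseteq P^u$: this is the combinatorial \emph{gate} property, and it is precisely where the special geometry of rectangle (cube) complexes enters, since the analogous statement can fail for general convex subsets of a CAT(0) space. I would either cite it from \cite{S} or derive it from the product decomposition of the bridge between $\m(v)$ and $\m(u)$ as $P^u\times S$ furnished by Lemmas~\ref{P^u} and~\ref{P^u vertices}. The intersecting case $\m(u)\cap\m(v)\neq\emptyset$ runs identically, with $B$ degenerate of length zero and $P^u=\m(u)\cap\m(v)$.
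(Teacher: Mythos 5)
Your proposal is correct, and its skeleton coincides with the paper's: both proofs verify the two wall conditions of Proposition~\ref{min}, and both handle condition (i) and the $B$-walls of condition (ii) identically (walls crossed by $A$ meet $\m(v)$ since $A\subset\m(v)$ by Lemma~\ref{edge path convex}, while walls crossed by $B$ separate $\m(v)$ from $\m(u)$, so the two families are disjoint and each is crossed once). The genuine divergence is at the crux, showing a wall $W$ crossing $A$ cannot meet $\m(u)$. The paper argues geometrically: such a $W$ would meet both $\m(v)$ and $\m(u)$, hence must contain a spanning geodesic between them and therefore intersect $P^u$, contradicting Proposition~\ref{min} applied to $A$ (no wall meets both $A$ and $P^u$). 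You instead argue combinatorially via the gate projection $\pi$ onto $\m(v)$: a vertex $b$ of $\m(u)$ on the $p$-side of $W$ projects to a vertex of $P^u$ on the same side of $W$, contradicting that $W$ separates $p$ from $P^u$. Your two gate facts are standard but are not developed in the paper (and are not readily quotable from \cite{S} in this form), so you would need to supply them; fortunately both follow from the paper's own toolkit: applying Lemma~\ref{min2} to the convex subcomplexes $\{b\}$ and $\m(v)$ yields a vertex $a\in\m(v)$ such that the walls separating $b$ from $a$ are exactly those separating $b$ from $\m(v)$ (so $W$, which crosses $\m(v)$, does not separate them), and a short wall count then shows every wall separating $a$ from $\m(u)$ separates $\m(v)$ from $\m(u)$, whence $d_1(a,\m(u))=d_1(\m(v),\m(u))$ and $a$ is a vertex of $P^u$ by Lemma~\ref{P^u vertices}. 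As for what each approach buys: the paper's route is shorter given its bridge/spanning-geodesic infrastructure, but it leans on the unproved CAT(0) assertion that a wall meeting both minsets contains a spanning geodesic; your route stays entirely within the $d_1$/wall calculus, which makes it arguably more robust (e.g.\ for the higher-dimensional generalizations discussed in the paper's final section), at the cost of proving the gate lemma as an extra input, exactly the delicate point you flagged.
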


\begin{proof}
By Proposition~\ref{min}, we need only show that no wall crosses $AB$ more than once, and no wall intersects both $AB$ and $\m(u)$.

Since we chose $A$  to be minimal between $p$ and $P^u$,  and $B$ to be minimal between  $\m(v)$ and $\m(u)$, by Proposition~\ref{min} no wall crosses either $A$ or $B$ more than once, and no wall crosses both $B$ and $\m(v)$. Note that $A \subset \m(v)$, thus no wall crosses both $B$ and $A$. So no wall  crosses path $AB$ more than once.

Proposition~\ref{min} ensures that no wall crosses both $B$ and $\m(u)$. Finally, 
suppose for contradiction that some wall intersects both $A$ and $\m(u)$. Since $A \subset \m(v)$, then this wall intersects both $\m(v)$ and $\m(u)$, so it must contain a spanning geodesic between them, 
and so must intersect $P^u$. But this is a contradiction, since no wall intersects both $A$ and $P^u$, by Proposition~\ref{min}, since $A$ was chosen to be minimal between $p$ and $P^u$. Thus no wall intersects both  $AB$ and $\m(u)$.

\end{proof}

Recall that $D$ is the set of all elements of $\AG$ of reduced word length less than or equal to 4, and that for an action $\alpha$, $M_1^\alpha = \max_{d\in D}l_1^\alpha(d)$.

\begin{lemma}
\label{BC}
For any vertex $u$ of $\G$, $d_1(x_0, \m (u))\leq 3 M_1^\alpha$.
\end{lemma}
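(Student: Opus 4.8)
I need to bound $d_1(x_0, \m(u))$ by $3M_1^\alpha$ for any vertex $u \neq v$ (the case $u = v$ gives distance 0).

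The plan is to combine the decomposition from Lemma~\ref{AB} with the product structure of $\m(v)$. Since $x_0\in\m(v)$, Lemma~\ref{AB} lets me write a minimal edge path from $x_0$ to $\m(u)$ as a concatenation $A\cdot B$, where $A$ is minimal from $x_0$ to $P^u$ and $B$ is minimal between $\m(v)$ and $\m(u)$; hence
$$d_1(x_0,\m(u)) = d_1(x_0,P^u) + d_1(\m(v),\m(u)).$$
The second summand is at most $M_1^\alpha$ by Theorem~\ref{a1}, so it remains to show $d_1(x_0,P^u)\le 2M_1^\alpha$. Writing $\m(v)\cong T\times U$ and $P^u = T^u\times U^u$ (Lemma~\ref{P^u}), the $d_1$ metric on $\m(v)$ is the $\ell_1$ product metric, so $d_1(x_0,P^u)=d_T(x_{0,1},T^u)+d_U(x_{0,2},U^u)$, and it suffices to bound each factor distance by $M_1^\alpha$. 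By symmetry I treat only the tree coordinate $d_T(x_{0,1},T^u)$.

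Now recall how $x_{0,1}$ was chosen. If $\cap_{w\ne v}T^w\ne\emptyset$ then $x_{0,1}\in T^u$ and $d_T(x_{0,1},T^u)=0$, so assume instead that $x_{0,1}$ lies on the spanning geodesic (bridge) in $T$ between two disjoint subtrees $T^t,T^{t'}$. Here I would isolate a purely tree-theoretic claim (A): if a point $x$ lies on the bridge between disjoint subtrees $T_1,T_2$ of a tree, then for every subtree $T_3$,
$$d_T(x,T_3)\le\max\!\big(d_T(T_1,T_3),\,d_T(T_2,T_3)\big).$$
I would prove (A) by letting $m$ be the point of the bridge nearest $T_3$ and using $d_T(x,T_3)=d_T(x,m)+d_T(m,T_3)$, together with $d_T(T_i,T_3)=d_T(a_i,m)+d_T(m,T_3)$ (where $a_1,a_2$ are the bridge endpoints), so that the claim reduces to $d_T(x,m)\le\max(d_T(a_1,m),d_T(a_2,m))$, which is immediate since $x,m,a_1,a_2$ all lie on one geodesic. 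Applying (A) with $(T_1,T_2,T_3)=(T^t,T^{t'},T^u)$ gives $d_T(x_{0,1},T^u)\le\max(d_T(T^t,T^u),d_T(T^{t'},T^u))$.

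It then remains to establish claim (B): $d_T(T^{u_1},T^{u_2})\le M_1^\alpha$ for any two vertices. I expect this to follow from Theorem~\ref{a1} by comparing with the nearest-point projection $\pi$ of $X$ onto the convex set $\m(v)$: since the spanning geodesic feet are exactly nearest points, $\pi$ carries $\m(u_i)$ into $P^{u_i}$, and $\pi$ is $1$-Lipschitz for the CAT(0) metric $d$. Choosing $p\in\m(u_1),q\in\m(u_2)$ realizing $d(\m(u_1),\m(u_2))$, I get $d(P^{u_1},P^{u_2})\le d(\pi(p),\pi(q))\le d(p,q)$. Since the tree-factor distance is dominated by the CAT(0) distance on $P^{u_i}=T^{u_i}\times U^{u_i}$, and the CAT(0) distance is in turn dominated by $d_1$, I obtain $d_T(T^{u_1},T^{u_2})\le d_1(\m(u_1),\m(u_2))\le M_1^\alpha$. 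Combining (A) and (B) gives $d_T(x_{0,1},T^u)\le M_1^\alpha$; the identical argument bounds $d_U(x_{0,2},U^u)\le M_1^\alpha$, and assembling everything yields $d_1(x_0,\m(u))\le M_1^\alpha+2M_1^\alpha=3M_1^\alpha$.

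I expect the main obstacle to be claim (B)---specifically, justifying that the CAT(0) nearest-point projection onto $\m(v)$ really sends each $\m(u_i)$ into $P^{u_i}$ and carefully tracking the three comparisons $d_T\le d\le d_1$ between the tree, CAT(0), and edge-path metrics. A secondary subtlety is verifying claim (A) in the degenerate configurations (when $T_3$ meets the bridge, or when its nearest bridge point coincides with an endpoint $a_i$), though these only make the relevant distances smaller. An alternative to the projection argument for (B), should the metric comparisons prove awkward, is a direct wall-counting argument: every wall of $\m(v)$ separating the columns $T^{u_1}\times U$ and $T^{u_2}\times U$ extends to a wall of $X$ separating $\m(u_1)$ from $\m(u_2)$, so that $d_T(T^{u_1},T^{u_2})$ is bounded by the total width of walls separating the minsets, which is $d_1(\m(u_1),\m(u_2))\le M_1^\alpha$.
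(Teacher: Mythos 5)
Your overall skeleton coincides with the paper's: decompose $d_1(x_0,\m(u))$ via Lemma~\ref{AB}, bound $d_1(\m(v),\m(u))$ by Theorem~\ref{a1}, use the product structure $P^u = T^u\times U^u$ from Lemma~\ref{P^u}, and reduce to bounding the tree-coordinate distance using how $x_{0,1}$ was chosen. Your claim (A) is a correct mild generalization of the paper's corresponding step: the paper simply observes that $x_{0,1}$ separates $T^u$ from one of $T^t, T^{t'}$, so the geodesic between them passes through $x_{0,1}$ and $d_T(x_{0,1},T^u)\leq d_T(T^t,T^u)$ directly (no maximum needed); your proof of (A) is fine, and the degenerate configurations are harmless as you say.

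The genuine gap is in your primary argument for claim (B). It is not true that the CAT(0) nearest-point projection $\pi$ onto $\m(v)$ carries $\m(u_i)$ into $P^{u_i}$: when the minsets are disjoint, $P^{u_i}$ consists of the feet of \emph{spanning} geodesics, i.e., geodesics realizing the minimum distance between the two \emph{sets}, whereas $\pi(p)$ for $p\in\m(u_i)$ realizes only the distance from the single point $p$. Every foot of a spanning geodesic is a nearest point, but the converse fails: in $\E^2$ compare the $x$-axis with the ray $\{(s,s+1): s\geq 0\}$ --- the bridge has a single foot at the origin, while the projection of the ray is an entire half-axis. So $d(P^{u_1},P^{u_2})\leq d(\pi(p),\pi(q))$ does not follow, and your chain of inequalities collapses at its first link. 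Your fallback wall-counting argument is the right idea --- it is essentially the paper's proof --- but its key assertion, that every wall of $\m(v)$ separating the two columns also separates $\m(u_1)$ from $\m(u_2)$ in $X$, is exactly the point that needs proof rather than a given: a priori such a wall could also intersect $\m(u_1)$ or $\m(u_2)$ and then fail to separate them. The paper closes this by taking a minimal edge path $G$ from $P^{u_1}$ to $P^{u_2}$ and arguing, via Lemma~\ref{AB} and Proposition~\ref{min}, that a wall crossing $G$ and meeting $\m(u_i)$ would meet both $\m(v)$ and $\m(u_i)$, hence would contain a spanning geodesic between them and so meet $P^{u_i}$, contradicting the minimality of $G$; therefore every wall crossing $G$ separates $\m(u_1)$ from $\m(u_2)$, giving $d_1(P^{u_1},P^{u_2})\leq d_1(\m(u_1),\m(u_2))\leq M_1^\alpha$. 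With that argument supplied, your fallback completes the proof exactly as in the paper.
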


\begin{proof}

$x_0$ plays the role of $p$  in Lemma~\ref{AB}; we form a minimal edge path joining $x_0$ to $\m(u)$ by concatenating any minimal edge path $Q_1$ from $x_0$ to $P^u$ with a minimal edge path $Q_2$ from $\m(v)$ to $\m(u)$. 

By Lemma~\ref{AB}, $d_1(x_0, \m(u)) =l(Q_1)+l(Q_2)$.

By Theorem~\ref{a1}, $l(Q_2) =d_1 (\m(v), \m(u))\leq M_1^\alpha$. 

And since $P^u$ is a product $T^u \times U^u$, $l(Q_1)=d^T_1(x_{0,1},T^u)+d^U_1(x_{0,2},U^u)$, where $d^T_1$ is the edge path distance in $T$, and $d^U_1$ is the edge path distance in $U$.

Let $(t, t^\prime)$ be the pair chosen in the definition of $x_{0,1}$. Either $x_{0,1} \in T^u$, in which case $d^T_1(x_{0,1},T^u)=0$, or $T^u$ is a subset of a single component of $T \setminus \{x_{0,1}\}$. In this latter case, 
either  $T^t$ and $T^u$ lie in different components of $T \setminus \{x_{0,1}\}$, or $T^{t^\prime}$ and $T^u$ do (see Figure~\ref{fig:9}). 
\begin{figure}[htbp]
  \centering
  \includegraphics[width=3in]{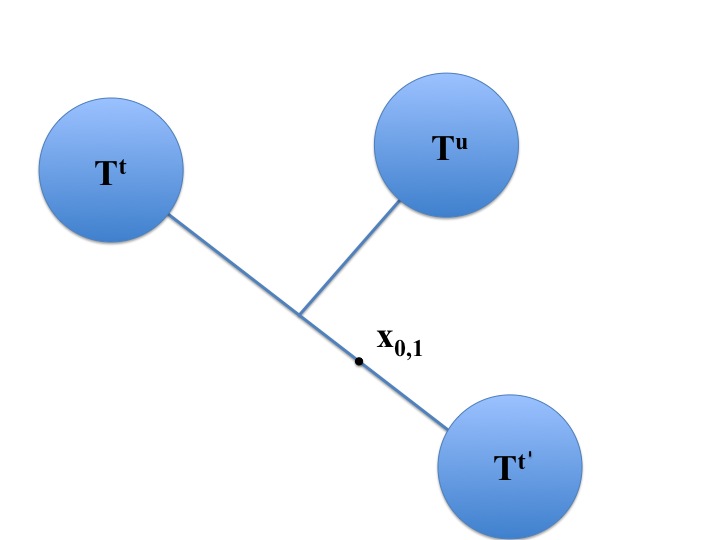}
  \caption{Illustration for Lemma~\ref{BC}, the tree $T$.}
  \label{fig:9}
\end{figure}  Suppose without loss of generality that  $T^t$ and $T^u$ lie in different components of $T \setminus \{x_{0,1}\}$. Then the minimal edge path from $T^u$ to $T^t$ passes through $x_{0,1}$, so $d_1(x_{0,1},T^u)\leq d_1(T^t,T^u)$.

Also, since $P^u$ and $P^t$ are products,  $d_1(T^t,T^u)\leq d_1(P^t, P^u)$.

Now we wish to show that $d_1(P^t,P^u) \leq d_1(\m(t), \m(u))$.
Let $G$ be a minimal edge path from  $P^u$ to $P^t$, with endpoints  $a \in P^u$ and $b\in P^t$ (see Figure~\ref{fig:10}). 
\begin{figure}[htbp]
  \centering
  \includegraphics[width=5in]{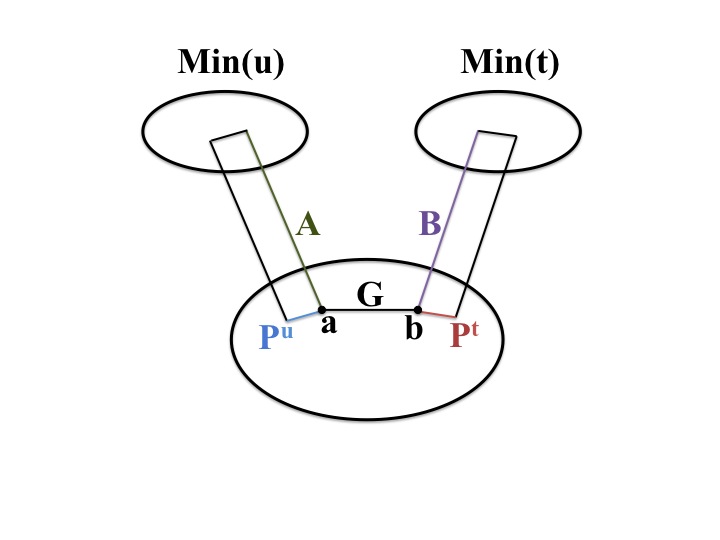}
  \caption{Illustration for Lemma~\ref{BC}, minsets $\m(v)$, $\m(u)$, and $\m(t)$.}
  \label{fig:10}
\end{figure}  
 Let $A$ be a minimal edge path from $\m(u)$ to $\m(v)$ with endpoint $a$, and let $B$ be a minimal edge path from $\m(v)$ to $\m(t)$ with starting point $b$. By Lemma~\ref{AB}, $G$ cannot cross any wall that either crosses $A$  or intersects $\m(u)$. Similarly, $G$ cannot cross any wall that either crosses $B$ or intersects $\m(t)$. Thus every wall that crosses $G$ separates $\m(u)$ from $\m(t)$.  This shows that $d_1(P^t,P^u) \leq d_1(\m(t), \m(u))$.

Thus we have that $d_1(x_{0,1},T^u)\leq d_1(T^t,T^u)  \leq d_1(P^t, P^u)\leq d_1(\m(t), \m(u))$. 

A similar argument shows that there exists a vertex $s$ of $\Gamma$ such that $d_1(x_{0,2},U^u)\leq d_1(\m(u), \m(s))$. 

So $l(Q_1)=d^T_1(x_{0,1},T^u)+d^U_1(x_{0,2},U^u)  \leq d_1(\m(u), \m(t))+d_1(\m(u), \m(s))$, for some vertices $s,t$ of $\Gamma$.

So we have 
\begin{align*}
d_1(x_0, \m(u))& =l(Q_1)+l(Q_2)\\
& \leq d_1(\m(u), \m(t))+d_1(\m(u), \m(s))+d_1 (\m(v), \m(u))\\
& \leq 3 M_1^\alpha.
\end{align*}
 
\end{proof}

Recall that for $g\in \AG$, $|g|$ is its reduced word length and $\|g\|$ is its cyclically reduced word length: the length of the shortest word in the generators of $\AG$ or their inverses that represents an element conjugate to $g$. 

\begin{proposition}
\label{keyA}
Let $\AG$ be a RAAG with defining graph $\G$ that is homogeneous of dimension 2. Then  for any minimal action $\alpha$ of $\AG$ on a 2-dimensional CAT(0)
rectangle complex, for any $g \in \AG$,
$l^\alpha_1(g) \leq 7M_1^\alpha \|g\|$.
\end{proposition}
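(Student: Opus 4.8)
The plan is to bound $l^\alpha_1(g)$ by the edge-path displacement $d_1(x_0, g.x_0)$ of the distinguished basepoint $x_0$, and then to telescope this displacement into contributions coming one generator at a time, each controlled by Lemma~\ref{BC}. First I would pass to a convenient representative: by the definition of $\|g\|$ there is a conjugate $h$ of $g$ and a word $h = s_1 s_2 \cdots s_m$ with $m = \|g\|$, where each $s_i$ is a generator or the inverse of a generator. Since the action is by simplicial isometries, $d_1$ is $\AG$-invariant, and since $\m(hgh^{-1}) = h.\m(g)$, the function $l^\alpha_1$ is a conjugacy invariant; hence $l^\alpha_1(g) = l^\alpha_1(h)$ and it suffices to bound $l^\alpha_1(h)$.

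Granting the central inequality $l^\alpha_1(h) \le d_1(x_0, h.x_0)$ (discussed below), I would insert the intermediate vertices $s_1\cdots s_i.x_0$ and apply the triangle inequality for $d_1$ to get $d_1(x_0, h.x_0) \le \sum_{i=1}^m d_1(s_1\cdots s_{i-1}.x_0,\, s_1\cdots s_i.x_0) = \sum_{i=1}^m d_1(x_0, s_i.x_0)$, the last equality by $\AG$-invariance of $d_1$. Each letter $s_i$ equals $v_i^{\pm 1}$ for a vertex $v_i$ of $\G$, so $\m(s_i) = \m(v_i)$ and $l^\alpha_1(s_i) = l^\alpha_1(v_i)$. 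Choosing a vertex $p_i$ of the subcomplex $\m(v_i)$ (a subcomplex by Corollary~\ref{m(v) subcomplex}) that realizes $d_1(x_0, \m(v_i))$, a three-term triangle inequality through $p_i$ and $s_i.p_i$, together with $p_i \in \m(v_i) = \m(s_i)$ so that $d_1(p_i, s_i.p_i) = l^\alpha_1(v_i)$, yields $d_1(x_0, s_i.x_0) \le 2\,d_1(x_0, \m(v_i)) + l^\alpha_1(v_i)$. Now Lemma~\ref{BC} gives $d_1(x_0, \m(v_i)) \le 3 M_1^\alpha$, and $v_i \in D$ gives $l^\alpha_1(v_i) \le M_1^\alpha$, so each summand is at most $7 M_1^\alpha$. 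Summing over the $m = \|g\|$ letters completes the bound $l^\alpha_1(g) = l^\alpha_1(h) \le 7 M_1^\alpha \|g\|$; note that the constant $7 = 2\cdot 3 + 1$ is forced precisely by these two inputs.

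The main obstacle is the inequality $l^\alpha_1(h) \le d_1(x_0, h.x_0)$, because $l^\alpha_1$ is defined via type $1$ axes through $\m(h)$ rather than as an infimum of displacements over all points. I would prove it by a limiting argument. Fix $m_0 \in \m(h)$; since $m_0$ lies on a type $1$ axis for $h$ that is geodesic and $h$-invariant, the subpath of that axis from $m_0$ to $h^k.m_0$ has length exactly $k\, l^\alpha_1(h)$, and being geodesic it is $d_1$-minimal, so $d_1(m_0, h^k.m_0) = k\, l^\alpha_1(h)$. On the other hand, $d_1(m_0, h^k.m_0) \le 2\,d_1(m_0, x_0) + d_1(x_0, h^k.x_0) \le 2\,d_1(m_0, x_0) + k\, d_1(x_0, h.x_0)$, using $\AG$-invariance of $d_1$ and the telescoped triangle inequality along the $h$-orbit of $x_0$. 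Combining the two, dividing by $k$, and letting $k \to \infty$ gives $l^\alpha_1(h) \le d_1(x_0, h.x_0)$. The only subtlety to verify carefully is that $d_1$ restricted to a type $1$ axis realizes the genuine $d_1$-distance between its points; this is exactly the content of a type $1$ axis being geodesic, as recorded in Lemma~\ref{edge path axes} and the surrounding discussion.
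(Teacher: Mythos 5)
Your proposal is correct and follows essentially the same route as the paper: conjugate to a cyclically reduced word, bound $l_1^\alpha$ by the displacement $d_1(x_0, g'.x_0)$ of the basepoint, telescope letter by letter, and bound each term by $l_1^\alpha(g_i) + 2\,d_1(x_0, \m(g_i)) \leq M_1^\alpha + 6M_1^\alpha$ via Lemma~\ref{BC}, giving the constant $7$. The only difference is that you supply an explicit limiting argument (comparing $d_1(m_0, h^k.m_0) = k\,l_1^\alpha(h)$ along a type~1 axis with the telescoped orbit of $x_0$) for the inequality $l_1^\alpha(g') \leq d_1(x_0, g'.x_0)$, which the paper asserts without justification -- a worthwhile addition, since $l_1^\alpha$ is defined via axes rather than as an infimum of displacements.
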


\begin{proof}
Let $g \in \AG$. Let $g^\prime$ be a cyclically reduced element in the conjugacy class of $g$, so $\|g\|=\|g^\prime\|=|g^\prime|$. Letting $n=\|g\|$ and writing $g^\prime=g_1g_2\dots g_n$, where the $g_i$ are generators or inverses of generators, we have
\begin{align*}
l^\alpha_1(g)& =l^\alpha_1(g^\prime) \leq d_1(x_0, g^\prime.x_0)\\
& \leq d_1(x_0, g_1.x_0) + d_1 (g_1.x_0, g_1g_2.x_0) + \dots + d_1(g_1g_2\dots g_{n-1}.x_0, g_1g_2\dots g_{n-1}g_n.x_0)\\
&= \sum_{i=1}^{n}d_1(x_0, g_i.x_0)\\
& \leq \sum_{i=1}^{n}l^\alpha_1(g_i) + 2\sum_{i=1}^{n}d_1(x_0, \m(g_i))\\
& \leq nM_1^\alpha +2n\cdot3M_1^\alpha \\
& = 7nM_1^\alpha \\
& =7M_1^\alpha \|g\|
\end{align*}
\end{proof}

Proposition~\ref{keyA} concerns $l_1$ translation lengths; we now translate this proposition into an analogous proposition concerning CAT(0) translation lengths. Similar to $M_1^\alpha$, we define
$$  M^\alpha =\max_{d\in D}l^\alpha(d),$$
where $D$  is the set of all elements of $\AG $ of reduced word length less than or equal to 4.

\begin{proposition}
\label{key}
Let $\AG$ be a RAAG with defining graph $\G$ that is homogeneous of dimension 2. Then there exists a constant $C$ such that for any minimal action $\alpha$ of $\AG$ on a 2-dimensional CAT(0)
rectangle complex, for any $g \in \AG$,
$l^\alpha(g) \leq CM^\alpha \|g\|$.
\end{proposition}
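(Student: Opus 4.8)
The plan is to deduce Proposition~\ref{key} from Proposition~\ref{keyA} by comparing the CAT(0) metric $d$ on $X$ with the edge-path metric $d_1$. The engine is a purely metric observation about $2$-dimensional rectangle complexes of finite shapes: the two metrics are bi-Lipschitz equivalent, with
\[
d(x,y)\le d_1(x,y)\le \sqrt{2}\,d(x,y)
\]
for all points $x,y\in X$. First I would establish this. The left inequality is immediate: a type $1$ path is a concatenation of axis-parallel segments, on each of which the Euclidean and $\ell_1$ lengths agree, so its $\ell_1$-length equals its $\ell_2$-length; taking the infimum over the (sub)class of type $1$ paths can only increase the value, so $d\le d_1$. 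For the right inequality I would take the CAT(0) geodesic $[x,y]$, which (by finite shapes) meets only finitely many cells and is a straight segment inside each. Within a cell $\prod[0,r_i]$ a segment with coordinate displacements $(\delta_1,\delta_2)$ can be replaced by a staircase whose $\ell_1$-length is $|\delta_1|+|\delta_2|\le\sqrt{2}\sqrt{\delta_1^2+\delta_2^2}$ by Cauchy--Schwarz; concatenating these staircases produces a type $1$ path from $x$ to $y$ of $\ell_1$-length at most $\sqrt{2}\,\ell_2([x,y])=\sqrt{2}\,d(x,y)$, whence $d_1(x,y)\le\sqrt{2}\,d(x,y)$.

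Next I would transfer this comparison to translation lengths. Since $\alpha$ is semisimple and every nonidentity element acts hyperbolically, the CAT(0) minset $\m(g)$ is nonempty, and by the discussion preceding Lemma~\ref{edge path axes} every point of $\m(g)$ lies on a type $1$ axis for $g$. So for any $x\in\m(g)$ we have simultaneously $d(x,g.x)=l^\alpha(g)$ and $d_1(x,g.x)=l^\alpha_1(g)$. Feeding $x$ into the bi-Lipschitz bound gives, for every $g\in\AG$,
\[
l^\alpha(g)\le l^\alpha_1(g)\le \sqrt{2}\,l^\alpha(g).
\]
Applying the second inequality to each $d\in D$ yields $M_1^\alpha=\max_{d\in D}l_1^\alpha(d)\le\sqrt{2}\,\max_{d\in D}l^\alpha(d)=\sqrt{2}\,M^\alpha$.

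Finally I would chain these estimates with Proposition~\ref{keyA}: for any $g\in\AG$,
\[
l^\alpha(g)\le l^\alpha_1(g)\le 7M_1^\alpha\|g\|\le 7\sqrt{2}\,M^\alpha\|g\|,
\]
so the proposition holds with $C=7\sqrt{2}$, a constant depending only on the dimension and hence uniform over all minimal actions $\alpha$. I do not expect a serious obstacle here; the argument is a routine dimension-$2$ metric comparison once the two defining facts are in place. The only points requiring care are ensuring the CAT(0) geodesic meets finitely many cells so the per-cell Cauchy--Schwarz bounds sum to a global bound, and invoking the already-established fact that $l^\alpha_1(g)$ is realized as $d_1(x,g.x)$ at a point of the \emph{CAT(0)} minset $\m(g)$, which is what lets the same base point realize both length functions.
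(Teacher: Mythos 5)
Your proposal is correct and takes essentially the same route as the paper's own proof: both deduce the result from Proposition~\ref{keyA} via the two-sided comparison $l^\alpha(h)\le l_1^\alpha(h)\le \sqrt{2}\,l^\alpha(h)$, realized by evaluating $d$ and $d_1$ at the same point $x\in\m(h)$ (which lies on a type 1 axis), and both arrive at the same constant $C=7\sqrt{2}$. The only difference is cosmetic: you spell out the staircase/Cauchy--Schwarz justification of $d_1\le\sqrt{2}\,d$, which the paper dismisses as clear from $2$-dimensionality.
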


\begin{proof}
This follows (with $C=7\sqrt2$) from Proposition~\ref{keyA} and the fact that for any element $h\in \AG$, $l^\alpha(h)\leq l_1^\alpha(h)\leq \sqrt2 l^\alpha$. The first inequality here is clear from the definitions. For the second inequality, recall that if $x$ is any point of $\m(h)$, then  $l_1^\alpha(h)=d_1(x,h.x)$ while $l^\alpha(h)=d(x, h.x)$; and it is clear from the fact that the rectangle complex is 2-dimensional that $d_1(x,h.x)\leq \sqrt 2d(x, h.x).$ 

\end{proof}

Let $\Omega$ be the set of conjugacy classes of $\AG$. Any translation length function $l^\alpha$ is constant on conjugacy classes, so we can view $l^\alpha$ as a function $l^\alpha:\Omega\rightarrow \mathbb{R}$, so $l^\alpha \in \mathbb{R}^\Omega$. 

Let $\Gamma$ be homogeneous of dimension 2. Let $\AG$ act properly, semi-simply by isometries on a 2-dimensional CAT(0) rectangle complex $X$. By definition, the action is minimal if $X$ is the union of the minsets of the $\Z^2$-subgroups of $\AG$. So if an action by $\AG$ is minimal, by definition it is proper, and every $e\neq g \in \AG$ acts as a semisimple isometry. Note that if $\alpha$ is a proper, semisimple action by isometries, then $l^\alpha \neq 0$, because if $l^\alpha= 0$ then every $g\in \AG$ fixes a point (since every $g$ is semi-simple), but then every $g=e$, since point stabilizers are finite and $\AG$ is torsion-free. So the set of translation length functions of minimal actions of a given RAAG on 2-dimensional CAT(0) rectangle complexes lies in $\R^\infty \setminus 0$.

\begin{theorem}
\label{main}
Let $\AG$ be a RAAG with defining graph $\G$ that is homogeneous of dimension 2. The space of projectivized translation length functions of minimal actions of $\AG$ on 2-dimensional CAT(0) rectangle complexes lies within a compact subspace of $P^\Omega$.
\end{theorem}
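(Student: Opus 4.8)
The plan is to deduce the theorem from Proposition~\ref{key} by the same normalization-and-Tychonoff argument that Culler and Morgan used for free groups. Throughout I give $\R^\Omega$ the product (pointwise-convergence) topology, under which the projection $\pi:\R^\Omega\setminus 0\to P^\Omega$ is continuous, and I regard the image of a minimal action $\alpha$ as the projective class $[l^\alpha]=\pi(l^\alpha)$.

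First I would normalize. As noted just before the statement, any minimal action has $l^\alpha\neq 0$; in fact, since $D$ contains every generator and every nonidentity element of $\AG$ acts as a hyperbolic isometry, $M^\alpha=\max_{d\in D}l^\alpha(d)>0$. Hence the rescaled function $\tilde l^\alpha:=l^\alpha/M^\alpha$ is well defined and represents the same point of $P^\Omega$ as $l^\alpha$. Because both $\|\cdot\|$ and $l^\alpha$ are constant on conjugacy classes, $\tilde l^\alpha$ is a genuine element of $\R^\Omega$.

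Next I would box in these normalized functions. Proposition~\ref{key} gives $l^\alpha(g)\le CM^\alpha\|g\|$ for every $g$, so $0\le \tilde l^\alpha(g)\le C\|g\|$ for every conjugacy class $g\in\Omega$. Thus every $\tilde l^\alpha$ lies in the product $K=\prod_{g\in\Omega}[0,C\|g\|]$, which is compact by Tychonoff's theorem. Moreover, by construction $\max_{d\in D}\tilde l^\alpha(d)=1$, so each $\tilde l^\alpha$ lies in the subset $K'=\{f\in K:\max_{d\in D}f(d)=1\}$. Since $D$ is finite, $f\mapsto \max_{d\in D}f(d)$ is continuous on $K$, so $K'$ is a closed subset of the compact set $K$ and hence compact; and $K'$ avoids the zero vector, so $K'\subset\R^\Omega\setminus 0$.

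Finally, $\pi(K')$ is a compact subset of $P^\Omega$ (the continuous image of a compact set), and the image of every minimal action satisfies $[l^\alpha]=[\tilde l^\alpha]=\pi(\tilde l^\alpha)\in\pi(K')$. Therefore the entire image lies in the compact set $\pi(K')$, proving the theorem. I expect the only delicate points to be bookkeeping ones: checking that the chosen normalization keeps us off the zero vector (so that projectivization is actually defined on $K'$) and confirming that the product topology is indeed the topology in which compactness is sought; the compactness itself is immediate from Tychonoff once Proposition~\ref{key} supplies the coordinatewise bounds.
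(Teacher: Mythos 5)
Your proposal is correct and takes essentially the same approach as the paper: normalize $l^\alpha$ by a multiple of $M^\alpha$ using Proposition~\ref{key}, trap the normalized functions in a compact subset of $\R^\Omega \setminus 0$ via Tychonoff, and push forward under the continuous projectivization $\pi$. The only cosmetic difference is that you carve out the compact set as the closed slice $\{f : \max_{d\in D} f(d)=1\}$ of a single product box, while the paper uses the finite union of boxes $\left[\frac{1}{C},\|d\|\right]_{[d]} \times \prod_{[g]\neq[d]}\left[0,\|g\|\right]_{[g]}$ indexed by conjugacy classes meeting $D$; both devices keep the set away from the zero vector and yield the same conclusion.
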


\begin{proof} 
Let $\alpha$ be a minimal action of $\AG$ on a 2-dimensional CAT(0) rectangle complex, and let $l^\alpha \in \mathbb{R}^\Omega$ be its translation length function. By Proposition~\ref{key}, for any $g \in \AG$, $l^\alpha(g) \leq CM^\alpha \|g\|$. Define ${l^\alpha}^\prime$ to be a scalar multiple of $l^\alpha$, ${l^\alpha}^\prime(g)=\dfrac{l^\alpha(g)}{CM^\alpha}$ (note that $M^\alpha \neq 0$ since $l^\alpha \neq 0$ and $l^\alpha(g) \leq CM^\alpha\|g\|$). Then (1) for any $g \in \AG$, ${l^\alpha}^\prime(g) \leq \|g\|$, and (2) there exists $d^* \in D$ such that ${l^\alpha}^\prime(d^*)\geq\dfrac{1}{C}$. (1) is clear, and (2) is true because by definition, there is a $d^*\in D$ such that $M^\alpha=l^\alpha(d^*)$, so that $d^*$ satisfies ${l^\alpha}^\prime(d^*)=\dfrac{{l^\alpha}(d^*)}{CM^\alpha}=\dfrac{1}{C}$.

Let $\Omega^D$ be the set of conjugacy classes $[d]$ that have a representative $d$ in $D$. Let $K$ be the subset of $ \mathbb{R}^\Omega \setminus 0$ defined by $K= \bigcup_{[d]\in \Omega^D} \left(\left[\frac{1}{C},\|d\| \right]_{[d]} \times \prod_{[g]\neq [d]}\left[0,\|g\| \right]_{[g]} \right)$. $K$ is compact since $\Omega^D$ is finite. Let $K^\prime$ be the image of $K$ in $\mathbb{P}^\Omega$. $K^\prime$ is compact because it is the image of a compact set under the projection map $\mathbb{R}^\Omega \setminus 0 \rightarrow \mathbb{P}^\Omega$. (1) and (2) show that ${l^\alpha}^\prime \in K$, so the image of $l^\alpha$ under projectivization (which is the same as the image of ${l^\alpha}^\prime$ under projectivization) lies in $K^\prime$. $K^\prime$ is independent of $\alpha$, so this shows that the set of projectivized translation length functions arising from minimal actions of $\AG$ on CAT(0) cubical complexes lies in a compact subspace of $\mathbb{P}^\Omega$.
\end{proof}

\section{Future Directions}

Recall that Theorem~\ref{main} is a statement about the set of minimal actions by a RAAG $\AG$ whose defining graph $\G$ is homogeneous of dimension 2. This condition on $\G$ ensures that $\AG$ acts minimally on 2-dimensional CAT(0) rectangle complexes. The natural analogue of this condition in higher dimensions is the requirement that $\G$ be homogeneous of dimension $n$; in this case $\AG$  acts minimally on $n$-dimensional CAT(0) rectangle complexes.

Recall that the proof of Theorem~\ref{main} depended on a reduction to arguments about $d_1$ distances. The ideas developed in the 2-dimensional case should prove useful in higher dimensions, where arguments concerning edge paths should be much easier than arguments concerning geodesics. It should be possible to use my techniques to prove analogues of both of Culler and Morgan's theorems for any RAAG $\AG$ whose defining graph is homogeneous of dimension $n$. That is, it should be possible to prove that the map from the set of minimal actions of $\AG$ on $n$-dimensional CAT(0) rectangle complexes to $\bbP^\infty$ is injective, and that the image of this map has compact closure in $\bbP^\infty$.

An analogue of Outer Space for a RAAG $\AG$ should have points that are minimal actions on CAT(0) rectangle complexes. Theorem~\ref{main} says that the closure of such a space in $\bbP^\infty$ is compact. It will be interesting to investigate the nature of the boundary points. We conjecture that, as in the free group case, these boundary points correspond to some degeneration of the actions. Can we describe the boundary points as actions on metric spaces?
If so, what kinds of metric spaces are involved? In what ways are they generalizations of CAT(0) rectangle complexes? In what ways can distances shrink or lengthen as the boundary is approached? 

In the case of a free group, actions on simplicial trees degenerate in the boundary into actions on $\R$-trees. Cohen and Lustig \cite{CL} defined the geometric notion of a very small action of a free group on an $\R$-tree, and they and Bestvina and Feighn \cite{CL,BF} showed that the compactification of Outer Space consists of all very small actions. In the case of a 2-dimensional RAAG, actions on CAT(0) rectangle complexes might degenerate in the boundary into actions on measured wall spaces, the natural higher-dimensional analogues of $\R$-trees.

\end{document}